\theoremstyle{definition}
\newtheorem{definition}{Definition}
\newtheorem{example}{Example}
\newtheorem{notation}{Notation}
\newtheorem{move}{Move}
\theoremstyle{theorem}
\newtheorem{theorem}{Theorem}
\newtheorem{proposition}{Proposition}
\newtheorem{lemma}{Lemma}
\newtheorem{question}{Question}
\newtheorem{corollary}{Corollary}
\newtheorem{fact}{Fact}
\newtheorem{observe}{Observation}
\theoremstyle{remark}
\newtheorem{remark}{Remark}
\newcommand{\ri}{\operatorname{RI}}
\newcommand{\kk}{\operatorname{knot}}
\title{Crosscap number and knot projections}
\author{Noboru Ito}
\address{Graduate School of Mathematical Sciences, The University of Tokyo, 3-8-1, Komaba, Meguro-ku, Tokyo, 153-8914, Japan}
\email{noboru@ms.u-tokyo.ac.jp}
\author{Yusuke Takimura}
\address{Gakushuin Boys' Junior High School, 1-5-1 Mejiro, Toshima-ku, Tokyo, 171-0031, Japan}
\email{Yusuke.Takimura@gakushuin.ac.jp}
\date{September 27, 2018}
\thanks{MSC 2010: 57M25}
\keywords{crosscap number (non-orientable genus), alternating knot, knot projection, spanning surfaces}
\begin{document}
\begin{abstract}
We introduce an unknotting-type number of knot projections that gives an upper bound of the crosscap number of knots.  
We determine the set of knot projections with the unknotting-type number at most two, and this result implies classical and new results that determine the set of alternating knots with the crosscap number at most two.    
\end{abstract}
\maketitle

\section{Introduction}
In this paper, we introduce an unknotting-type number of knot projections (Definition~\ref{def1})  as follows.  
Every double point in a knot projection can be spliced two different ways (Figure~\ref{f2}), one of which gives another knot projection (Definition~\ref{dfn1}).    
A special case of such operations is a first Reidemeister move $\ri^-$, as shown in Figure~\ref{f2}.  If the other case of such operations, which is not of type $\ri^-$, it is denoted by $S^-$.  
Beginning with an $n$-crossing knot projection $P$, there are many sequences of $n$ splices of type $\ri^-$ and type $S^-$, all of which end with the simple closed curve $O$.    
Then, 
we define the number $u^-(P)$ as the minimum number of splices of type $S^-$ (Definition~\ref{dfn3}).  For this number, we determine the set of knot projections with $u^-(P)=1$ or $u^-(P)=2$ (Theorem~\ref{thm1}, Section~\ref{sec3}).   Here, we  provide an efficient method to obtain a knot projection $P$ with $u^- (P)$ $=$ $n$ for a given $n$ (Move~\ref{lemma2}).  
Further, for a connected sum (Definition~\ref{dfn_connected}) of knot projections, we show that  the additivity of $u^-$ under the connected sum (Section~\ref{sec7}).     
Thus, to calculate $u^-(P)$ for every knot projection $P$, it is sufficient to compute $u^-(P)$ for every prime knot projection $P$.  Here, a knot projection is called a \emph{prime} (Definition~\ref{dfn_connected}) knot projection if the knot projection is not the simple closed curve and is not a connected sum of two knot projections, each of which is not the simple closed curve.  

We apply this unknotting-type number to the theory of crosscap numbers (Section\ref{sec4}--Section~\ref{sec6}).  Let $P$ be a knot projection and $D_P$ a knot diagram by adding any over/under information to each double point of $P$.  Let $K(D_P)$ be a knot type (Definition~\ref{def1}) where $D_P$ is a representative of the knot type.   In particular, if $D_P$ is an alternating knot diagram (Definition~\ref{def1}), we  denote $K(D_P)$ by $K^{alt}(P)$ simply.   In this paper, in general, we show that the unknotting-type number $u^-(P)$ gives an upper bound of the crosscap number of knots (Definition~\ref{dfn_cross}), i.e., $C(K(D_P)) \le u^-(P)$ (Theorem~\ref{thm2}, Section~\ref{sec4}).    As a special case if $K(D_P)=K^{alt}(P)$, as a corollary of the inequality, we are easily able to determine the set of alternating knots with $C(K)=1$ (corresponding to a classical result in Section~\ref{sec5}) or $C(K)=2$ (corresponding to a new result in Section~\ref{sec6}).   Similarly, by using type $S^+$ ($\ri^+$,~resp.) that is the inverse operation of type $S^-$ ($\ri^-$,~resp.), we also introduce $u(P)$ that is the minimum number of operations of types $S^{\pm}$ in a sequence, from $P$ to $O$, consisting of operations of types $S^{\pm}$ and $\ri^{\pm}$.  
These studies are motivated by Observation~\ref{ob1}, where we use crosscap numbers in the table of KnotInfo \cite{CL} and a table of knot projections up to eight double points \cite{IT8c}.    
\begin{observe}\label{ob1}
For every prime knot projection $P$ with less than nine double points,   
$C(K^{alt} (P))$ $=$ $u(P)$ $=$ $u^-(P)$.  
\end{observe}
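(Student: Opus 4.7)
The plan is to verify Observation~\ref{ob1} by case-by-case enumeration over the finitely many prime knot projections with at most eight double points, as tabulated in \cite{IT8c}.  Two of the three required inequalities are available a priori: $u(P) \le u^-(P)$ is immediate, since the sequences counted by $u$ form a strictly larger family than those counted by $u^-$, and $C(K^{alt}(P)) \le u^-(P)$ is Theorem~\ref{thm2} specialized to the alternating diagram $D_P$.  Consequently the task reduces to showing, for each prime $P$ in the table, the two remaining inequalities $u^-(P) \le C(K^{alt}(P))$ and $C(K^{alt}(P)) \le u(P)$; once these are in hand, all three quantities collapse to a common value.

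For the upper bound $u^-(P) \le C(K^{alt}(P))$, the approach is constructive: for each $P$ in the table one exhibits an explicit sequence of splices from $P$ to $O$ whose $S^-$-count matches the tabulated crosscap number of $K^{alt}(P)$ read off from \cite{CL}.  Projections that are built up by the recipe of Move~\ref{lemma2}, or that arise as small perturbations of standard alternating $(2,p)$-torus projections, should admit uniform templates; likewise the additivity of $u^-$ under connected sum established in Section~\ref{sec7} lets one restrict genuine case work to those prime projections that are not decomposable any further.  For the lower bound $C(K^{alt}(P)) \le u(P)$, the cleanest route is to check that the spanning-surface construction underlying Theorem~\ref{thm2} is insensitive to the sign of the splice: each $S^+$ move is the inverse of an $S^-$ move and contributes a dual band to the candidate spanning surface, so an essentially symmetric Euler-characteristic count should yield $C(K(D_P)) \le u(P)$ in general and in particular $C(K^{alt}(P)) \le u(P)$.

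The main obstacle is the volume of case work: although the list of projections with up to eight double points is finite, it is large, and producing explicit certificate sequences for each projection requires careful bookkeeping together with a systematic way to recognize which projections fall under the templates supplied by Move~\ref{lemma2} and by connected-sum additivity, versus those that require ad hoc splice sequences.  A secondary difficulty is confirming that the symmetric extension of Theorem~\ref{thm2} indeed bounds $C(K(D_P))$ from above by $u(P)$; if that extension fails in some subtle configuration involving a mix of $S^+$ and $S^-$ moves, the $C(K^{alt}(P)) \le u(P)$ half of the observation would instead need an independent obstruction preventing any shorter $S^{\pm}$-sequence, which would lengthen the verification considerably.
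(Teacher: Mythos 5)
Your reduction is set up correctly ($u(P)\le u^-(P)$ and $C(K^{alt}(P))\le u^-(P)$ by Theorem~\ref{thm2}, so it suffices to check $u^-(P)\le C(K^{alt}(P))$ and $C(K^{alt}(P))\le u(P)$ case by case), and the first half of your plan matches what the paper actually does: Observation~\ref{ob1} is not proved by a general argument but verified empirically, comparing crosscap numbers from KnotInfo \cite{CL} with the finite table of prime projections up to eight double points \cite{IT8c}, with explicit splice sequences (built via Move~\ref{lemma2}) certifying $u^-(P)\le C(K^{alt}(P))$. (A side remark: since the Observation only concerns prime projections, the connected-sum additivity of $u^-$ plays no role here.)

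The genuine gap is in your second half. The ``symmetric Euler-characteristic count'' you invoke to get $C(K(D_P))\le u(P)$ is not a routine extension of Theorem~\ref{thm2}: it is precisely the open question the authors pose in Section~\ref{sec7}, and the paper explicitly states that whether $C(K)\le u(K)$ holds is unknown. The construction in Theorem~\ref{thm2} works because a monotone sequence of $S^-$ and $\ri^-$ splices assigns one splice to each double point of $P$, hence determines a state of $D_P$ and a state surface whose Euler characteristic can be computed from the $S^-$-count. Once $S^+$ or $\ri^+$ moves are allowed, the intermediate projections acquire double points that are not crossings of $D_P$, the sequence no longer corresponds to any state of $D_P$, and there is no band-attachment bookkeeping that yields a spanning surface for $K(D_P)$; so your proposed ``dual band'' argument does not go through. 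Consequently your plan has no mechanism for bounding $u(P)$ from below. For projections with $u^-(P)\le 2$ the paper's Propositions~\ref{proposition1} and \ref{proposition2} do give $u(P)=u^-(P)$ by repeating the argument of Theorem~\ref{thm1}, but the table below nine double points contains projections with $u^-(P)\ge 3$ (e.g.\ $\widehat{7_4}$, with $u^-=3$), and for these you would need an independent lower bound on $u(P)$ — exactly the ``independent obstruction'' you mention as a fallback but do not supply. The caution is warranted: the paper's Example in Section~\ref{sec7} shows that for composite projections $u$ can be strictly smaller than $u^-$, so the equality $u(P)=u^-(P)$ in the Observation is a nontrivial fact about these particular prime projections and must be verified, not derived from a symmetry of Theorem~\ref{thm2}.
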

In Section~\ref{sec7}, for a connected sum $P \sharp P'$, we give examples of $u(P \sharp P')$ and $u^- (P \sharp P')$ satisfying $u(P \sharp P')$ $<$ $u^- (P \sharp P')$.   We also obtain a question whether every knot projection $P$ holds $C(K(D_P))$ $\le u(P)$.  

Finally, we would like to mention that crosscap numbers of knots are discussed in the literature.  Clark obtained that for a knot $K$, $C(K)=1$ if and only if $K$ is a $2$-cable knot (in particular, for an alternating knot $K$, $K$ is a $(2, p)$-torus knot) \cite{clark}.  Clark also obtained an upper bound $C(K) \le 2 g(K)+1$, where $g(K)$ is the orientable genus of $K$ (this inequality holds for every knot $K$) \cite{clark}.   Murakami and Yasuhara \cite{MY} gave the example $C(K)=2g(K)+1$ by $K = 7_4$ and sharp bounds $C(K) \le \lfloor n(K)/2 \rfloor$ for the minimum crossing number $n(K)$ of a knot $K$ (note that, as we mention in the following, Hatcher-Thurston \cite{HaT} includes the particular case $7_4$, which is discussed \emph{explicitly} in Hirasawa-Teragaito \cite{HT}).  Murakami and Yasuhara \cite{MY} also gave the necessary and sufficient condition for the crosscap number to be additive under the connected sum.   
Historically, the orientable knot genus has been well studied, and a general algorithm for computations is known.  For low crossing number knots, effective calculations are made from genus bounds using invariants such as the Alexander polynomial and the Heegaared Floer homology.  

However, crosscap numbers are harder to compute.   In this situation, crosscap numbers of several  families are known by Teragaito (torus knots) \cite{Tra}, Hatcher-Thurston ($2$-bridge knots, in theory), Hirasawa-Teragaito ($2$-bridge knots, explicitly) \cite{HT}, Ichihara-Mizushima (\emph{many} pretzel knots) \cite{IM}.   
Adams and Kindred \cite{AK} determine the crosscap number of any alternating knot in theory.  For a given $n$ crossing alternating knot diagram, consider $2^n - 1$ non-orientable state surfaces (Definition~\ref{state}); some of these surfaces achieve the crosscap number of the knot.   
By using coefficients of the colored Jones polynomials to establish two sided bounds on crosscap numbers, Kalfagianni and Lee \cite{KL} improve the efficiency of these computations.  They apply this improved efficiency in order to calculate hundreds of crosscap numbers explicitly and rapidly.    
 
In this paper, we relate our unknotting-type number $u^-(P)$ of a knot projection $P$ to methods of Adams-Kindered \cite{AK} that seems at a glance to be distinct from giving our number $u^-(P)$.    We also study crosscap numbers from a different viewpoint to obtain a state surface for an alternating knot using our unknotting-type number $u^-(P)$ of a knot projection $P$, and determine the set of alternating knots with the crosscap number two.

\section{Preliminaries}
\begin{definition}[knot, knot diagram, knot projection, alternating knot]\label{def1}
A \emph{knot} is an embedding from a circle to $\mathbb{R}^3$.     We say that knots $K$ and $K'$ are \emph{equivalent} if there is a homeomorphism of $\mathbb{R}^3$ onto itself which maps $K$ onto $K'$.  Then, each equivalence class of knots is called a \emph{knot type}.  
A \emph{knot projection} is an image of a generic immersion from a circle into $S^2$ where every singularity is a transverse double point.     
In this paper, a transverse double point of a knot projection is simply called a \emph{double point}.   
The \emph{simple closed curve} is a knot projection with no double points.  
A \emph{knot diagram} is a knot projection with over/under information for every double point.   
Throughout this paper, in general, a knot projection (knot diagram, resp.) is defined not to be distinct from its mirror image.
A double point with over/under information of a knot diagram is called a \emph{crossing}.  As a special case, for a knot projection, one can arrange crossings in such a way that an under-path and an over-path alternate when traveling along the knot projection.   Then,  the knot diagram is called an \emph{alternating knot diagram}.  For a knot $K$, if a knot diagram of $K$ is an alternating knot diagram, then $K$ is called an \emph{alternating knot}.  
\end{definition}
\begin{definition}[splices, operations of type $S^-$ or type $\ri^-$, Seifert splice]\label{dfn1}
For each double point, there are two ways to smooth the knot projection near the double point (Figure~\ref{f1}~(a) $N(d)$).  Namely, erase the transversal intersection of the knot projection within a small neighborhood of the double point and connect the four resulting ends by a pair of simple, nonintersecting arcs (Figure~\ref{f1}~(b) or (c) $N'(d)$).       
A replacement of $N(d)$ with $N'(d)$ is called a \emph{splice}.  
\begin{figure}[h!]
\includegraphics[width=6cm]{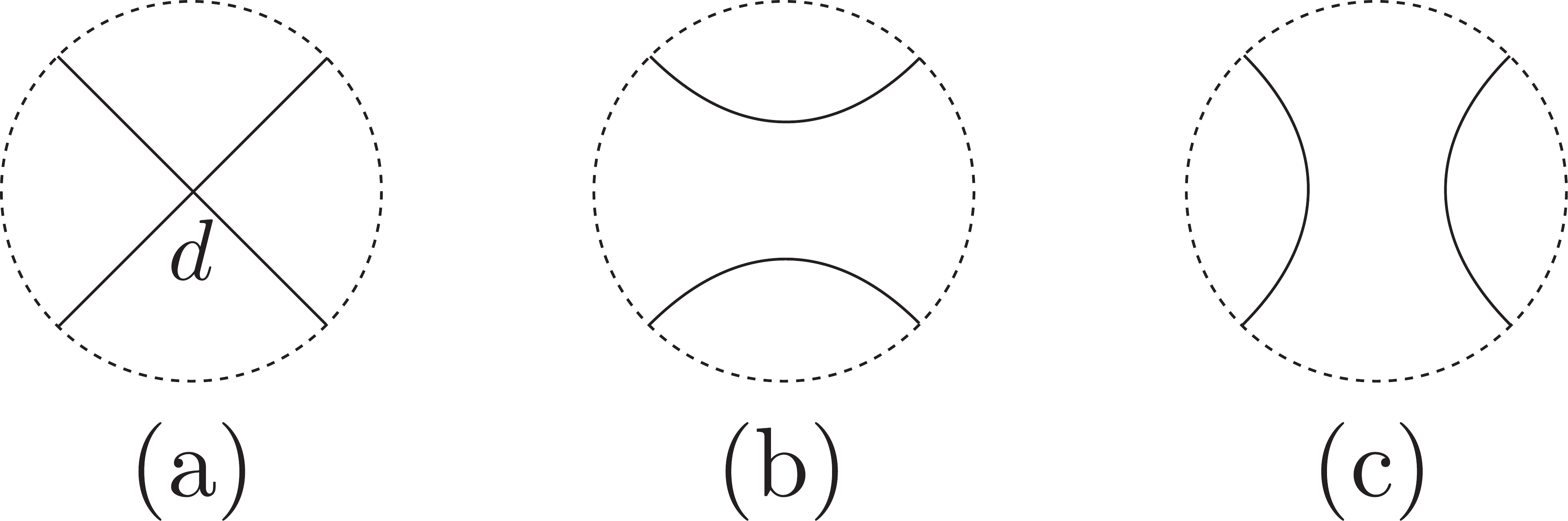}
\caption{(a) : $N(d)$, (b) : $N'(d)$, and (c) : another $N'(d)$}\label{f1}
\end{figure} 
If a connection of four points in $S^2 \setminus N(d)$ is fixed, the connection is presented by dotted arcs as in Figure~\ref{f2}~(a-1) or Figure~\ref{f2}~(c-1) (ignoring the orientation).  First, we consider a splice from (a-1) to (a-2) in Figure~\ref{f2}.   Then, a special case of such splices as in (b-1) to (b-2) is called a \emph{splice of type} $\ri^-$ and is denoted by $\ri^-$.  If the case is not $\ri^-$, as in (b-1) to (b-2), then the operation is called a \emph{splice of type} $S^-$ and is denoted by $S^-$.  Second, if we choose a connection presented by dotted arcs as in Figure~\ref{f2} (c-1), the splice from (c-1) to (c-2) in Figure~\ref{f2} is called a \emph{Seifert splice} or a splice of type \emph{Seifert}.  The splice preserves the orientation of the knot projection, as in Figure~\ref{f2}.  
\end{definition}
By definition, we have Fact~\ref{fact0}.  
\begin{fact}\label{fact0}
Every splice is one of three types: $S^-$, $\ri^-$, or Seifert.  
\end{fact}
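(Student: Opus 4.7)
The plan is to read the claim directly off Definition~\ref{dfn1}. At each double point $d$, the collection of splices is by construction the set of the two possible replacements of $N(d)$ by the smoothings $N'(d)$ shown in Figure~\ref{f1}~(b) and (c). So the task reduces to checking that the labels $S^-$, $\ri^-$, Seifert together cover, and only cover, these two local moves.

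First, I would single out the smoothing that is compatible with the orientation of the underlying immersed circle at $d$. By the last sentence of Definition~\ref{dfn1}, this smoothing is precisely the Seifert splice, matched by the pattern (c-1)$\to$(c-2) of Figure~\ref{f2}. Next, I would handle the remaining smoothing, namely the one that does not preserve orientation. By Definition~\ref{dfn1} this splice has the form (a-1)$\to$(a-2), and is by definition classified as $\ri^-$ exactly when it matches the Reidemeister-I pattern (b-1)$\to$(b-2), and as $S^-$ otherwise; these two sub-cases are mutually exclusive and jointly exhaustive by fiat.

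Combining the two steps yields exactly the trichotomy. The core observation is simply that each double point admits two smoothings, so the dichotomy ``orientation-preserving / not'' is complete, and the second case is further partitioned in two by definition. I do not anticipate any real obstacle: the statement is a book-keeping consequence of the definition, and the only thing worth double-checking is that the label ``Seifert'' in Definition~\ref{dfn1} does indeed single out the orientation-preserving smoothing — this is where I would be most careful when writing out the argument, since the identification is made through Figure~\ref{f2} and the final orientation sentence rather than through a formula.
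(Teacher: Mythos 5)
Your proposal is correct and is essentially the paper's own (non-)argument: the paper simply states that Fact~\ref{fact0} holds ``by definition,'' and your unpacking---each double point admits exactly two smoothings, the orientation-preserving one being the Seifert splice of Definition~\ref{dfn1}, the other being the (a-1)$\to$(a-2) splice, which is partitioned by fiat into $\ri^-$ and $S^-$---is precisely the bookkeeping the authors leave implicit. No gap; nothing further is needed.
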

\begin{figure}[h!]
\includegraphics[width=12cm]{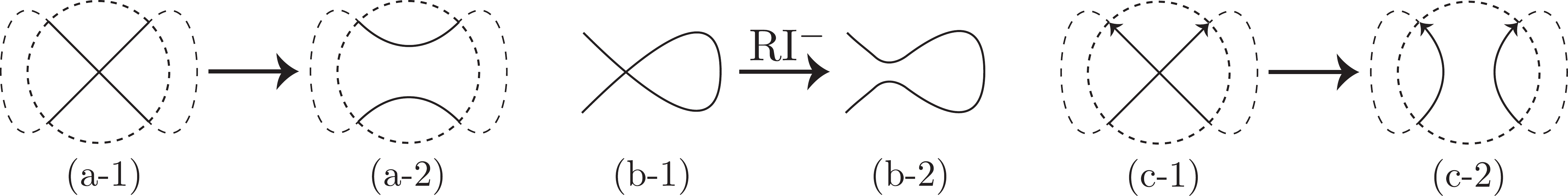}
\caption{Three types of splices are represented by pairs ((a-1), (a-2)), ((b-1), (b-2)), and ((c-1), (c-2)).    
In the operation from (c-1) to (c-2), the orientation is ignored in Definition~\ref{dfn1} and it is not ignored in Definition~\ref{state2}.}\label{f2}
\end{figure}
\begin{remark}
An operation here is introduced by \cite{Ca} (for the full twisted version) and \cite{ItoShimizu} (for the half-twisted version), and in \cite{ItoShimizu}, it is called the inverse of a \emph{half-twisted splice operation}, denoted by $A^{-1}$.   
\end{remark}
By definition, it is easy to see Fact~\ref{fact1} (it is a known fact).  
\begin{fact}\label{fact1}
Let $P$ be a knot projection with $n$ double points.  
There exist at most $2^n$ distinct sequences of splices of type $S^-$ and $\ri^-$ from $P$ to the simple closed curve $O$.  Each sequence consists of $n$ splices in total.       
\end{fact}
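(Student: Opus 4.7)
I expect this to be a short counting argument arising from the local structure of splices; the two assertions (that each sequence has length $n$, and that there are at most $2^n$ of them) should follow from separate elementary observations.

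For the length assertion, I would note that each splice at a double point $d$ is a purely local operation (Definition~\ref{dfn1}): it replaces $N(d)$ with $N'(d)$, a pair of nonintersecting arcs, which destroys the transversal intersection at $d$ and introduces no new double points. Hence each splice decreases the number of double points by exactly one, so any sequence of splices taking $P$ (with $n$ double points) to $O$ (with zero double points) must consist of exactly $n$ splices. This also shows that, inductively, every intermediate configuration in the sequence is a knot projection with a strictly smaller double-point count, so the sequence is forced to terminate at $O$ after exactly $n$ steps.

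For the upper bound $2^n$, I would exploit the fact that at each double point there are exactly two possible smoothings (Figure~\ref{f2}). A sequence of splices from $P$ to $O$ therefore determines, at each of the $n$ double points of $P$, a binary choice of which smoothing is used when that double point is finally spliced. This identifies each sequence with an element of $\{0,1\}^n$, bounding the number of sequences by $2^n$. Restricting further to sequences in which every splice is of type $S^-$ or $\ri^-$ (and none is of Seifert type) simply excludes certain binary profiles, so the count is in any case bounded by $2^n$.

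The main concern I see is justifying that this smoothing-profile encoding is faithful, i.e.~that genuinely distinct sequences correspond to distinct profiles. The point is that splices at distinct double points are supported on disjoint neighborhoods and hence commute: the effect on the projection depends only on the smoothings applied, not on the order in which they are performed. Consequently, a sequence is determined up to reordering by its profile, and the number of distinct sequences is bounded by the $2^n$ possible profiles. I do not anticipate any deeper combinatorial or topological obstacle.
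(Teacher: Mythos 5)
Your first half is fine: a splice is a replacement performed inside a small disk $N(d)$ that eliminates the double point $d$ and, by definition, creates no new double points, so each splice lowers the double-point count by exactly one and any sequence from $P$ to $O$ must have exactly $n$ terms. Note that the paper gives no argument at all for Fact~\ref{fact1} (it is declared immediate from the definitions), so the only real question is whether your counting step for the bound $2^n$ is sound.

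That is where there is a genuine gap. Your profile map into $\{0,1\}^n$ is well defined, but the bound needs it to be injective on whatever is being counted, and your commutativity remark shows only that two sequences with the same profile perform the same $n$ local replacements in possibly different orders. Different orderings are still different sequences (in general they pass through different intermediate projections), and every ordering of the $n$ double points does give a legitimate sequence of splices of type $S^-$ and $\ri^-$, since at each stage exactly one of the two smoothings at any remaining double point keeps the curve connected. So under the ordered reading there are in general $n!$ such sequences, which exceeds $2^n$ once $n \ge 4$, and the stated bound cannot be proved because it is false in that reading. The statement is true, and is how the paper actually uses it in the proof of Theorem~\ref{thm2} (a minimizing sequence is only used through ``an appropriate state in the $2^{n(P)}$ candidates''), only under the convention that two sequences consisting of the same $n$ local replacements --- i.e., determining the same state --- are counted as one; you need to make that identification explicit, and once you do, injectivity of your encoding is tautological and the commutativity discussion is unnecessary. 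Be careful also with the picture of ``the same profile, freely reordered'': which of the two smoothings at a double point is the connected one (hence admissible as $S^-$ or $\ri^-$) depends on which double points have already been spliced, so not every ordering realizes a given profile; this does not hurt an upper bound, but it shows the order-independence claim in your last paragraph is more delicate than stated.
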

\begin{definition}[unknotting-type number $u^-(P)$]\label{dfn3}
Let $P$ be a knot projection and $O$ the simple closed curve.  The nonnegative integer $u^- (P)$ is defined as the minimum number of splices of type $S^-$ for any finite sequence of splices of type $S^-$ and of type $\ri^-$ to obtain $O$ from $P$.  
\end{definition}
\begin{example}
Figure~\ref{ex0} gives examples of knot projections with $u^-(\widehat{1_1})$ $=$ $0$, $u^-(\widehat{3_1})$ $=$ $1$, or $u^-(\widehat{6_2})$ $=$ $2$.  Here, letting $i$ be a positive integer, for a knot diagram $n_i$ in the famous table in \cite{Ro}, the corresponding knot projection is denoted by $\widehat{D}$ (for details, see \cite{IT8c}).   
\end{example}
\begin{figure}[h!]
\includegraphics[width=12cm]{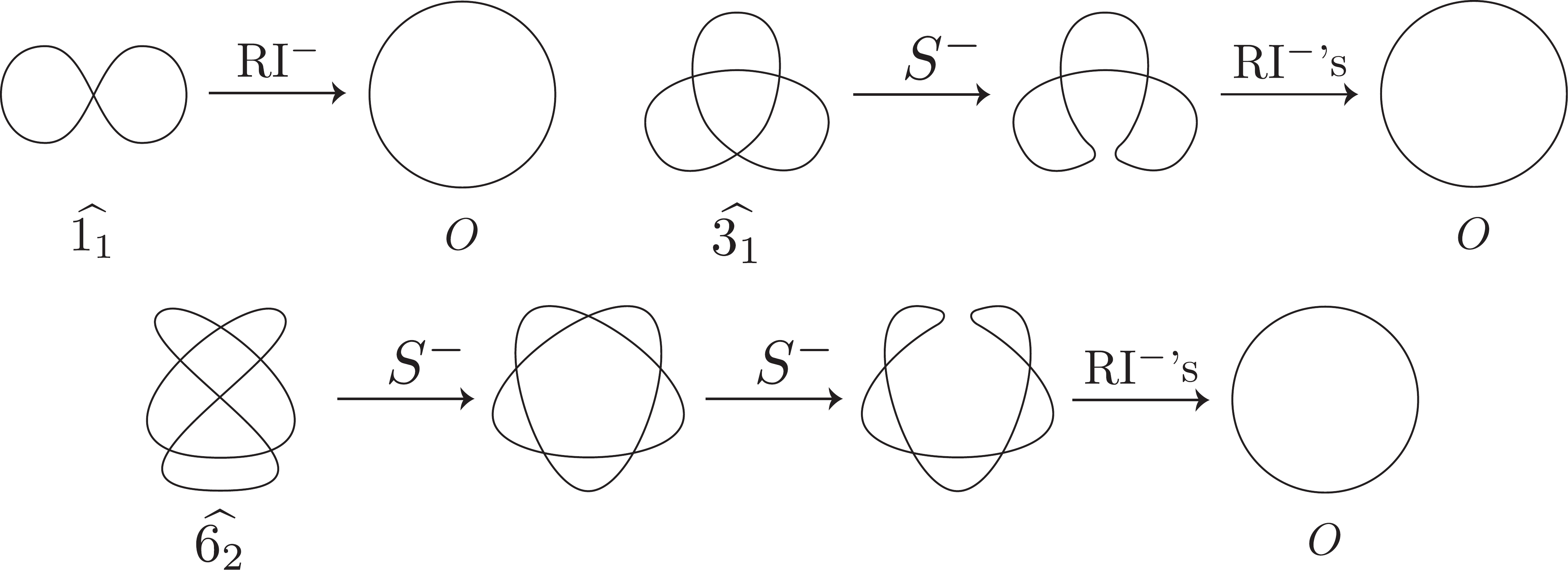}
\caption{$u^-(\widehat{1_1})$ $=$ $0$, $u^-(\widehat{3_1})$ $=$ $1$, or $u^-(\widehat{6_2})$ $=$ $2$.}\label{ex0}
\end{figure}
The definition of a connected sum of two knots is slightly different from that of two knot projections, which is not unique, as in Definition~\ref{dfn_connected}.     
\begin{definition}[a connected sum of two knot projections, a prime knot projection]\label{dfn_connected}
Let $P_i$ be a knot projection ($i=1, 2$).  Suppose that the ambient $2$-spheres corresponding to $P_1, P_2$ are oriented.   Let $p_i$ be a point on $P_i$ where $p_i$ is not a double point ($i=1, 2$).  Let $d_i$ be a sufficiently small disk with the center $p_i$ ($i=1, 2$) satisfying $d_i \cap P_i$ consists of an arc which is properly embedded in $d_i$.  Let $\widetilde{d}_i$ $=$ $cl(S^2 \setminus d_i)$, $\widetilde{P}_i$ $=$ $P_i \cap \widetilde{d}_i$, and let $h :$ $\partial \widetilde{d}_1$ $\to$ $\partial \widetilde{d}_2$  be an orientation reversing homeomorphism where $h(\partial \widetilde{P}_1)$ $=$ $\partial \widetilde{P}_2$.   Then $\widetilde{P}_1 \cup_h \widetilde{P}_2$ gives a knot projection in the oriented $2$-sphere $\widetilde{d}_1 \cup_h \widetilde{d}_2$.   The knot projection $\widetilde{P}_1 \cup_h \widetilde{P}_2$ in the oriented $2$-sphere is denoted by $P_1 \sharp_{(p_1,~ p_2,~h)} P_2$ and is called a \emph{connected sum} of the knot projections $P_1$ and $P_2$ at the pair of points $p_1$ and $p_2$ (Figure~\ref{f3}).  A connected sum of knot projections is often simply denoted by $P_1 \sharp P_2$ when no confusion is likely to arise.  
If a knot projection is not the simple closed curve and is not a connected sum of two knot projections, each of which is not the simple closed curve, it is called a \emph{prime knot projection}.  
\end{definition}   
\begin{figure}[h!]
\includegraphics[width=12cm]{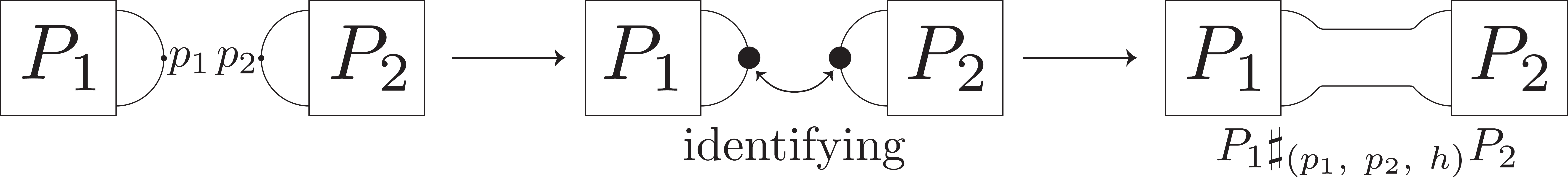}
\caption{A connected sum $P_1 \sharp_{(p_1,~p_2,~h)} P_2$ of two knot projections $P_1$ and $P_2$}\label{f3}
\end{figure}
\begin{definition}[the connected sum of two knots]
Let $K_i$ be a knot ($i=1, 2$) and $D_i$ a knot diagram of $K_i$.   Let $P_i$ be a knot projection corresponding to $D_i$.   A connected sum $D_1 \sharp_{(p_1,~p_2,~h)} D_2$ is defined as a connected sum $P_1 \sharp_{(p_1,~p_2,~h)} P_2$ in Definition~\ref{dfn_connected}.   Then, a knot having a knot diagram $D_1 \sharp_{(p_1,~p_2,~h)} D_2$ is called a connected sum of $K_1$ and $K_2$.   Because it is well-known that a connected sum of $K_1$ and $K_2$ does not depend on $(p_1,~p_2,~h)$, the connected sum is denoted by $K_1 \sharp K_2$.   
\end{definition}
\begin{definition}[crosscap number]\label{dfn_cross}
The \emph{crosscap number} $C(K)$ of a knot $K$ is defined by 
$C(K)$ $=$ $\min \{$ $1-\chi(\Sigma)~|~$ a non-orientable surface $\Sigma$ with $\partial \Sigma = K \}$, where $\chi(\Sigma)$ is the Euler characteristic of $\Sigma$.  
Traditionally, we define that $K$ is the unknot if and only if $C(K)=0$.  
\end{definition}
\begin{definition}[set $\langle {\mathcal{S}} \rangle$]\label{ri_eq_notation}
Let $\ri^+$ be the inverse of a splice of type $\ri^-$ (Figure~\ref{f5}).  Let $P$ and $P'$ be knot projections.  
We say that $P \sim P'$ if $P$ and $P'$ are related by a finite sequence of operations of types ${\ri^{\pm}}$.  It is easy to see that $\sim$ defines an equivalence relation.   Let $\mathcal{S}$ be a set of knot projections.  
Let $\langle {\mathcal{S}} \rangle$ $=$ $\{ P~|$ $P \sim Q$ $(\exists Q \in {\mathcal{S}})\}$.    
\end{definition}
\begin{figure}[h!]
\includegraphics[width=5cm]{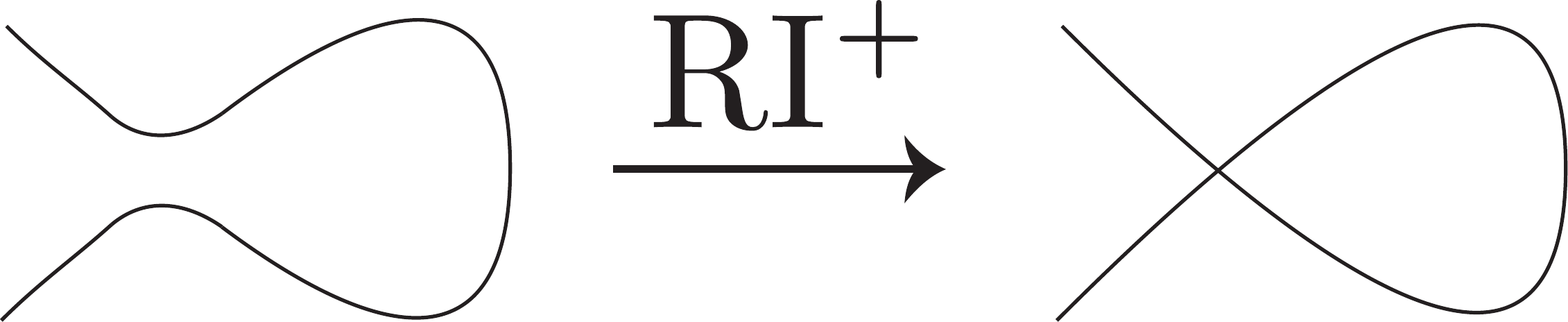}
\caption{$\ri^+$}\label{f5}
\end{figure}
\begin{notation}[Sets $\mathcal{T}$, $\mathcal{R}$, $\mathcal{P}$]\label{not1}
Let $l$, $m$, $n$, $p$, $q$, and $r$ be positive integers.  
Let $\mathcal{T}$ be the set of $(2, 2l-1)$-torus knot projections $(l \ge 2)$, $\mathcal{R}$ the set of $(2m, 2n-1)$-rational knot projections $(m \ge 1, n \ge 2)$, and  $\mathcal{P}$ the set of $(2p, 2q-1, 2r-1)$-pretzel knot projections $(p, q, r \ge 1)$ as in Figure~\ref{f6}.   Let $\langle {\mathcal{T}} \rangle \sharp \langle {\mathcal{T}} \rangle$ $=$ $\{P_1 \sharp P_2 ~|~P_1, P_2 \in  \langle {\mathcal{T}} \rangle \}$.  
\end{notation}
\begin{figure}[h!]
\includegraphics[width=12cm]{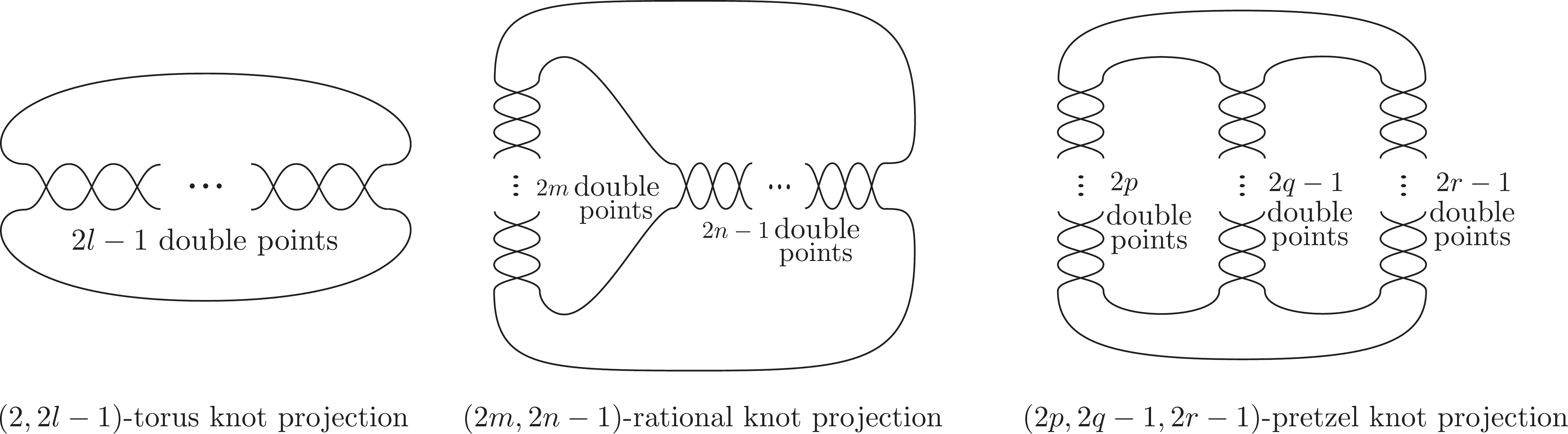}
\caption{A $(2, 2l-1)$-torus knot projection $(l \ge 2)$, a $(2m, 2n-1)$-rational knot projection $(m \ge 1, n \ge 2)$, and  a $(2p, 2q-1, 2r-1)$-pretzel knot projection $(p, q, r \ge 1)$}\label{f6}
\end{figure}
\begin{notation}\label{not3}
Let $l$, $m$, $n$, $p$, $q$, and $r$ be positive integers.  
Let $\mathcal{T}_{\kk}$ ($\mathcal{R}_{\kk}$, $\mathcal{P}_{\kk}$, resp.) be the set of $(2, 2l-1)$-torus knots $(l \ge 2)$ ($(2m, 2n-1)$-rational knots $(m \ge 1, n \ge 2)$,  $(2p, 2q-1, 2r-1)$- pretzel knots $(p, q, r \ge 1)$,~resp.) as in Figure~\ref{f8}.  Let $\mathcal{T}_{\kk} \sharp \mathcal{T}_{\kk}$ $=$ $\{ L \sharp L' ~|~L, L' \in {\mathcal{T}_{\kk}} \}$.  

\begin{figure}[h!]
\includegraphics[width=12cm]{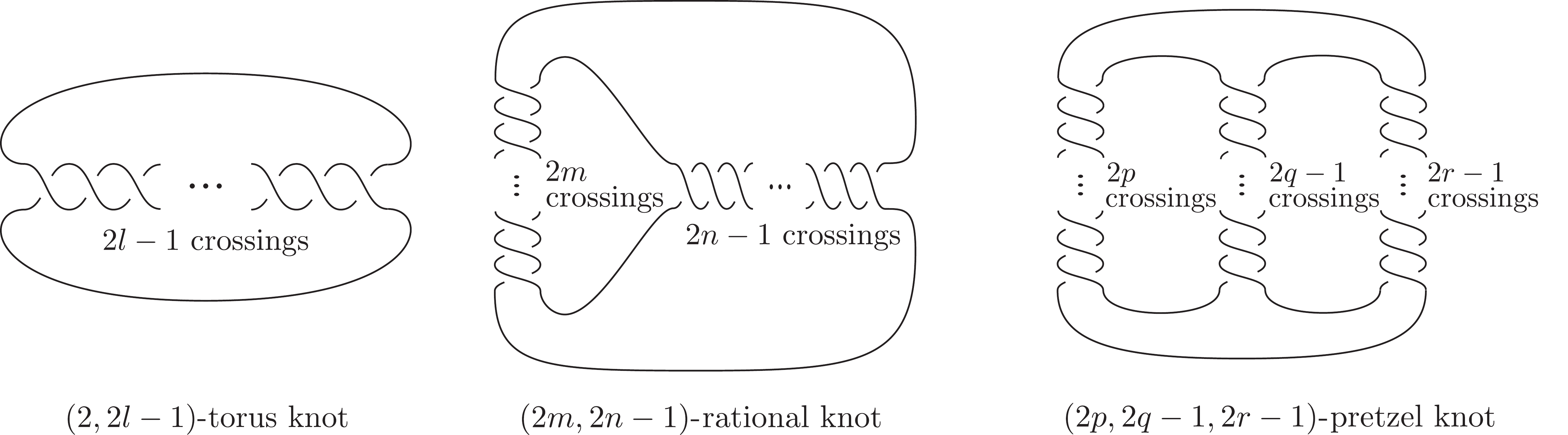}
\caption{Knot diagrams of knots in $\mathcal{T}_{\kk}$, $\mathcal{R}_{\kk}$, and $\mathcal{P}_{\kk}$ $(l \ge 2, m \ge 1, n \ge 2, p, q, r \ge 1)$
}\label{f8}
\end{figure}  
\end{notation}
\begin{definition}\label{def_uk}
Let $K$ be an alternating knot and $C(K)$ the crosscap number of $K$.  Let $Z(K)$ be the set of knot projections obtained from alternating knot diagrams of $K$.  Then, $\min_{P \in Z(K)} u^-(P)$ is an alternating knot invariant.  Let $u^- (K)$ $=$ $\min_{P \in Z(K)} u^-(P)$.  
\end{definition}

\section{Knot projections with $u^- (P)$ $\le$ $2$}\label{sec3}
\begin{theorem}\label{thm1}
Let $P$ be a knot projection.      
Let $\mathcal{T}$, $\mathcal{R}$, and  $\mathcal{P}$ be the sets as in Notation~\ref{not1}.  Then,   
\begin{enumerate}
\item $u^-(P)$  $=$ $1$ if and only if $P$ $\in \langle \mathcal{T} \rangle$.  
\item $u^-(P)$  $=$ $2$ if and only if $P$ $\in \langle \mathcal{R} \rangle \cup \langle {\mathcal{P}} \rangle \cup \langle {\mathcal{T}} \rangle \sharp \langle {\mathcal{T}} \rangle$.  
\end{enumerate}
\end{theorem}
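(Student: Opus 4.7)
My plan is to prove both statements by reversing the reduction process: build a candidate $P$ from the simple closed curve $O$ using $\ri^{\pm}$ moves together with one or two $S^+$ moves (the inverses of $S^-$). Before anything else, I observe that $u^-$ descends to $\sim$-classes, since any $\ri^+$-introduced nugatory crossing can be stripped off by $\ri^-$ at the start of a reduction without affecting the $S^-$ count. So it is enough to characterize which $\sim$-classes contain a projection with $u^- = 1$ and which contain one with $u^- = 2$.

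For the "if" directions of both parts I exhibit explicit reductions. A $(2,2l-1)$ torus projection is reduced by a single $S^-$ at one end of its twist region, after which a cascade of $\ri^-$ moves unwinds the resulting chain of monogons to $O$; by $\sim$-invariance the same holds on $\langle \mathcal{T}\rangle$. A $(2m,2n-1)$ rational projection or a $(2p,2q-1,2r-1)$ pretzel projection admits a single $S^-$ at a suitable crossing that collapses the projection into $\langle \mathcal{T}\rangle$, to which part (1) then supplies the second $S^-$. A connected sum $T_1 \sharp T_2$ with $T_i \in \mathcal{T}$ uses one $S^-$ per summand. The lower bound $u^- \ge 2$ on these families is inherited from part (1), together with the observation that their alternating knots are not $(2,2l-1)$-torus knots (so their $\sim$-classes cannot coincide with torus projections).

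For the "only if" direction of part (1), let $u^-(P)=1$ and fix a minimal reduction whose unique $S^-$ occurs at some step; let $Q$ and $Q'$ be the projections just before and after it. Then $P \sim Q$, $Q' \sim O$, and $Q$ is obtained from $Q'$ by a single $S^+$. A "pushing-through" argument — sliding the nugatory loops of $Q'$ away from the site of the $S^+$ by $\ri^{\pm}$-moves — reduces the problem to analyzing an $S^+$ applied directly to $O$, and a direct local argument identifies such projections (up to $\sim$) with the $(2,2l-1)$ torus projections. For the "only if" direction of part (2), let $u^-(P)=2$ and consider the first $S^-$ of a minimal reduction; call the projection just after it $Q'$. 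Then $u^-(Q') \le 1$, and $u^-(Q')=0$ would yield $u^-(P) \le 1$, a contradiction; so $u^-(Q')=1$ and by part (1), $Q' \in \langle \mathcal{T}\rangle$. Thus $P$ is $\sim$-equivalent to the result of a single $S^+$ applied to some torus projection. I then organize the possibilities according to where the two arcs of the $S^+$ lie relative to the bigons and the two large outer faces of the torus projection: arcs inside a bigon of the twist produce an element of $\langle \mathcal{R}\rangle$; arcs attached to the two outer faces produce an element of $\langle \mathcal{P}\rangle$; and arcs that create a decomposing disk produce an element of $\langle \mathcal{T}\rangle \sharp \langle \mathcal{T}\rangle$.

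The main obstacle is the combinatorial case analysis underlying the "only if" direction of part (2). An $S^+$ is specified by an unordered pair of arcs in the complement of the torus projection, so there are many a priori configurations, and one must check (a) that the face-based trichotomy above is exhaustive and (b) that within each case the resulting projection actually realizes the named canonical form, up to $\ri^{\pm}$-equivalence. The pushing-through argument used already in part (1) will also be needed in a more elaborate form here, to justify reducing from "$S^+$ applied to a general element of $\langle \mathcal{T}\rangle$" to "$S^+$ applied to a specific torus projection $T_l \in \mathcal{T}$". Everything else — the explicit reductions in the "if" directions and the deduction of part (2) from part (1) — is routine once this classification is in hand.
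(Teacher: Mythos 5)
Your plan is essentially the paper's own argument: your ``one $S^+$ together with $\ri^{\pm}$-moves, applied once to $O$ and then once more to a torus projection'' is exactly the paper's Move~\ref{lemma2} combined with Proposition~\ref{prop_move}, and your face-based trichotomy for where the second attachment can occur matches the paper's three sites $\alpha$, $\beta$, $\gamma$ in Figures~\ref{p2a} and \ref{p2b}, yielding $\langle \mathcal{R} \rangle$, $\langle \mathcal{P} \rangle$, and $\langle \mathcal{T} \rangle \sharp \langle \mathcal{T} \rangle$ respectively. The exhaustiveness check you single out as the main obstacle (and your ``pushing-through'' of $\ri^{\pm}$-moves past the $S^+$) is precisely the content the paper disposes of via Move~\ref{lemma2}, Proposition~\ref{prop_move}, and those figures, so the method is the same.
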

In the proof of Theorem~\ref{thm1}, we prepare Notation~\ref{notation_p} and a move (Move~\ref{lemma2}).   
\begin{notation}\label{notation_p}
Let $S^+$ be the inverse operation of a splice of type $S^-$.    
\end{notation}
\begin{move}\label{lemma2}
For any pair of simple arcs lying on the boundary of a common region, each of the two local replacements as in Figure~\ref{p1} is obtained by applying operations of type {$\ri^+$} $i-1$ times followed by a single operation of type $S^+$.     
\end{move}
\begin{figure}[h!]
\includegraphics[width=12cm]{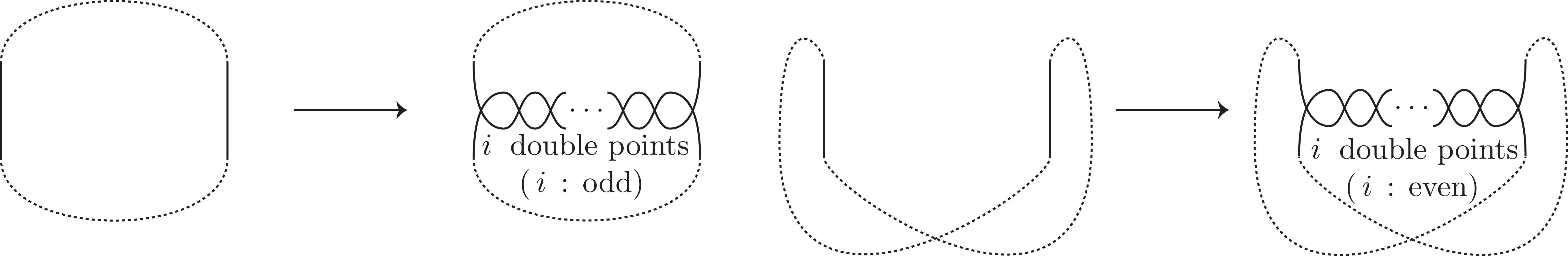}
\caption{Two local replacements}\label{p1}
\end{figure}
For the proof of Theorem~\ref{thm1}, 
it is worth to mention Proposition~\ref{prop_move}. 
\begin{proposition}\label{prop_move}
Let $P$ be a knot projection and $O$ the simple closed curve.  Let $\langle \{O\} \rangle$ be as in Definition~\ref{ri_eq_notation}.  
The following conditions are equivalent.  
\begin{enumerate}
\item[(A)] $P$ satisfies $u^- (P)=n$.  
\item[(B)] There exists $Q \in \langle \{O\} \rangle$ such that $P$ is obtained from $Q$ by applying Move~\ref{lemma2} successively $n$ times.  
\end{enumerate}
\end{proposition}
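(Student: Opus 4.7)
The plan is to show each direction of the equivalence. I interpret (B) as asserting that $n$ is the minimum number of Move~\ref{lemma2} applications sufficient to obtain $P$ from some element of $\langle \{O\} \rangle$; the two directions then reduce to the inequalities $u^-(P) \le n$ and $u^-(P) \ge n$ for the common value $n$.

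For the lower direction (an $n$-Move realization gives $u^-(P) \le n$), I would use the defining statement of Move~\ref{lemma2}, which realizes each application as $i-1$ operations of type $\ri^+$ followed by a single operation of type $S^+$. Concatenating the $n$ Moves yields a $Q \to P$ sequence employing exactly $n$ operations of type $S^+$, interspersed with $\ri^+$'s. Since $Q \in \langle \{O\} \rangle$ is nothing but the simple closed curve with R1 curls attached, $Q \to O$ can be realized using operations of type $\ri^-$ alone. Reversing $Q \to P$ and appending this $\ri^-$-reduction produces a sequence $P \to O$ containing exactly $n$ operations of type $S^-$, proving $u^-(P) \le n$.

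For the upper direction ($u^-(P) = n$ produces an $n$-Move realization), I would take a splice sequence $P = P_0 \to \cdots \to P_k = O$ achieving $u^-(P) = n$ with exactly $n$ operations of type $S^-$. Reversing gives $O = P_k \to \cdots \to P_0 = P$ whose $n$ operations of type $S^+$ sit at positions $a_1 < \cdots < a_n$, interspersed with $\ri^+$'s. I would set $Q := P_{a_1 - 1}$, so that the initial segment $O \to Q$ uses only $\ri^+$'s and hence $Q \in \langle \{O\} \rangle$. The remainder $Q \to P$ decomposes into $n$ blocks, the $j$-th comprising the $\ri^+$'s between the $(j{-}1)$-st and $j$-th $S^+$ followed by the $S^+$ at $a_j$. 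Commuting $\ri^+$'s across operations supported on disjoint local regions, I would bring each block's $\ri^+$'s into the same local region as its $S^+$; the resulting configuration should match the pattern of Figure~\ref{p1}, exhibiting each block as a single application of Move~\ref{lemma2}.

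The main obstacle I anticipate is the regrouping/matching step in the upper direction: once $\ri^+$'s have been commuted into position adjacent to their associated $S^+$, one must verify that the local configuration they produce is precisely one of the two patterns of Figure~\ref{p1} rather than some incompatible arrangement. My intended strategy is induction on $n$: peel off the final Move~\ref{lemma2} by showing that $P$ is one Move away from some projection $P'$ with $u^-(P') = n-1$, and then invoke the inductive hypothesis on $P'$. The base case $n = 0$ is immediate from the definition of $\langle \{O\} \rangle$, and once both inequalities are in hand the stated iff follows.
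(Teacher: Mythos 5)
The paper never actually proves Proposition~\ref{prop_move}: it is stated as ``worth to mention'' before the proof of Theorem~\ref{thm1} and then used, so your argument has to stand entirely on its own. Your reading of (B) as a minimality statement, and the reduction to the two inequalities $u^-(P)\le n$ and $u^-(P)\ge n$, is a sensible (arguably necessary) way to make the statement precise, and your easy direction is essentially correct --- though the assertion that every $Q\in\langle\{O\}\rangle$ can be reduced to $O$ by operations of type $\ri^-$ alone is used without justification and deserves a short innermost-$1$-gon argument.

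The genuine gap is exactly at the step you flag, and the mechanism you propose there does not work. In the reversed sequence, the $\ri^+$'s lying between the $(j{-}1)$-st and $j$-th $S^+$ need not have anything to do with the $j$-th $S^+$: for instance, if $P$ is a connected sum of two torus projections carrying one extra kink, a sequence realizing $u^-(P)=2$ may remove that kink between the two $S^-$'s, and in the reversed sequence the corresponding $\ri^+$ recreates a kink at a location far away from where the following $S^+$ acts. Commuting operations with disjoint supports changes their temporal order, not their position on the curve, so such an $\ri^+$ can never be ``brought into the same local region'' as its block's $S^+$, and that block is not an instance of Figure~\ref{p1}. The correct regrouping is the opposite one: the unrelated $\ri^+$'s must be commuted backwards, past the earlier $S^+$'s, into the initial segment, enlarging $Q$ (which remains in $\langle\{O\}\rangle$), and only those $\ri^+$'s whose kinks the $S^+$ actually threads may stay with it; one must then still verify that each residual block is one of the two patterns of Figure~\ref{p1}, which in particular requires allowing (or arguing around) the degenerate case of a bare $S^+$ with no accompanying $\ri^+$'s as an application of Move~\ref{lemma2}. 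Your fallback induction is the more promising route --- apply the first $S^-$ of a realizing sequence directly to $P$ (it commutes with the initial $\ri^-$'s), obtaining $P''$ with $u^-(P'')=n-1$ and $P$ one Move away from $P''$ --- but the proposal neither carries this out nor resolves the matching with Figure~\ref{p1}, so the direction from (A) to (B) remains unproven as written.
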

Now we prove Theorem~\ref{thm1} in the following.
\begin{proof}
\noindent (1).  
For the simple closed curve $O$, if we apply a finite sequence of a single $S^+$ and {$\ri^+$}'s, which corresponds to Move~\ref{lemma2}, then $P \in {\mathcal{T}}$.  If some ${\ri^+}$'s are applied to $P$, we have $P' \in \langle  {\mathcal{T}} \rangle$.  

Conversely, suppose that $P' \in \langle  {\mathcal{T}} \rangle$.  Then $P (\in {\mathcal{T}})$ is obtained from $P'$ by some applications of {$\ri^-$}'s.  For $P (\in {\mathcal{T}})$, it is easy to find a single $S^-$ to obtain an element of $\langle \{ O \} \rangle$.  

\noindent (2).   By (1), the argument starts on $P \in {\mathcal{T}}$.   
For each of three marked places, $\alpha$, $\beta$, and $\gamma$ as in Figure~\ref{p2a}, we find a pair of simple arcs lying on the boundary of a common region.   By applying Move~\ref{lemma2} to $\alpha$ ($\beta$, $\gamma$, resp.), we have $P \in {\mathcal{R}}$ ($P \in {\mathcal{P}}$, $P \in \langle {\mathcal{T}} \rangle \sharp \langle {\mathcal{T}} \rangle$,~resp.).  
Note that for $\gamma$, there is the ambiguity to apply Move~\ref{lemma2}.   However, essentially, the same argument works.  See Figure~\ref{p2b}.  

Conversely, for $P$ $\in  \langle \mathcal{R} \rangle \cup \langle {\mathcal{P}} \rangle \cup \langle {\mathcal{T}} \rangle \sharp \langle {\mathcal{T}} \rangle$, it is easy to find a single $S^-$ to obtain an element of $\langle {\mathcal{T}} \rangle$.
\end{proof}
\begin{figure}[h!]
\includegraphics[width=4cm]{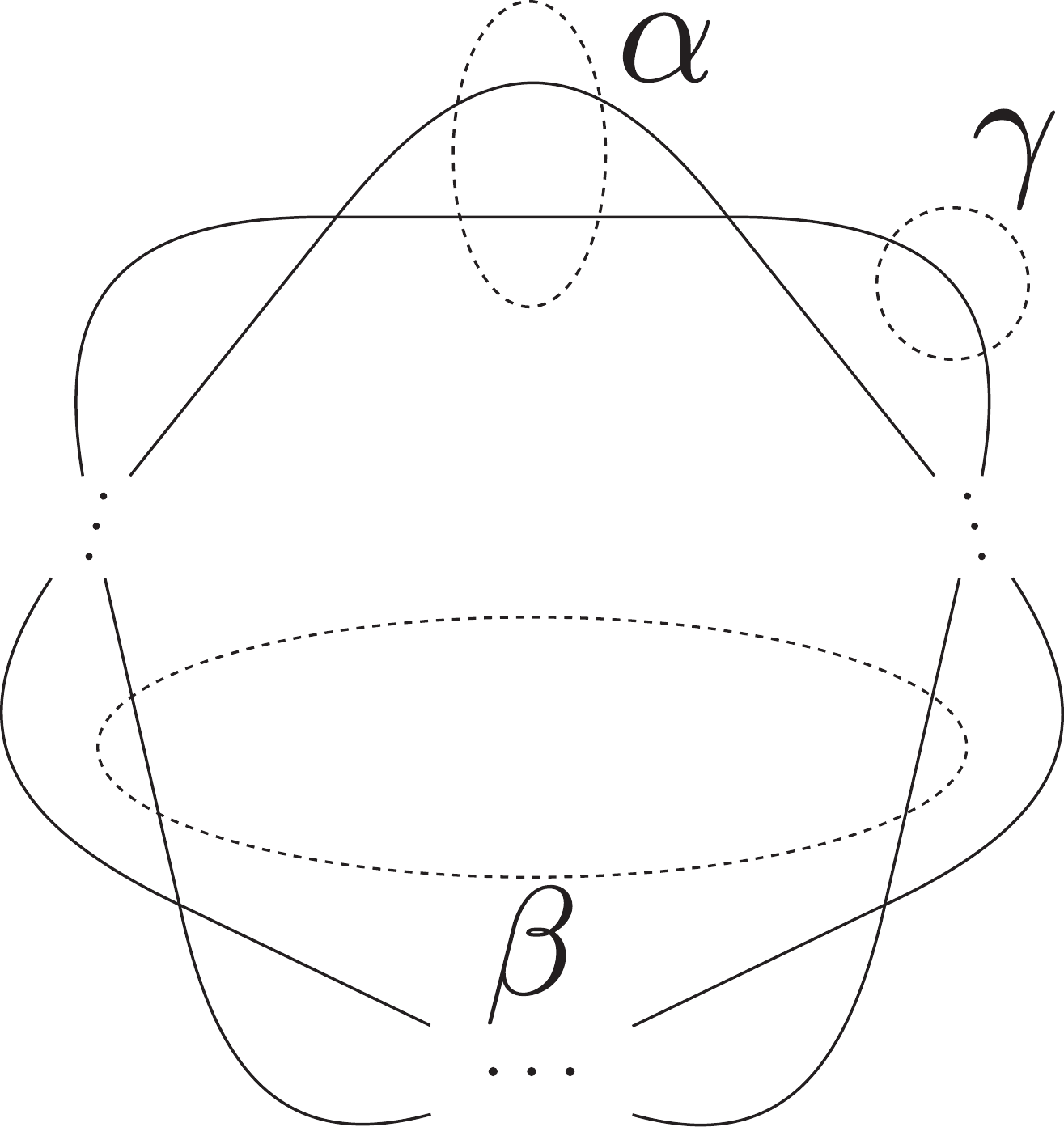}
\caption{Three places for an application of one of the local replacement}\label{p2a}
\end{figure}
\begin{figure}[h!]
\includegraphics[width=10cm]{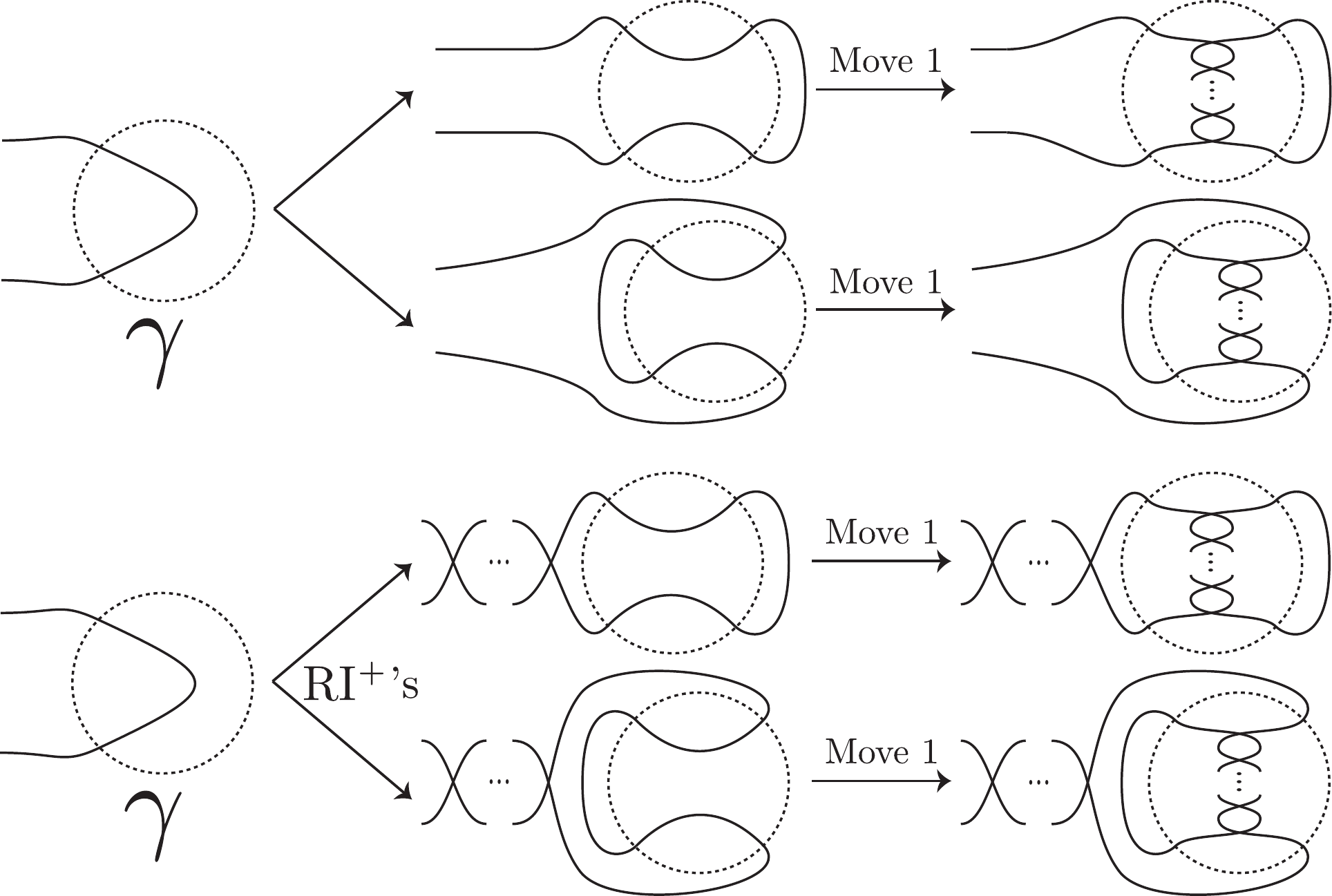}
\caption{Move~\ref{lemma2} and $\gamma$}\label{p2b}
\end{figure}

\section{$u^- (P)$ and crosscap numbers}\label{sec4}
\begin{definition}[state surface, cf.~\cite{AK}]\label{state}
Let $P$ be a knot projection and $D_P$ a knot diagram by adding any over/under information to each double point of $P$.  Let $K(D_P)$ be a knot type where $D_P$ is a representative of the knot type.    
By using the identification $S^2$ $=$ $R^2 \cup \{ \infty \}$, a knot projection $P$ (knot diagram $D_P$, resp.) is considered on $\mathbb{R}^2$ in the following.   
For a knot projection, by applying a splice to each double point, we have an arrangement of disjoint circles on $\mathbb{R}^2$.  The resulting arrangement of circles on $\mathbb{R}^2$ are called a \emph{state} and circles in a state are called \emph{state circles} (cf.~\cite{K}).     For the state, every circle is filled with disks, and the nested disks stacked in some order.  Then the surface is given by attaching half-twisted bands across the crossings of $D_P$ to obtain a surface spanning the knot $K(D_P)$.  The twisting is fixed by the type of the crossing.  The surface generated by this algorithm is called a \emph{state surface}.      

Suppose that a state $\sigma$ of a knot projection $P$ with exactly $n$ double points is given by ordered $n(P)$ splices.    Then, we denote $\sigma$ by  $(\sigma_1, \sigma_2, \dots, \sigma_{n(P)})$ and the resulting arrangement of simple closed curves on a sphere is denoted by $S_\sigma$.   Let $|S_{\sigma}|$ be the number of circles in $S_{\sigma}$.   For a state $\sigma$ of a knot diagram $D_P$, let $\Sigma_{\sigma} (D_P)$ be the state surface obtained from circles and half-twisted bands corresponding to $S_{\sigma}$ and $\sigma$.   
If $\sigma$ satisfies that every $\sigma_i$ is a Seifert splice, the state surface is orientable, it is called a \emph{Seifert state surface}, and the state is called a \emph{Seifert state} (see Fact~\ref{w_fact}).    
\end{definition}
\begin{fact}[a well-known fact]\label{w_fact}
For a positive integer $n$, $2^n$ states from an $n$ crossing knot diagram, all except the Seifert state give non-orientable state surfaces.  
For every alternating knot $K$, there exists a Seifert state surface whose genus is $g(K)$ via an algorithm as in Definition~\ref{state2}.  
\end{fact}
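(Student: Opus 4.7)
The plan is to handle the two assertions separately, since they rely on quite different techniques.

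For the first assertion, I would orient $D_P$ coherently from the source circle $S^1$; this induces orientations on the four arcs meeting at each double point. By definition (Figure~\ref{f2}~(c-1)--(c-2)), the Seifert splice is the unique splice that joins arcs of opposite induced orientation at the crossing, so the state circles of the all-Seifert state $S_\sigma$ inherit a coherent orientation from $K(D_P)$. A direct local check at a crossing shows that the half-twisted band attached there is compatible with these disk orientations, so the resulting surface is the classical Seifert surface and is orientable. Conversely, if $\sigma$ differs from the Seifert state at some crossing $i$, then the band at that crossing joins two disk boundary arcs carrying \emph{identical} induced orientations, forcing a genuine half-twist; a short loop through the core of that band and back across the two adjacent disks reverses the transverse normal, exhibiting a M\"obius band inside $\Sigma_\sigma(D_P)$ and hence non-orientability.

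For the second assertion, I would use the classical Crowell--Murasugi approach via the Alexander polynomial. Given a reduced alternating diagram $D$ of $K$ with $c$ crossings and $s$ Seifert circles, the Seifert state surface $\Sigma$ has Euler characteristic $s - c$, so $g(\Sigma) = (c - s + 1)/2$. Two ingredients combine: the Seifert inequality $2 g(K) \geq \deg \Delta_K(t)$, which holds for every knot, and Crowell's identity $\deg \Delta_K(t) = c - s + 1$ for reduced alternating diagrams. I would prove the latter by writing down a Seifert matrix $V$ for $\Sigma$, expanding $\det(tV - V^T)$, and verifying that the top-degree monomial has non-zero coefficient; the non-cancellation uses the checkerboard coloring and the alternating hypothesis essentially (Crowell's sign argument, or equivalently a spanning tree expansion). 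Chaining the two ingredients yields $2 g(K) \geq c - s + 1 = 2 g(\Sigma) \geq 2 g(K)$, so equality holds throughout and $\Sigma$ attains $g(K)$.

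The principal obstacle will be the leading-coefficient computation for $\Delta_K(t)$ in the alternating case, which is precisely where the alternating hypothesis enters in a nontrivial way; all other steps are either routine local checks or standard inequalities. The orientability bookkeeping in the first assertion will be straightforward once orientations are fixed, but it must be verified uniformly over all crossing signs and all non-Seifert states; the cleanest packaging is to evaluate a $\mathbb{Z}/2$-valued cocycle on each band---a genuine half-twist contributes $1$, a compatible band contributes $0$---and to observe that the total class vanishes on every loop if and only if $\sigma$ is the Seifert state.
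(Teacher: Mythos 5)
The paper offers no proof of this statement (it is quoted as a well-known fact), so the comparison is with the standard arguments. Your treatment of the second assertion is the classical route: Seifert's bound $2g(K)\ge \deg \Delta_K(t)$ together with Crowell's identity $\deg \Delta_K(t)=c-s+1$ for a reduced alternating diagram, giving $2g(K)\ge c-s+1=2g(\Sigma)\ge 2g(K)$. That outline is correct, and you rightly identify the one substantive classical input (non-cancellation of the extreme coefficient of $\Delta_K$) and defer it to Crowell's argument; since the paper itself treats the whole statement as known, that is acceptable.

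The genuine gap is in the converse half of the first assertion. You argue that at a crossing where the splice is not the Seifert splice, the band ``joins two disk boundary arcs carrying identical induced orientations, forcing a genuine half-twist,'' and that a short loop through the band core ``and back across the two adjacent disks'' reverses the transverse normal. This step would fail as written. First, the half-twist cannot by itself be the source of non-orientability: in the state surface construction \emph{every} band is half-twisted, including all bands of the Seifert state surface, which is orientable; so an argument that reads a M\"obius band off one half-twisted band proves too much. Second, the loop you describe need not exist: the two ends of the band may lie on different state circles, whose disks are disjoint, so any return path must run through other bands, and orientation-reversal of the resulting loop then depends on all of them, not on the one crossing you singled out. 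Moreover, ``identical induced orientations'' presupposes a coherent orientation of the state circles induced by the knot, which is exactly what fails for a non-Seifert state. The clean argument runs in the opposite direction: assume $\Sigma_{\sigma}(D_P)$ is orientable and orient it; the induced boundary orientation on the knot makes all knot arcs lying on a given state circle coherent with the boundary orientation of that circle's disk, so at every crossing each splice arc has one inward and one outward strand; hence every splice respects the knot orientation, i.e., $\sigma$ is the Seifert state. Your closing $\mathbb{Z}/2$-cocycle packaging restates this claim (``the total class vanishes on every loop if and only if $\sigma$ is the Seifert state'') rather than proving it; the boundary-orientation argument is what actually establishes it.
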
  
\begin{definition}[Seifert's algorithm]\label{state2}
For a given knot, we orient it.  Then,  
for every crossing of a knot diagram of the knot, if we choose the splice from (c-1) to (c-2) as in Figure~\ref{f2}, then the state surface given by Definition~\ref{state} is orientable.  
The resulting surface does not depend on the orientation of the knot.  
Traditionally, the process is called \emph{Seifert's algorithm}.     
State circles appearing in the process of Seifert's algorithm are called \emph{Seifert circles}.  
\end{definition}
\begin{theorem}\label{thm2}
Let $P$ be a knot projection and $D_P$ a knot diagram by adding any over/under information to each double point of $P$.  Let $K(D_P)$ be the knot type having a knot diagram $D_P$.  
Let $C(K(D_P))$ be the crosscap number of $K(D_P)$.  
Then, 
\[  
C(K(D_P)) \le u^- (P).  
\]
\end{theorem}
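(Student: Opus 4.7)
My plan is to prove the inequality by inductively constructing an explicit spanning surface $\Sigma$ of $K(D_P)$ that realizes the bound, i.e.\ satisfies $1-\chi(\Sigma)\le u^-(P)$ and is non-orientable whenever $u^-(P)\ge 1$. The induction is on $k=u^-(P)$, and the main tool is Proposition~\ref{prop_move}, which presents $P$ as obtained from some $Q\in\langle\{O\}\rangle$ by $k$ successive applications of Move~\ref{lemma2}.

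For the base case $k=0$, the projection $P$ lies in $\langle\{O\}\rangle$, so at the diagram level $P$ differs from $O$ by a sequence of Reidemeister~I moves; hence $K(D_P)$ is the unknot and $C(K(D_P))=0=u^-(P)$, with the spanning disk realizing the bound. For the inductive step with $k\ge 1$, let $P'$ denote the projection immediately before the final application of Move~\ref{lemma2}; then $u^-(P')=k-1$, and the inductive hypothesis yields a spanning surface $\Sigma'$ of $K(D_{P'})$ with $1-\chi(\Sigma')\le k-1$.

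The final Move~\ref{lemma2} consists of $i-1$ operations of type $\ri^+$ followed by a single operation of type $S^+$. Each $\ri^+$ is a Reidemeister~I move on the knot diagram, so it preserves the knot type; ambient isotopy then lets me treat $\Sigma'$ as a spanning surface of the intermediate knot diagram with $\chi$ and orientability unchanged. The $S^+$ inserts a new crossing between a pair of simple arcs on the boundary of a common region $R$, and I accommodate it by gluing a half-twisted band to $\Sigma'$ inside $R$, with the twist chosen to match the over/under data of the new crossing in $D_P$. Band attachment drops $\chi$ by one, and the half-twist makes the resulting surface $\Sigma$ non-orientable, so $1-\chi(\Sigma)=1-\chi(\Sigma')+1\le k$, yielding $C(K(D_P))\le 1-\chi(\Sigma)\le k=u^-(P)$.

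The main obstacle I anticipate is making rigorous the two geometric claims that drive the construction: first, that each $\ri^+$ inside a single Move~\ref{lemma2} can be realized by a local isotopy of $\Sigma'$ which keeps the surface embedded and preserves both $\chi$ and orientability, regardless of the over/under data chosen for the new kink in $D_P$; and second, that the pair of arcs targeted by the subsequent $S^+$ always lies on the boundary of a single region inside which the half-twisted band can be drawn without new intersections. Both facts reduce to the local nature of Reidemeister~I moves and of band attachment, but they must be spelled out carefully to see that the construction goes through in full generality.
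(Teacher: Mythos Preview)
Your inductive strategy is sound in spirit and, if completed, yields essentially the same surface as the paper, but the paper's route is more direct and sidesteps the very obstacle you flag. Instead of inducting through Proposition~\ref{prop_move}, the paper takes a realizing sequence $P=P_1\stackrel{Op_1}{\to}\cdots\stackrel{Op_{n(P)}}{\to} O$ and assigns a state $\sigma$ to $P$ all at once: at a double point removed by an $S^-$ use that same splice, while at a double point removed by an $\ri^-$ use the \emph{opposite} splice (the one that splits off the $1$-gon as its own circle). Tracking components step by step gives $|S_\sigma|=1+\#\{Op_i=\ri^-\}$, so the state surface $\Sigma_\sigma(D_P)$ has $\chi=|S_\sigma|-n(P)=1-u^-(P)$; it is non-orientable because $\sigma$ is not the Seifert state, and $C(K(D_P))\le u^-(P)$ follows immediately.

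The obstacle you name is genuine and is not dissolved by ``locality'' alone. To attach the half-twisted band for the final $S^+$ you must place it so that (i) the resulting boundary is the prescribed $K(D_P)$ and (ii) its interior avoids $\Sigma'$. Condition (i) pins down the isotopy class of the band rel its attaching arcs in the complement of the intermediate knot, so you are \emph{not} free to reroute the band around $\Sigma'$; and your inductive hypothesis records only $\chi(\Sigma')$ and non-orientability, carrying no information about where $\Sigma'$ sits relative to the planar region $R$. The natural repair is to strengthen the hypothesis to ``$\Sigma'$ is a state surface of $D_{P'}$'' and to handle each $\ri^+$ not by a bare ambient isotopy but by adjoining the new $1$-gon disk together with its half-twisted band (net $\chi$ change zero); then the $S^+$ band lies over a complementary region by construction and embeds disjointly. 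But maintaining that invariant through the induction is exactly the paper's global state-surface construction, so the inductive packaging adds bookkeeping without buying additional leverage.
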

\begin{proof}
Let $n(P)$ be the number of double points of $P$.  In the following, we obtain an appropriate state in the $2^{n(P)}$ candidates to find a state surface by using a sequence realizing $u^-(P)$.  

Consider a sequence of splices that realizes $u^-(P)$.  Denote it by 
\[
P=P_1 \stackrel{Op_1}{\to} P_2  \stackrel{Op_2}{\to} \dots \stackrel{Op_{n(P)}}{\to} O.  
\]
Then, let $\sigma$ $=$ $(\sigma_1, \sigma_2, \dots, \sigma_{n(P)})$ by assigning a splice $\sigma_i$ to each double point of $P$ as follows.    
\begin{itemize}
\item If $Op_i = S^-$ at a double point of $d$, the splice $\sigma_i$ is defined as $S^-$ (Figure~\ref{f1_proof}, the left half).  
\item If $Op_i = \ri^-$ at a double point of $d$, the  splice $\sigma_i$ is defined as the splice which is different from $\ri^-$ (Figure~\ref{f1_proof}, the right half).  
\end{itemize}
\begin{figure}[h!]
\includegraphics[width=10cm]{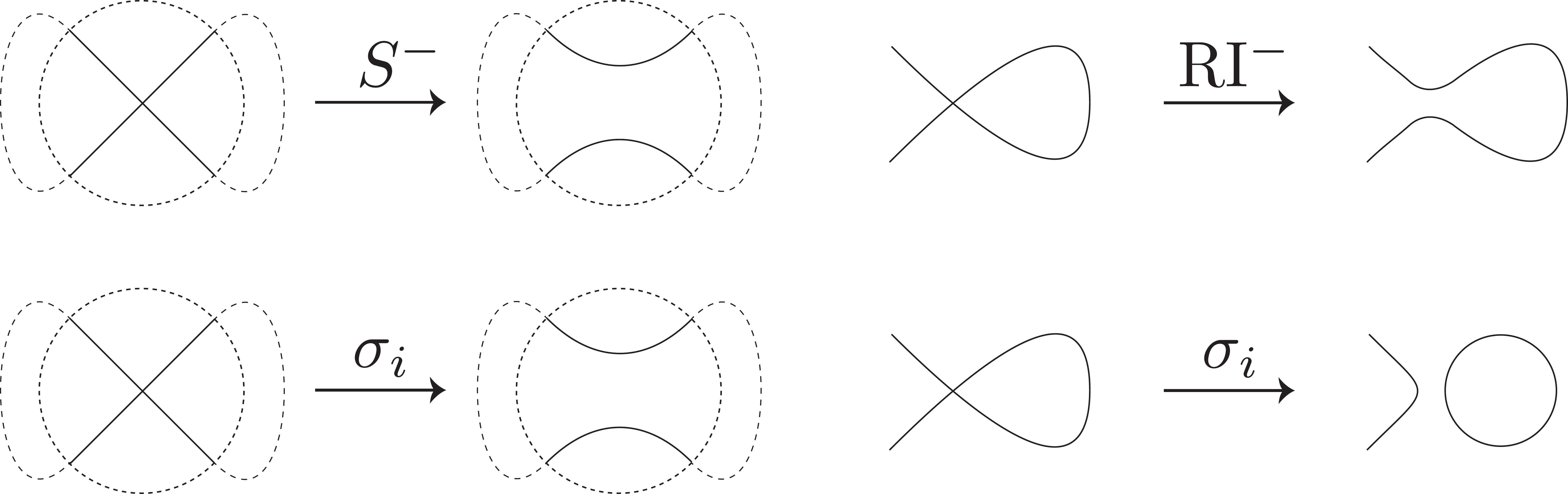}
\caption{$\sigma_i$ for $S^-$ (the left half) and $\sigma_i$ for $\ri^-$ (the right half)}\label{f1_proof}
\end{figure}   
If every $Op_i$ is type $\ri^-$, in which case $K(D_P)$ is the unknot, $C(K(D_P))$ $=$ $0$ and  $u^- (P)$ $=$ $0$, which is one of the case of the statement.  Thus, we may suppose that at least some $Op_i$ is type $S^-$.  Then, since $\sigma$ is not a Seifert state (Definition~\ref{state}), $\Sigma_{\sigma} (D_P)$ is non-orientable (cf.~Fact~\ref{w_fact}).  
 
For $K(D_P)$, let $\Sigma_0$ be a non-orientable surface that spans $K(D_P)$ and satisfies $\chi(\Sigma_0)$ $=$ $1-C(K(D_P))$.   
By the maximality of $\chi(\Sigma_0)$, 
\begin{align}\label{eq0}
\chi(\Sigma_0) \ge \chi(\Sigma_{\sigma} (D_P)).   
\end{align}
Therefore, 
\begin{align}\label{eq1}
1 - C(K(D_P)) &= \chi(\Sigma_0) \ge \chi(\Sigma_{\sigma} (D_P)) = |S_{\sigma}| - n(P).
\end{align}
Note that a splice $\sigma_i$ corresponding to $S^-$ from $P_i$ to $P_{i+1}$ does not change the number of the components and a splice $\sigma_i$ corresponding to $\ri^-$ from $P_i$ to $P_{i+1}$ increases the number of the components by exactly one (Figure~\ref{f1_proof}).  Observing the process in the finite sequence from $P$ to the simple closed curve $O$, it is easy to see $|S_{\sigma}|$ $=$ $1+$ $\sharp \{Op_i~|~ Op_i = \ri^- \}$.  
Note also that $n(P)$ $=$ $\sharp \{Op_i~|~ Op_i = \ri^- \}$ $+$ $\sharp \{Op_j~|~ Op_j = S^- \}$.  
Therefore, 
\[
|S_{\sigma}| - n(P) = 1 + \sharp \{Op_i~|~ Op_i = \ri^- \} - (\sharp \{Op_i~|~ Op_i = \ri^- \} + \sharp \{Op_j~|~ Op_j = S^- \}).  
\]
Thus, 
\begin{equation}\label{eq2}
\begin{split}
1 - C(K(D_P)) &\ge |S_{\sigma}| - n(P) \\
&= 1 -  \sharp \{Op_j~|~ Op_j = S^- \}  \\
& = 1 - u^-(P).  
\end{split}
\end{equation}
\end{proof}
\begin{notation}\label{not4}
For a knot diagram $D_P$ of a knot projection $P$, a particular state surface introduced in the proof of Theorem~\ref{thm2} is denoted by $\Sigma_u$ (it is a state surface corresponding to a sequence of splices that realized $u^- (P)$).      
\end{notation}
By this proof, for the equalities on (\ref{eq0}),  (\ref{eq1}), and (\ref{eq2}), we have Lemma~\ref{lemma0}.     
\begin{lemma}\label{lemma0}
Let $P$, $D_P$, $K(D_P)$, $C(K(D_P))$, and $\Sigma_0$ be as in Theorem~\ref{thm2}, i.e.,    
let $P$ be a knot projection, $D_P$ a knot diagram by adding any over/under information to each double point of $P$, $K(D_P)$ the knot type having a knot diagram $D_P$, $C(K(D_P))$ the crosscap number of $K(D_P)$, $\Sigma_0$ a non-orientable surface that spans $K(D_P)$ and satisfies $\chi(\Sigma_0)$ $=$ $1-C(K(D_P))$.   Let $\Sigma_u$  be as in Notation~\ref{not4}.  

Then, $\chi(\Sigma_0) = \chi(\Sigma_u)$ if and only if $C(K(D_P))=u^-(P)$.  
\end{lemma}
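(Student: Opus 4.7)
The plan is to extract the lemma directly from the Euler-characteristic computations already carried out in the proof of Theorem~\ref{thm2}; both sides of the claimed equivalence admit closed-form expressions in terms of $C(K(D_P))$ and $u^-(P)$, and once these are in hand the result is a one-line algebraic rearrangement.

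First, by the choice of $\Sigma_0$ in the statement (a non-orientable surface spanning $K(D_P)$ that realizes the maximum of $\chi$), Definition~\ref{dfn_cross} gives $\chi(\Sigma_0) = 1 - C(K(D_P))$. Second, I would recall the explicit computation of $\chi(\Sigma_u)$ performed inside the proof of Theorem~\ref{thm2}: starting from $\chi(\Sigma_u) = |S_\sigma| - n(P)$ and substituting the bookkeeping identities $|S_\sigma| = 1 + \sharp\{Op_i \mid Op_i = \ri^-\}$ and $n(P) = \sharp\{Op_i \mid Op_i = \ri^-\} + \sharp\{Op_j \mid Op_j = S^-\}$, together with the defining property $\sharp\{Op_j \mid Op_j = S^-\} = u^-(P)$ of the sequence used to build $\Sigma_u$ in Notation~\ref{not4}, one obtains $\chi(\Sigma_u) = 1 - u^-(P)$.

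With these two identities, the equivalence becomes tautological: $\chi(\Sigma_0) = \chi(\Sigma_u)$ rewrites as $1 - C(K(D_P)) = 1 - u^-(P)$, which is the same assertion as $C(K(D_P)) = u^-(P)$. There is no genuine obstacle here; the only subtle point is the degenerate case in which every $Op_i$ is of type $\ri^-$, but that case is already settled in the proof of Theorem~\ref{thm2} by $u^-(P) = C(K(D_P)) = 0$, so both sides of the equivalence hold simultaneously. Substantively, the lemma simply records that the single genuine inequality in the chain (\ref{eq0})--(\ref{eq2}), namely $\chi(\Sigma_0) \ge \chi(\Sigma_u)$, collapses to equality precisely when $\Sigma_u$ itself realizes the crosscap number.
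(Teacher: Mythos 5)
Your proposal is correct and follows essentially the same route as the paper: the paper's own proof of Lemma~\ref{lemma0} is exactly the chain of equivalences $\chi(\Sigma_0)=\chi(\Sigma_u) \Longleftrightarrow 1-C(K(D_P))=|S_\sigma|-n(P) \Longleftrightarrow C(K(D_P))=u^-(P)$, using the identities $\chi(\Sigma_0)=1-C(K(D_P))$ and $|S_\sigma|-n(P)=1-u^-(P)$ established in the proof of Theorem~\ref{thm2}, which is precisely your computation. Your explicit remark that the only inequality in (\ref{eq0})--(\ref{eq2}) is $\chi(\Sigma_0)\ge\chi(\Sigma_u)$ matches the paper's reading of the lemma.
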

\begin{proof}
By Notation~\ref{not4}, we choose $\Sigma_{\sigma} (D_P)$ that is $\Sigma_u$ as in the proof of Theorem~\ref{thm2}.  Then,     
\begin{align*}
&\chi(\Sigma_0) = \chi(\Sigma_{\sigma} (D_P))~{\textrm{on}}~(\ref{eq0})\\
\Longleftrightarrow& 
 1 - C(K(D_P)) = |S_{\sigma}| - n(P)~{\textrm{on}}~(\ref{eq1}) \\ 
\Longleftrightarrow&
C(K(D_P))=u^-(P)~{\textrm{on}}~(\ref{eq2}).  
\end{align*} 
\end{proof}
To discuss the equality of (\ref{eq0}), we review  Fact~\ref{AKthm}.   Here, we give Definition~\ref{gon} only, and review their fact.  
\begin{definition}[$n$-gon]\label{gon}
Let $P$ be a knot projection and let $\partial F$ be the boundary of the closure of a connected component $F$ of $S^2 \setminus P$.  Let $n$ be a positive integer.  Then, $\partial F$ is called an \emph{$n$-gon} if, when the double points of $P$ that lie on $\partial F$ are removed, the remainder consists of $n$ connected components, each of which is homeomorphic to an open interval.  For a knot diagram, the definition of an $n$-gon is straightforward.      
\end{definition}
Following \cite{AK}, a \emph{genus} is defined to be the orientable genus of a knot or $\frac{1}{2}$ of the crosscap number.  
\begin{fact}[Adams-Kindred, Theorem~3.3 of \cite{AK}]\label{AKthm}
For every alternating knot diagram, the following algorithm $(1)$--$(3)$ always generates a minimal genus state surface.  
\end{fact}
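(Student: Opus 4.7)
The plan is to establish the statement by proving two complementary assertions: that the algorithm, when applied to an alternating diagram $D_P$, terminates with a well-defined state $\sigma$ whose associated state surface $\Sigma_\sigma(D_P)$ has the orientability type appropriate to the flavor of genus being computed, and that this surface achieves the minimum value of $1 - \chi(\Sigma)$ among all spanning surfaces of $K(D_P)$ of that orientability type.

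First I would verify that the algorithm's local splicing rules are mutually consistent and that its output can be interpreted as an ordered $n(P)$-tuple of splices in the sense of Definition~\ref{state}. Because each splice either preserves $|S_\sigma|$ or increases it by one, and because the three splice types of Fact~\ref{fact0} contribute differently to the orientability of the assembled surface, this amounts to checking that the algorithm's choices produce the splice types required by the target genus---Seifert splices throughout for the orientable case, and a controlled mixture of non-Seifert splices for the non-orientable case.

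Next I would handle the lower bound. For the orientable case, the classical Crowell--Murasugi theorem gives $g(K) = \deg \Delta_K(t)/2$ for alternating $K$, and Seifert's algorithm on an alternating diagram already realizes this bound; I would check that the algorithm specializes to Seifert's algorithm whenever the orientable version is invoked. For the crosscap-number case, the natural strategy is to bound the maximum Euler characteristic of a non-orientable spanning surface from above using a polynomial invariant---such as a span estimate from the Kauffman bracket, or the colored Jones coefficients exploited by Kalfagianni--Lee \cite{KL}---and then to verify that the algorithm's output realizes this upper bound.

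The main obstacle will be the non-orientable lower bound: one must argue that a surface constructed by a purely diagrammatic procedure is globally optimal against every topologically embedded non-orientable spanning surface of $K(D_P)$. This necessarily uses the alternating hypothesis in an essential way, since the conclusion fails for arbitrary non-alternating diagrams, and typically requires a polynomial-invariant argument, a homological obstruction, or a Menasco-type incompressibility analysis in the knot complement to complete.
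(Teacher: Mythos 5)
This statement is quoted in the paper as a Fact imported from Adams--Kindred \cite{AK}; the paper itself offers no proof, so your proposal has to be measured against the actual argument in \cite{AK}, which is a direct geometric one: every spanning surface (orientable or not) of an alternating link is isotoped into a standard layered position with respect to the projection sphere, and an induction on the complexity of the diagram shows that its Euler characteristic is at most that of some state surface produced by the smallest-$m$-gon algorithm. Your sketch never engages with this; it correctly isolates the hard part (the lower bound against \emph{all} non-orientable spanning surfaces) but then defers it to ``a polynomial-invariant argument, a homological obstruction, or a Menasco-type incompressibility analysis,'' which is exactly the content of the theorem, not a proof of it. As it stands the proposal is a plan with the central step missing.

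Moreover, the specific tools you name for that step do not work in the form you invoke them. The Kalfagianni--Lee bounds from the colored Jones polynomial are proved \emph{using} Adams--Kindred's theorem (and in general determine the crosscap number only up to an additive gap), so routing the lower bound through \cite{KL} is circular; and Kauffman bracket span arguments yield crossing-number and orientable-genus type conclusions for alternating diagrams, not a sharp upper bound on the Euler characteristic of non-orientable spanning surfaces. The orientable half of your plan (Crowell--Murasugi plus the observation that the algorithm contains Seifert's algorithm as the all-Seifert-splice branch) is fine but peripheral: the force of Fact~\ref{AKthm} is precisely the simultaneous comparison with non-orientable surfaces, where ``genus'' means half the crosscap number. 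To repair the proposal you would need to reproduce the geometric core of \cite{AK}: put an arbitrary spanning surface into layered/standard position relative to the alternating diagram and argue inductively, gon by gon, that the algorithm's branching never loses Euler characteristic against it.
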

\noindent{\bf{Minimal genus algorithm.}}  

Let $D_P$ be an alternating knot diagram.  

\begin{itemize}
\item[(1)]  Find the smallest $m$ for which $D_P$ contains an $m$-gon.  
\item[(2)] If $m \le 2$, then we apply the splice(s) to the crossing(s) so that the $m$-gon becomes a state circle.  If $m > 2$, then $m=3$ by a simple Euler characteristic argument on the knot projection (see, e.g., \cite[Lemma~3.1]{KL} or \cite[Lemma~2]{IT_triple1}).  Then, choose a triangle of $D_P$.  From here, the process has two branches: For one branch, we apply splices to the crossings on this triangle's boundary so that the triangle becomes a state circle.  For the other branch, we apply splices to the crossings the opposite way.  
\item[(3)] Repeat Steps (1) and (2) until each branch reaches a state.  Of all resulting state surfaces, choose the one with the smallest genus.  
\end{itemize}

Here, recall notations $\langle {\mathcal{T}} \rangle$, $\langle {\mathcal{P}} \rangle$, and $\langle {\mathcal{R}} \rangle$  in Theorem~\ref{thm1} (i.e., Definition~\ref{ri_eq_notation} and Notation~\ref{not1}) and notations $\Sigma_0$ and $\Sigma_u$ in the proof of Theorem~\ref{thm2} (i.e., see the statement of Lemma~\ref{lemma0} and Notation~\ref{not4}).   We also prepare Notation~\ref{not2}.  
\begin{notation}\label{not2}
If a knot type $K(D_P)$ has an alternating knot diagram $D_P$ obtained by adding over/under information to $P$, the knot type is denoted by $K^{alt}(P)$.    
\end{notation}
  
By using Fact~\ref{AKthm}, we have Lemma~\ref{lemma3}.    
\begin{lemma}\label{lemma3}
Let $P$ be a knot projection.   Let $K^{alt}(P)$ be as in Notation~\ref{not2}.  

\noindent$(1)$ If $P \in \langle {\mathcal{T}} \rangle$, then $C(K^{alt}(P))$ $=$ $u^-(P)$ $= 1$.

\noindent$(2)$ If $P$ $\in \langle \mathcal{R} \rangle \cup \langle {\mathcal{P}} \rangle \cup \langle {\mathcal{T}} \rangle \sharp \langle {\mathcal{T}} \rangle$, then $C(K^{alt}(P))$ $=$ $u^-(P)$ $= 2$.    
\end{lemma}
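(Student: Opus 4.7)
The plan is to combine the upper bound $C(K^{alt}(P)) \le u^-(P)$ supplied by Theorem~\ref{thm2} with a matching lower bound, noting that Theorem~\ref{thm1} already forces $u^-(P)=1$ in case~(1) and $u^-(P)=2$ in case~(2). Thus only the inequalities $C(K^{alt}(P)) \ge 1$ and $C(K^{alt}(P)) \ge 2$ remain to be proven. Since an $\ri^{\pm}$-move on $P$ corresponds to a Reidemeister~I move on any alternating diagram $D_P$, and Reidemeister~I preserves both the knot type and the crosscap number, I may replace $P$ by a representative of its $\langle\cdot\rangle$-class and assume $P$ itself lies in $\mathcal{T}$, $\mathcal{R}$, $\mathcal{P}$, or $\mathcal{T}\sharp\mathcal{T}$.

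For part~(1), $P\in\mathcal{T}$ is a $(2,2l-1)$-torus knot projection with $l\ge 2$, so $K^{alt}(P)$ is a nontrivial $(2,2l-1)$-torus knot; therefore $C(K^{alt}(P))\ge 1$.

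For part~(2), the plan is to apply the Adams--Kindred algorithm (Fact~\ref{AKthm}) to the standard alternating diagram $D_P$ in each of the three families. A case-by-case inspection should show that the algorithm outputs a state surface $\Sigma$ of Euler characteristic $-1$: in the rational and torus-$\sharp$-torus cases the minimum $m$-gon is always a bigon, and successive bigon splices terminate in a state with two state circles; in the pretzel case the branching step of Fact~\ref{AKthm} is triggered by a triangle, but both branches end in state surfaces of the same Euler characteristic $-1$. Since the Adams--Kindred algorithm realises the minimum of $(1-\chi(\Sigma))/2$ over all spanning surfaces, namely $\min\bigl(g(K^{alt}(P)),\,C(K^{alt}(P))/2\bigr)$, this minimum is forced to equal $1$. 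Combined with $C(K^{alt}(P))\le 2$ from Theorem~\ref{thm2}, the options $C=0$ and $C=1$ are now excluded: $C=0$ would force $K^{alt}(P)$ to be the unknot, contradicting $u^-(P)=2$, while $C=1$ would by Clark's theorem force $K^{alt}(P)$ to be a $(2,p)$-torus knot, whose minimum spanning surface genus is $1/2$ and not $1$. Hence $C(K^{alt}(P))=2$.

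The main obstacle is the case-by-case execution of the Adams--Kindred algorithm on each of the three families, especially in the pretzel family where the triangle-branching of step~(2) must be tracked carefully. The state surface $\Sigma_u$ of Notation~\ref{not4}, obtained from any splice sequence realising $u^-(P)$, satisfies $\chi(\Sigma_u)=1-u^-(P)=-1$ and furnishes a convenient independent witness for this Euler characteristic; one expects that in each family the algorithm's output agrees with $\Sigma_u$ up to $\chi$, which both closes the Euler characteristic bookkeeping and confirms the minimality invoked above.
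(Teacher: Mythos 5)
Your reduction to representatives in $\mathcal{T}$, $\mathcal{R}$, $\mathcal{P}$, $\mathcal{T}\sharp\mathcal{T}$ and your part (1) are fine, and part (1) is even more elementary than the paper's argument: $C(K^{alt}(P))\le u^-(P)=1$ by Theorems~\ref{thm2} and \ref{thm1}, and $C\ge 1$ because a $(2,2l-1)$-torus knot with $l\ge 2$ is nontrivial. Note that your overall route for part (2) is genuinely different from the paper's: the paper does not execute the Adams--Kindred algorithm on the three families, but instead argues that the surface $\Sigma_u$ of Notation~\ref{not4} is itself one of the algorithm's outputs of maximal Euler characteristic and then applies Lemma~\ref{lemma0}.

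The gap is that in part (2) the entire lower bound $C(K^{alt}(P))\ge 2$ rests on the unexecuted claim that the algorithm of Fact~\ref{AKthm} has minimal output of Euler characteristic $-1$ on each family, and your sketch of that computation is incorrect. A state with two state circles on an $n(P)$-crossing diagram has $\chi=2-n(P)$, which equals $-1$ only when $n(P)=3$; moreover, on these diagrams closing a bigon caps off part of a twist region and creates monogons, so the subsequent steps of the algorithm are monogon splices rather than ``successive bigon splices,'' and the circle count behaves quite differently from what you describe (on the $(2,2l-1)$-torus diagram the correct run gives $2l-1$ circles and $\chi=0$ --- which is exactly why those knots have crosscap number one --- so the analogous bookkeeping for $\mathcal{R}$, $\mathcal{P}$, and $\mathcal{T}\sharp\mathcal{T}$ must be redone with care). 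There is also a logical direction problem: exhibiting one branch with $\chi=-1$ only shows $\min\bigl(g(K),C(K)/2\bigr)\le 1$, which does not exclude $C=1$; to force $C\ge 2$ you must show that \emph{no} branch of the algorithm (for any admissible choice of smallest $m$-gon) produces a state surface with $\chi\ge 0$, i.e.\ that the minimum over all outputs is exactly $-1$. That ``all branches'' verification across the three families is the real content of the lemma and is missing; your closing remark that ``one expects the algorithm's output agrees with $\Sigma_u$ up to $\chi$'' is precisely the key claim the paper uses, stated but not proved, so as written the proposal does not close the argument.
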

\begin{proof}
Note that the minimal genus algorithm of Fact~\ref{AKthm} gives a surface $\Sigma_0$ that spans $K^{alt}(P)$ and has the maximal Euler characteristic $\chi(\Sigma_0)$.  

Suppose that 
$P \in \langle {\mathcal{T}} \rangle$.  Then, the set of alternating knot diagrams obtained from $P$ is fixed.   
Note that a state surface $\Sigma_u$ obtained from the computation of $u^-(P)$ is one of the minimal genus algorithm of Fact~\ref{AKthm} giving $\Sigma_0$.   Then, $\chi(\Sigma_0) = \chi(\Sigma_u)$.   By Lemma~\ref{lemma0}, 
$C(K^{alt}(P))=u^-(P)$.   Further, by Theorem~\ref{thm1}, $u^-(P)=1$.  
Then, we have (1).  

By replacing the assumption $P \in \langle {\mathcal{T}} \rangle$ with
\begin{equation*}\label{condition_2}
P \in \langle \mathcal{R} \rangle \cup \langle {\mathcal{P}} \rangle \cup \langle {\mathcal{T}} \rangle \sharp \langle {\mathcal{T}} \rangle,    
\end{equation*}
and by the same argument, we have (2).  
\end{proof}

\section{Alternating knots with crosscap number one revisited}\label{sec5}
As an application of Theorems~\ref{thm1} and  \ref{thm2}, it gives an elementary proof of a known result that for any alternating knot $K$, $C(K)=1$ if and only if $K$ is a $(2, 2l-1)$-torus knots ($l \ge 2$), as shown in Proposition~\ref{fact2}.    Before proving Proposition~\ref{fact2}, we need preliminary results.  
Note that Adams and Kindred obtain \cite[Corollary~6.1]{AK}.   Here, we use an expression \cite[Theorem~3.3]{KL} of \cite[Corollary~6.1]{AK}.  Note also Fact~\ref{w_fact}.           
\begin{fact}[an expression of Corollary~6.1 of \cite{AK}]\label{factKL}
Let $K$ be an alternating knot, $C(K)$ the crosscap number of $K$, and $g(K)$ the orientable genus of $K$.  
Let $Y$ be the set of state surfaces with maximal Euler characteristics obtained from the minimal genus algorithm as in Fact~\ref{AKthm}.  \\
Then, 
\begin{enumerate}
\item If there exists $\Sigma$ $(\in Y)$ that is a non-orientable, then $C(K)$ $=$ $1-\chi(\Sigma)$.   \label{case1}
\item If every $\Sigma$ $(\in Y)$ is orientable, then $C(K)$ $=$ $2-\chi(\Sigma)$ and $C(K)$ $=$ $2g(K)+1$.  \label{case2}
\end{enumerate}
\end{fact}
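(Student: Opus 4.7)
The plan is to derive Fact~\ref{factKL} from Fact~\ref{AKthm} by unpacking the Adams--Kindred convention of genus. For a connected surface $\Sigma$ with one boundary component, $\chi(\Sigma) = 1 - 2g(\Sigma)$ in the orientable case and $\chi(\Sigma) = 1 - C(\Sigma)$ in the non-orientable case. Thus the Adams--Kindred genus, which equals $g(\Sigma)$ or $C(\Sigma)/2$ respectively, always takes the common value $\tfrac{1}{2}(1-\chi(\Sigma))$. Minimizing it amounts to maximizing $\chi(\Sigma)$, and by Fact~\ref{AKthm} the algorithm produces a state surface realizing this maximum over all spanning surfaces of $K$ (orientable or not).

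For part (1), assume some $\Sigma \in Y$ is non-orientable. For any non-orientable spanning surface $\Sigma'$ of $K$, the minimality of $\tfrac{1}{2}(1-\chi(\Sigma))$ yields $1 - \chi(\Sigma') \ge 1 - \chi(\Sigma)$. Hence, by Definition~\ref{dfn_cross}, $C(K) = \min_{\Sigma'}(1-\chi(\Sigma')) = 1 - \chi(\Sigma)$, the infimum being attained by $\Sigma$ itself.

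For part (2), assume every $\Sigma \in Y$ is orientable. Each such $\Sigma$ has orientable genus at least $g(K)$ on the one hand, and at most the Adams--Kindred minimum on the other; since a Seifert state surface of the alternating diagram realizes $g(K)$ by Fact~\ref{w_fact}, the Adams--Kindred minimum equals $g(K)$, so $\chi(\Sigma) = 1 - 2g(K)$ and $2 - \chi(\Sigma) = 2g(K)+1$. Clark's inequality gives $C(K) \le 2g(K)+1$. Conversely, Adams--Kindred minimality applied to non-orientable spanning surfaces gives $C(K)/2 \ge g(K)$, hence $C(K) \ge 2g(K)$. The borderline case $C(K) = 2g(K)$ is ruled out by appealing to the structural content of Adams--Kindred's algorithm: a non-orientable spanning surface with $\chi = 1-2g(K)$ would itself be Adams--Kindred-minimal, and any such minimum can be realized (up to the same $\chi$) as an output of some branch of the algorithm, contradicting the standing hypothesis on $Y$.

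The main obstacle is precisely this last point, which lies beyond the statement of Fact~\ref{AKthm} taken verbatim. It amounts to exhaustiveness of the branching structure in the minimum genus algorithm, namely that if a non-orientable minimum Adams--Kindred-genus state surface exists, then some branch finds one. Establishing it cleanly requires revisiting the combinatorial analysis in Adams--Kindred's proof of their Corollary~6.1 rather than a black-box appeal to the algorithm as packaged in Fact~\ref{AKthm}.
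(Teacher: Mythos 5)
This statement is not proved in the paper at all: it is imported as a Fact, namely a restatement of Corollary~6.1 of \cite{AK} in the form given by Theorem~3.3 of \cite{KL}, so the paper's ``proof'' is the citation. Measured against that, your attempt to re-derive it internally from Fact~\ref{AKthm} is partly successful: part (1) and the upper bound in part (2) go through. Since the Adams--Kindred genus of a one-boundary spanning surface is $\tfrac{1}{2}(1-\chi)$ in both orientability classes, Fact~\ref{AKthm} does give that the algorithm's outputs maximize $\chi$ over all spanning surfaces; if that maximum is attained by a non-orientable output, Definition~\ref{dfn_cross} yields $C(K)=1-\chi(\Sigma)$, and if all outputs are orientable they are minimal genus Seifert surfaces (Fact~\ref{w_fact}), so $\chi=1-2g(K)$ and Clark's bound \cite{clark} (equivalently, adding a small half-twisted band to the Seifert surface) gives $C(K)\le 2-\chi=2g(K)+1$.

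The gap you flag at the end is, however, genuine and is exactly the nontrivial content of the statement: excluding $C(K)=2g(K)$, i.e.\ excluding a non-orientable spanning surface with $\chi$ equal to the maximum, does not follow from Fact~\ref{AKthm} used as a black box. That Fact only says the algorithm produces \emph{some} minimal genus state surface; it does not say that whenever the minimal genus is attained by a non-orientable spanning surface, some branch of the algorithm outputs a non-orientable surface of that genus. Your sentence ``any such minimum can be realized (up to the same $\chi$) as an output of some branch'' is precisely the exhaustiveness assertion that Adams--Kindred prove on the way to their Corollary~6.1, and it requires their analysis of spanning surfaces of alternating links, not just the packaged algorithm statement. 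So the derivation cannot be completed at the level of generality you set up; one must either cite \cite{AK} (as the paper does) or reproduce that portion of their argument.
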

We also prepare the following technical lemma. 
\begin{lemma}\label{lemma4}
Let $P$ be a knot projection that is the image of a generic immersion $f: S^1$ $\to$ $S^2$.      
For every pair of two double points $d, d'$ of $P$, the configuration of $f^{-1}(\{ d, d' \})$ on $\subset S^1$ is one of two types $(a)$ and $(b)$ on $S^1$.   
In other words, any pair of two double points are represented by Figure~\ref{connection3}~$(a)$ or $(b)$ where dotted curves indicate the connections of double points.  
\begin{figure}[h!]
\includegraphics[width=8cm]{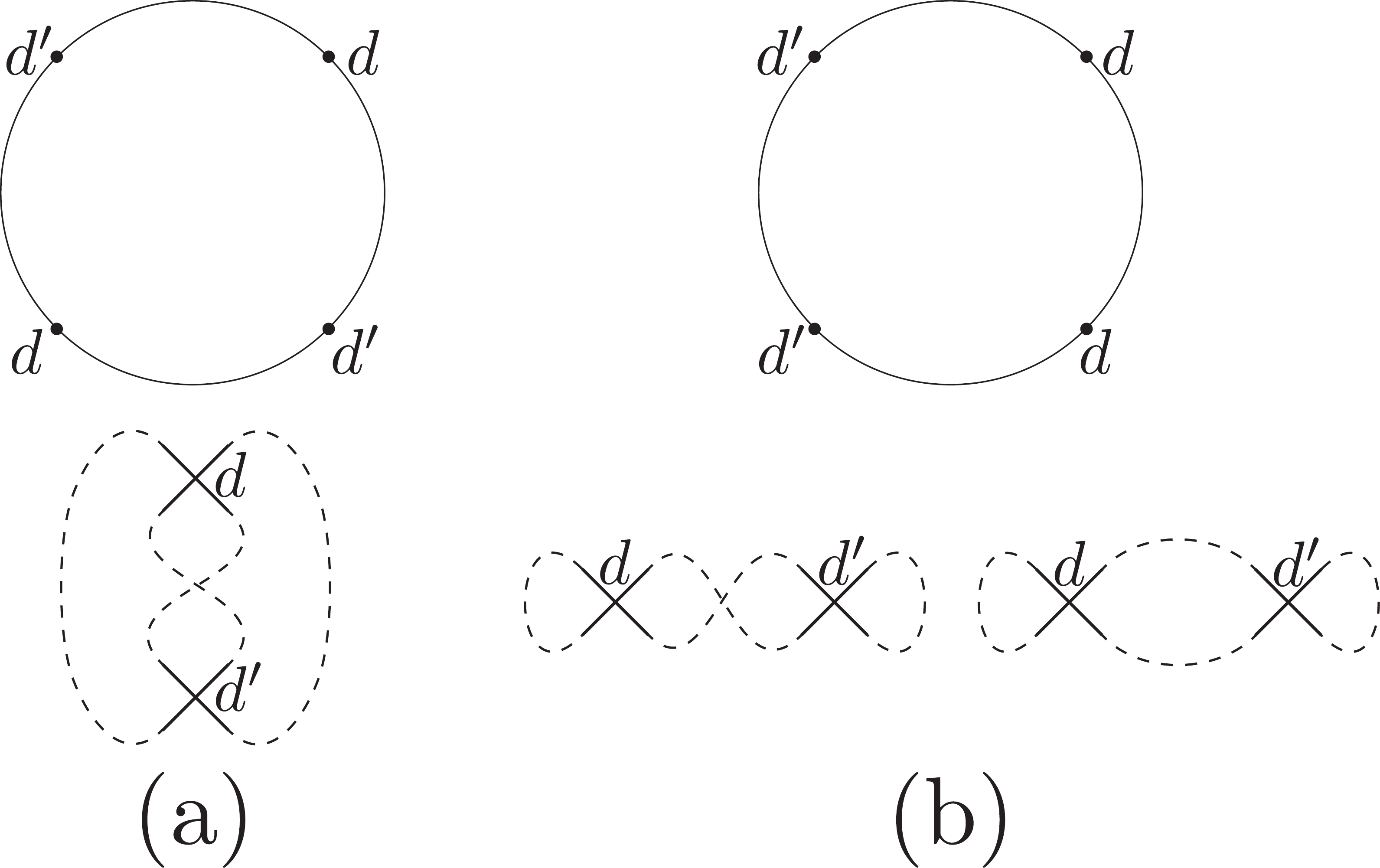}
\caption{In the upper line, the configuration of preimages of double points $d$ and $d'$.    
In the lower line, (a) : the leftmost knot projection and (b) : the two knot projections in the right half.  
Two double points and their connections.  Dotted curves indicate the connections of double points.}
\label{connection3}
\end{figure}
\end{lemma}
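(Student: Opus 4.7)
The plan is to argue purely combinatorially about the four preimages on $S^{1}$. First, I would set $f^{-1}(d)=\{p_{1},p_{2}\}$ and $f^{-1}(d')=\{q_{1},q_{2}\}$; because $d\ne d'$ and $f$ is a generic immersion, these four preimages are four distinct points on $S^{1}$. Orient $S^{1}$ and read off the cyclic word in the alphabet $\{p,q\}$ obtained by recording only whether a given preimage lies over $d$ or over $d'$ (swapping $p_{1}\leftrightarrow p_{2}$ or $q_{1}\leftrightarrow q_{2}$ does not change the chord diagram on $S^{1}$, so this loss of information is harmless).

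Next, I would enumerate the admissible cyclic words. The four positions of $S^{1}$ carry labels from the multiset $\{p,p,q,q\}$, which gives $\binom{4}{2}=6$ linear sequences. Quotienting by the cyclic rotation of $S^{1}$ and by reversal of its orientation partitions these six sequences into exactly two equivalence classes, represented by $ppqq$ and $pqpq$. Geometrically, $ppqq$ says that the chord joining $p_{1}$ to $p_{2}$ and the chord joining $q_{1}$ to $q_{2}$ can be drawn disjointly inside the disk bounded by $S^{1}$; this is configuration (a). The word $pqpq$ says the two chords must cross; this is configuration (b).

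Finally, I would point out that the lemma concerns only the chord configuration on the source circle $S^{1}$, not its planar realization: the two distinct knot projections drawn in the right half of Figure~\ref{connection3} are just two different immersions in $S^{2}$ of the same interleaved chord diagram $pqpq$, and hence do not yield a third type. The only substantive step is the enumeration of the cyclic words under the symmetry group, but this is elementary and is the only potential source of error; I do not expect any real obstacle.
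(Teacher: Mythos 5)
Your enumeration of the chord configurations is the right idea, and in fact it is exactly the content of the paper's own one-line proof (the paper simply observes that $P$ is a one-component curve, so the four preimage points on $S^1$ admit only the two cyclic patterns $ppqq$ and $pqpq$). The counting step itself is fine: the six placements of $\{p,p,q,q\}$ on the circle fall into the two cyclic classes $ppqq$ (unlinked chords) and $pqpq$ (interleaved chords).

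The genuine problem is your identification of these classes with the labels $(a)$ and $(b)$ of Figure~\ref{connection3}: you have them reversed, and your explanation of why the figure shows two pictures under $(b)$ is incorrect. In the paper, type $(a)$ must be the \emph{interleaved} diagram $pqpq$. This is forced by Lemma~\ref{connection} and its later use: for a type-$(a)$ pair, a Seifert splice at one double point followed by \emph{any} splice at the other is asserted to return a one-component curve, and this happens precisely when the two chords interleave (the Seifert splice splits the curve into two circles, and the second double point joins the two circles exactly when its preimages lie on different circles, i.e., when the chords are linked); for an unlinked pair $ppqq$ one gets two or three circles, never a knot projection. Correspondingly, the two knot projections in the right half of the figure, i.e., type $(b)$, are the two spherical realizations of the \emph{non}-interleaved diagram $ppqq$ (nested kinks versus side-by-side kinks), not ``two different immersions of the same interleaved chord diagram $pqpq$'' as you claim; the interleaved diagram has essentially a unique local realization and is the single leftmost picture $(a)$. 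Since the subsequent arguments (Lemma~\ref{connection} and the component-counting in the proofs of Proposition~\ref{fact2} and Theorem~\ref{corollary1}) depend on which configuration is type $(a)$, this mislabeling is not cosmetic and needs to be corrected, even though the underlying dichotomy you prove is the correct and intended one.
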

\begin{proof}
Every knot projection is a $1$-component curve, and thus, the possibilities of connections are shown in Figure~\ref{connection3}.   
\end{proof}
\begin{lemma}\label{connection}
If there exist two double points as in Figure~\ref{connection3}~$(a)$, then, after a Seifert splice at one of the two double point, any splice at the other double point yields another knot projection.      
\end{lemma}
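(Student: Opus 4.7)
The plan is to show that a Seifert splice at $d$ transforms the configuration into a two-component curve in which $d'$ becomes a crossing shared between the two components, and then to observe that either splice at such a crossing must fuse the two components back into one.

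First, I would fix notation by letting $f^{-1}(d) = \{p,q\}$ and $f^{-1}(d') = \{p',q'\}$. Type $(a)$ says the four preimages occur on $S^1$ in the alternating cyclic order $p, p', q, q'$, dividing $S^1$ into four oriented arcs $a\colon p \to p'$, $b\colon p' \to q$, $c\colon q \to q'$, $e\colon q' \to p$. Second, I would recall how a splice acts on the preimage: it removes the identifications of the four local ends at the crossing and re-identifies them in one of the two planar non-crossing ways. The orientation-preserving option is the Seifert splice, which at $d$ pairs the ``after'' end at $p$ with the ``before'' end at $q$, and the ``before'' end at $p$ with the ``after'' end at $q$; equivalently, it joins $a$-start to $b$-end and $e$-end to $c$-start.

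Third, I would trace the resulting 1-manifold. One component is the loop obtained by starting in $a$, passing through the still-present crossing at $d'$ via the preimage $p'$, continuing along $b$, and returning through the new arc at $d$; this component uses only the arcs $a$ and $b$ and meets $d'$ only at $p'$. The other component is the analogous loop $c \cup e$, meeting $d'$ only at $q'$. Thus the Seifert splice produces exactly two components, and $d'$ is now a transverse crossing whose two preimages lie on \emph{different} components.

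Fourth, I would consider either splice at $d'$. Its four local ends split into two pairs, one on each component; either planar non-crossing re-identification of these four ends necessarily connects ends from distinct components, so either splice fuses the two loops into one. Since all remaining double points of $P$ are untouched and still transverse, the output is a generic immersion of $S^1$ with transverse double points, i.e., a knot projection. The main obstacle is the bookkeeping of which ``before''/``after'' ends are paired by the Seifert splice in the alternating cyclic configuration; once step three is verified, step four is automatic because a splice at any crossing between two distinct components always merges them.
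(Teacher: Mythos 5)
Your proof is correct and takes essentially the same route as the paper: the paper's proof simply cites Lemma~\ref{lemma4} and verifies by Figure~\ref{connection4} that a Seifert splice at one of the two interleaved double points splits the curve into two components which still meet at the other double point, whence either splice there re-merges them into a single closed curve. Your explicit arc-by-arc bookkeeping (tracing $a\cup b$ and $c\cup e$ and noting that $p'$ and $q'$ land on different components) is just a written-out version of what that figure shows, so there is no gap and nothing genuinely different to compare.
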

\begin{proof}
By Lemma~\ref{lemma4} and Figure~\ref{connection4}, it is easy to see the claim.  
\end{proof}
\begin{figure}[h!]
\includegraphics[width=8cm]{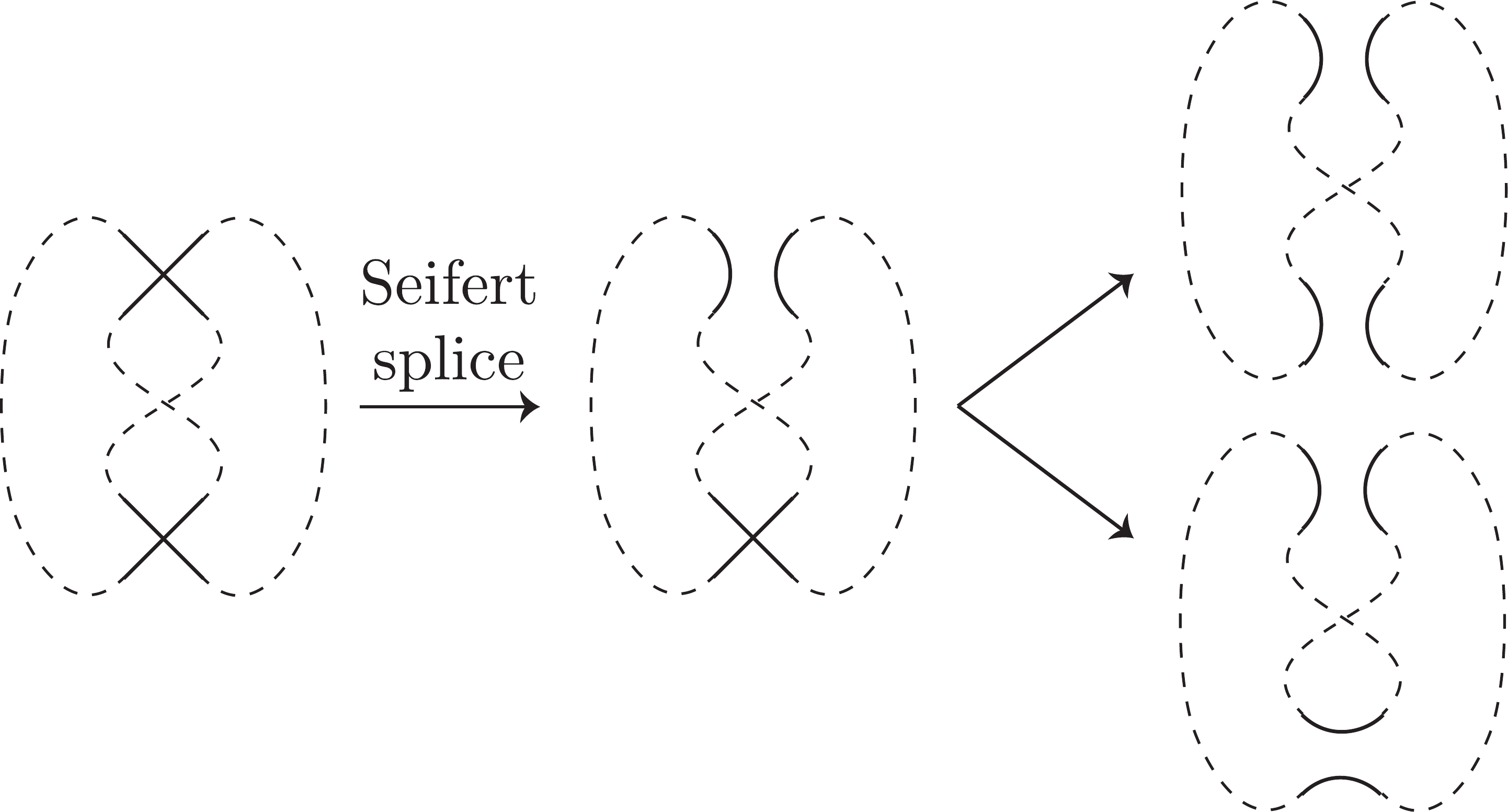}
\caption{Two Seifert splices on the two double points (upper arrow) and one Seifert splice and the other splice on the two double points (lower arrow)}\label{connection4}
\end{figure}
\begin{lemma}\label{eq4}
\begin{align*}
\langle {\mathcal{T}} \rangle
= 
\{P~|~C(K^{alt}(P)) = u^-(P)=1~(\forall P)\}.    
\end{align*}
\end{lemma}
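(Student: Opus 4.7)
The plan is to prove the equality of sets by showing both inclusions, each of which follows almost immediately from results already established in the excerpt.

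For the inclusion $\langle \mathcal{T} \rangle \subseteq \{P~|~C(K^{alt}(P)) = u^-(P)=1\}$, I would take any $P \in \langle \mathcal{T} \rangle$ and directly invoke Lemma~\ref{lemma3}$(1)$, which gives $C(K^{alt}(P)) = u^-(P) = 1$. No further work is needed beyond quoting that result, since it was precisely set up to handle this hypothesis.

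For the reverse inclusion, I would take any $P$ with $C(K^{alt}(P)) = u^-(P) = 1$. In particular $u^-(P) = 1$, so by Theorem~\ref{thm1}$(1)$ we get $P \in \langle \mathcal{T} \rangle$ immediately. Note that in this direction the condition $C(K^{alt}(P))=1$ is in fact not used; the equality $u^-(P)=1$ alone already forces membership in $\langle \mathcal{T} \rangle$ via the classification theorem. This asymmetry is worth pointing out in the write-up but causes no difficulty.

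There is essentially no obstacle here: the lemma is a packaging statement combining Theorem~\ref{thm1}$(1)$ (classification of $u^-(P)=1$) with Lemma~\ref{lemma3}$(1)$ (crosscap realization for $\langle \mathcal{T} \rangle$). The real content was already done: the forward crosscap computation used the Adams--Kindred minimal genus algorithm via Fact~\ref{AKthm} inside Lemma~\ref{lemma3}, and the structural classification of $\langle \mathcal{T} \rangle$ used Move~\ref{lemma2} inside Theorem~\ref{thm1}. So the proof reduces to two short sentences, one per inclusion, referencing those two earlier statements.
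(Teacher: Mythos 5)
Your proposal is correct and matches the paper's own proof exactly: the paper establishes $\langle \mathcal{T} \rangle \subset \{P~|~C(K^{alt}(P)) = u^-(P)=1\}$ by Lemma~\ref{lemma3}~(1) and the reverse inclusion by Theorem~\ref{thm1}~(1), just as you do. Your remark that only $u^-(P)=1$ is needed in the reverse direction is accurate and consistent with the paper's argument.
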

\begin{proof}
For a knot projection $P$, let $K^{alt}(P)$ be a knot type as in Notation~\ref{not2}.    Then, 
\begin{align*}
\langle {\mathcal{T}} \rangle
&\stackrel{\textrm{Lemma~\ref{lemma3}~(1)}}{\subset}
\{P~|~C(K^{alt}(P)) = u^-(P)=1~(\forall P)\}\\
& \stackrel{\textrm{Theorem~\ref{thm1}~(1)}}{\subset} \langle {\mathcal{T}} \rangle.  
\end{align*}
\end{proof}
\begin{lemma}\label{eq5}  
\begin{align*}
{\mathcal{T}_{\kk}} = \{ K~{\textrm{: an alternating knot}}~|~C(K)=1, u^-(K)=1 \}.   
\end{align*}
\end{lemma}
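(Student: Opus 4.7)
The plan is to derive this lemma essentially as the ``knot-level'' repackaging of Lemma~\ref{eq4}, using the invariance of $K^{alt}$ under $\ri^{\pm}$ moves together with Theorem~\ref{thm1}~(1) and Lemma~\ref{lemma3}~(1).

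For the inclusion $\mathcal{T}_{\kk} \subseteq \{K : C(K) = u^-(K) = 1\}$, I would fix a $(2,2l-1)$-torus knot $K$ and take its standard alternating diagram; the underlying projection $P$ lies in $\mathcal{T}$. By Theorem~\ref{thm1}~(1), $u^-(P) = 1$, so by Definition~\ref{def_uk}, $u^-(K) \le 1$. Since $K$ is nontrivial we have $u^-(K) \ge 1$ (otherwise some alternating projection would admit a sequence consisting only of $\ri^-$ moves, forcing $K$ to be the unknot). Hence $u^-(K) = 1$. Lemma~\ref{lemma3}~(1) applied to this same $P$ then gives $C(K^{alt}(P)) = 1$, i.e., $C(K) = 1$.

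For the converse, I would start with an alternating knot $K$ satisfying $C(K) = u^-(K) = 1$. The hypothesis $u^-(K) = 1$ furnishes, via Definition~\ref{def_uk}, an alternating knot projection $P \in Z(K)$ with $u^-(P) = 1$. Theorem~\ref{thm1}~(1) then places $P \in \langle \mathcal{T} \rangle$, so there is a finite sequence of $\ri^{\pm}$ moves taking $P$ to some $P_0 \in \mathcal{T}$. Since each $\ri^{\pm}$ move on an alternating projection corresponds to adding or removing a nugatory crossing in an alternating diagram, we have $K^{alt}(P) = K^{alt}(P_0)$. But $K^{alt}(P_0)$ is by definition a $(2,2l-1)$-torus knot, so $K \in \mathcal{T}_{\kk}$.

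The main (mild) subtlety is justifying that RI-equivalence of knot projections does not alter the alternating knot type, which is where the passage from Lemma~\ref{eq4} (a statement about projections) to Lemma~\ref{eq5} (a statement about alternating knots) really happens; this can be handled in one line by observing that a monogon on an alternating diagram contains a nugatory crossing. Once that is noted, everything else is bookkeeping: the forward direction is a direct application of Lemma~\ref{lemma3}~(1), and the reverse direction is Theorem~\ref{thm1}~(1) combined with the $\ri^{\pm}$-invariance just described. No additional computation of crosscap numbers is needed beyond what is already encapsulated in Lemma~\ref{lemma3}~(1).
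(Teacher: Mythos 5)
Your proposal is correct and takes essentially the same route as the paper: the paper's proof applies $K^{alt}$ to both sides of Lemma~\ref{eq4} (whose content is exactly Theorem~\ref{thm1}~(1) together with Lemma~\ref{lemma3}~(1)) and passes to knots via Definition~\ref{def_uk}, leaving the $\ri^{\pm}$-invariance of the alternating knot type implicit. You simply unpack that lemma into its ingredients and make the nugatory-crossing/RI-invariance point explicit, which is a faithful (and slightly more detailed) version of the same argument.
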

\begin{proof}
Note that $P$ uniquely determines an alternating knot diagram (up to reflection).  For Lemma~\ref{eq4}, the left-hand side $\langle {\mathcal{T}} \rangle$  determines $\{ K^{alt}(P)~|~P \in \langle {\mathcal{T}} \rangle \}$, which equals $ \mathcal{T}_{\kk}$.   On the other hand, the right-hand side $\{P~|~C(K^{alt}(P))$ $=$ $u^-(P)=1~(\forall P)\}$ determines $\{K^{alt}(P)$ $|$ $C(K^{alt}(P))$ $=$ $u^-(P)=1$ $(\forall P)\}$, which equals $\{ K :$ an alternating knot $|~C(K)=1,$ $u^-(K)=1 \}$ (cf.~Definition~\ref{def_uk}).
\end{proof}
\begin{proposition}\label{fact2}
Let $\mathcal{T}_{\kk}$ be the set as in Notation~\ref{not3}.  
Let $K$ be an alternating knot and $C(K)$ the crosscap number of $K$.  Let $u^- (K)$ be the integer as in Definition~\ref{def_uk}.  
Then, the following conditions are mutually equivalent.  

\noindent$(A)$ $K$ $\in \mathcal{T}_{\kk}$.


\noindent $(B)$ $C(K)=1$.  

\noindent $(C)$ $u^-(K)  =1$.
\end{proposition}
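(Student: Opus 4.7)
The plan is to use Lemma~\ref{eq5} as the backbone --- it already packages the equivalence $(A) \Leftrightarrow (B) \wedge (C)$ --- and then fill in the two asymmetric reverse implications $(C) \Rightarrow (A)$ and $(B) \Rightarrow (A)$, from which the full equivalence of $(A)$, $(B)$, $(C)$ follows.

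For $(C) \Rightarrow (A)$: if $u^-(K) = 1$ then by Definition~\ref{def_uk} some alternating projection $P$ of $K$ satisfies $u^-(P) = 1$, and Theorem~\ref{thm1}(1) places $P$ in $\langle \mathcal{T} \rangle$. Since each $\ri^{\pm}$ move on a knot projection corresponds to a Reidemeister~I move on any compatible alternating diagram and so preserves both knot type and the alternating property, $K = K^{alt}(P) \in \mathcal{T}_{\kk}$. Chaining with $(A) \Rightarrow (B)$ from Lemma~\ref{eq5} gives $(C) \Rightarrow (B)$ at no extra cost.

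The substantive step is $(B) \Rightarrow (A)$. Assume $C(K) = 1$. Since $K$ is not the unknot (which has $C = 0$), case (2) of Fact~\ref{factKL} cannot hold: it would force $C(K) = 2g(K) + 1 = 1$, hence $g(K) = 0$, hence $K$ is the unknot, a contradiction. So case (1) of Fact~\ref{factKL} applies, and for an alternating projection $P$ of $K$ the Adams--Kindred algorithm (Fact~\ref{AKthm}) produces a non-orientable state surface $\Sigma_\sigma(D_P)$ with $\chi(\Sigma_\sigma) = 1 - C(K) = 0$, equivalently $|S_\sigma|$ equals the number of double points of $P$. My aim is then to convert $\sigma$ into an ordered splice sequence from $P$ to $O$ using only $\ri^-$ and $S^-$ operations, with exactly one $S^-$; this forces $u^-(P) \le 1$, which combined with $u^-(P) \ge C(K) = 1$ from Theorem~\ref{thm2} gives $u^-(P) = 1$, hence $u^-(K) = 1$, hence $K \in \mathcal{T}_{\kk}$ by the already-established $(C) \Rightarrow (A)$.

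The main obstacle is this last conversion --- a partial converse to the state-from-sequence construction in the proof of Theorem~\ref{thm2}. The plan is to use Lemma~\ref{lemma4}'s classification of pairwise configurations of double points of $P$ together with the admissibility guarantee of Lemma~\ref{connection} to schedule the splices dictated by $\sigma$: splicing out nugatory crossings in an order that preserves the knot projection at each intermediate step, each contributing an $\ri^-$, and reserving for last the single crossing whose splice is forced to be $S^-$. The accounting in Theorem~\ref{thm2}'s proof then gives $\chi(\Sigma_\sigma) = 1 - s$ where $s$ is the number of $S^-$ operations, forcing $s = 1$ and hence $u^-(P) = 1$.
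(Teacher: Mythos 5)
Your overall architecture matches the paper's: $(A)\Rightarrow(B),(C)$ via Lemma~\ref{eq5}, $(C)\Rightarrow(A)$ via Definition~\ref{def_uk} and Theorem~\ref{thm1}~(1), exclusion of case~(2) of Fact~\ref{factKL} by the parity/genus argument, and, in case~(1), conversion of the Adams--Kindred state with $\chi=0$ into a splice sequence containing exactly one $S^-$, so that $u^-(P)\le 1$ and Theorem~\ref{thm2} forces equality. The Euler characteristic accounting you cite is correct.

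The gap is in the one substantive step, the conversion itself. Your schedule --- ``splicing out nugatory crossings first, each contributing an $\ri^-$, and reserving for last the single crossing whose splice is forced to be $S^-$'' --- cannot be carried out in general. First, the last splice of any sequence ending at the simple closed curve $O$ acts on a one-double-point projection, and the component-preserving splice there is necessarily $\ri^-$; so the $S^-$ can never be the final operation. Second, and more seriously, the crossings destined to be $\ri^-$'d need not be nugatory before the $S^-$ is performed: for the standard $(2,2l-1)$-torus projection (already the trefoil projection) there are no monogons at all, so no $\ri^-$ is available initially and every $u^-$-realizing sequence must \emph{begin} with the $S^-$, the monogons being created only afterwards. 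This is exactly why the paper's proof places the single $S^-$ at an arbitrary position $i$ in the sequence, with $\ri^-$'s both before and after it, and why it needs Lemma~\ref{connection} (the type~(a)/(b) dichotomy of Figure~\ref{connection3}) applied both to pairs among the pre-$S^-$ double points and to pairs mixing pre- and post-$S^-$ double points, to justify that each splitting splice of the state can be realized as an $\ri^-$ on an honest knot projection at the moment it is applied. Your appeal to Lemmas~\ref{lemma4} and \ref{connection} points at the right tools, but the proposed ordering is backwards in an essential way, so as written the argument that $u^-(P)\le 1$ does not go through.
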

\begin{proof}
\noindent (Proof of (A) $\Leftrightarrow$ (B).)   
Lemma~\ref{eq5} immediately implies that (A) $\Rightarrow$ (B).  

\noindent((B) $\Rightarrow$ (A).) 
Suppose that $C(K)=1$ and $K$ is an alternating knot. By definition, there exist an alternating knot diagram $D^{alt}(K)$ of $K$.  Let $P$ be a knot projection obtained from $D^{alt}(K)$ by ignoring over/under information of the double points.  
By Fact~\ref{factKL}, we have the following Case~1 and Case~2 corresponding to (\ref{case1}) and (\ref{case2}) of Fact~\ref{factKL}, respectively.    

\noindent $\bullet$ Case~1: there exist an alternating knot diagram $D^{alt}(K)$ of $K$, a state $s$ of $D^{alt}(K)$ such that a non-orientable state surface $\Sigma_0$ obtained from $D^{alt}(K)$ satisfies that  $C(K)=1-\chi(\Sigma_0)$, i.e., $\chi(\Sigma_0)$ $=$ $0$.  Here, note that the state $s$ is given by the algorithm of Fact~\ref{AKthm}.  
Let $n(P)$ be the number of double points of $P$.  The state $s$ is obtained from $P$ by $n(P)$ splices.  In the following, we find the state $s$ in the $2^{n(P)}$ candidates.  
Then, note that the splices consist of $n(P)-1$ Seifert splices producing $n(P)$-component curves and a single $S^-$ since $\chi(\Sigma_0)=0$.  Here, note that if there exist two splices of type $S^-$ in the $n(P)$ splices, then the $n(P)$ splices do not realize $\Sigma_0$ because $\chi(\Sigma_0)$ $=$ $1-C(K)$.   
Then, we interpret the $n(P)$ splices as a sequence of the $n(P)$ splices, and we may suppose that there exists a sequence such that
\[
P=P_0 \stackrel{Op_1}{\to} P_1 \stackrel{Op_2}{\to} P_2 \stackrel{Op_3}{\to} \dots \stackrel{Op_{n(P)}}{\to} P_{n(P)} = s  
\]
and $Op_i = S^-$ ($1 \le i \le n(P)$).  
By Lemma~\ref{connection}, for the double points corresponding to $Op_k$ $(1 \le k \le i-1)$, any two double points are represented as in Figure~\ref{connection3}~(b).  Here, if there exists a pair of type (a), then a pair consisting of two splices containing a Seifert splice on the two double points sends a $1$-component curve to another $1$-component curve, which implies the contradiction with the condition that $n(P)-1$ Seifert splices produce $n(P)$-component curves.  
Similarly, by Lemma~\ref{connection}, for two double points corresponding to $Op_k$ $(1 \le k \le i-1)$ and $Op_j$ $(i+1 \le j \le n(P))$, any pair is also represented as in Figure~\ref{connection3}~(b).  
Thus, noting that the state $s$ has one to one correspondence with the $n(P)$ splices (Definition~\ref{state}), it is easy to choose $\sigma_1,$ $\sigma_2, \dots, \sigma_{i-1}$ ($\sigma_{i+1},$ $\sigma_{i+2}, \dots, \sigma_{n(P)}$,~resp.)  like $\Sigma_u$ (Notation~\ref{not4})  corresponding to $\ri^-$'s applied successively to $P$ ($P_{i}$,~resp.) to obtain $P_{i-1}$ ($s$,~resp.).  Here, by Lemma~\ref{connection}, note that two double points corresponding to $\sigma_{k}$ $(1 \le k \le i-1)$ and $S^-$ are the configuration of  type~(b) of  Figure~\ref{connection3}.      
Then, $u^-(P) \le 1$.  Here, $K$ is not the unknot, $1 = C(K) \le u^-(P)$ ($\because$ Theorem~\ref{thm2}).  Thus, $C(K)=1$ and $u^- (P)=1$, where $P$ is a knot projection obtained from $D^{alt}(K)$.  Then, by Lemma~\ref{eq5}, we have $K \in {\mathcal{T}}_{\kk}$, which implies (A).   

\noindent $\bullet$ Case~2: For an orientable genus $g(K)$, $C(K)$ $=$ $2g(K)+1$.  If $C(K)=1$, then $g(K)=0$.  Then, $K$ is the unknot, which implies a contradiction.  

\noindent((A) $\Leftrightarrow$ (C).)
Since Lemma~\ref{eq5} immediately implies that (A) $\Rightarrow$ (C), it is sufficient to shown that (C) $\Rightarrow$ (A).  
Recall that $u^- (K)$ $=$ $\min_{P \in Z(K)} u^-(P)$ (Definition~\ref{def_uk}).  Then, 
\begin{align*}
& u^-(K)  =1 \\
\Rightarrow& \exists P \in Z(K)~{\textrm{such that}}~P \in \langle {\mathcal{T}} \rangle \quad(\because~{\rm{Theorem~\ref{thm1}~(1)}})\\
\Rightarrow& K \in {\mathcal{T}_{\kk}}.
\end{align*}   
\end{proof}
\section{Alternating knots with crosscap number two}\label{sec6}
By Theorems~\ref{thm1} and \ref{thm2}, we determine alternating knots with crosscap number two (Theorem~\ref{corollary1}).  
\begin{lemma}\label{eq6}
\begin{align*}
\langle {\mathcal{R}} \rangle \cup \langle {\mathcal{P}} \rangle \cup \langle {\mathcal{T}} \rangle \sharp \langle {\mathcal{T}} \rangle  
= 
\{P~|~C(K^{alt}(P)) = u^-(P)=2~(\forall P)\}.  
\end{align*}
\end{lemma}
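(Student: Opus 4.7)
The plan is to mirror exactly the two-line chain of inclusions used in the proof of Lemma~\ref{eq4}, since all of the heavy lifting has already been done by Theorem~\ref{thm1}~(2) and Lemma~\ref{lemma3}~(2). No new technical idea should be required.

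For the forward inclusion $\langle {\mathcal{R}} \rangle \cup \langle {\mathcal{P}} \rangle \cup \langle {\mathcal{T}} \rangle \sharp \langle {\mathcal{T}} \rangle \subseteq \{P~|~C(K^{alt}(P))=u^-(P)=2\}$, I would simply cite Lemma~\ref{lemma3}~(2): for any $P$ in the left-hand set, the minimal genus algorithm of Fact~\ref{AKthm} produces a state surface that coincides (up to Euler characteristic) with the state surface $\Sigma_u$ from Notation~\ref{not4} constructed from a sequence realizing $u^-(P)$, hence $\chi(\Sigma_0)=\chi(\Sigma_u)$, and then Lemma~\ref{lemma0} gives $C(K^{alt}(P))=u^-(P)$, while Theorem~\ref{thm1}~(2) pins this common value down to $2$.

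For the backward inclusion, let $P$ satisfy $C(K^{alt}(P))=u^-(P)=2$. Since $u^-(P)=2$, Theorem~\ref{thm1}~(2) immediately places $P$ into $\langle {\mathcal{R}} \rangle \cup \langle {\mathcal{P}} \rangle \cup \langle {\mathcal{T}} \rangle \sharp \langle {\mathcal{T}} \rangle$. In other words, the crosscap-number condition is not even needed for this direction; $u^-(P)=2$ alone suffices.

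Concretely, the proof writes itself as the chain
\begin{align*}
\langle {\mathcal{R}} \rangle \cup \langle {\mathcal{P}} \rangle \cup \langle {\mathcal{T}} \rangle \sharp \langle {\mathcal{T}} \rangle
&\stackrel{\textrm{Lemma~\ref{lemma3}~(2)}}{\subseteq}
\{P~|~C(K^{alt}(P)) = u^-(P)=2~(\forall P)\} \\
&\stackrel{\textrm{Theorem~\ref{thm1}~(2)}}{\subseteq}
\langle {\mathcal{R}} \rangle \cup \langle {\mathcal{P}} \rangle \cup \langle {\mathcal{T}} \rangle \sharp \langle {\mathcal{T}} \rangle,
\end{align*}
which is the direct analogue of the display in Lemma~\ref{eq4}. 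There is no real obstacle here; the only thing to be careful about is that Lemma~\ref{lemma3}~(2) actually gives the equality $C(K^{alt}(P))=u^-(P)=2$ for \emph{every} $P$ in the relevant union (not merely an inequality), so the first inclusion is valid as stated. The main substantive work — verifying that the minimal genus algorithm of Adams--Kindred produces precisely the state surface coming from a $u^-$-realizing sequence for rational, pretzel, and connected-sum-of-torus projections — has already been absorbed into Lemma~\ref{lemma3}, so nothing further is needed beyond citing it.
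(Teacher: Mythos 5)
Your proposal is correct and follows essentially the same argument as the paper: the same two-step chain of inclusions, using Lemma~\ref{lemma3}~(2) for the forward inclusion and Theorem~\ref{thm1}~(2) for the backward one, exactly parallel to Lemma~\ref{eq4}. Your added remark that $u^-(P)=2$ alone suffices for the backward direction is accurate and consistent with the paper's proof.
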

\begin{proof}
For a knot projection $P$, let $K^{alt}(P)$ be as in Notation~\ref{not2}.  Then, 
\begin{align*}
&\langle {\mathcal{R}} \rangle \cup \langle {\mathcal{P}} \rangle \cup \langle {\mathcal{T}} \rangle \sharp \langle {\mathcal{T}} \rangle \\
\stackrel{\textrm{Lemma~\ref{lemma3}~(2)}}{\subset}&
\{P~|~C(K^{alt}(P)) = u^-(P)=2~(\forall P)\}\\
\stackrel{\textrm{Theorem~\ref{thm1}~(2)}}{\subset}&
\{ P~|~P \in \langle {\mathcal{R}} \rangle \cup \langle {\mathcal{P}} \rangle \cup \langle {\mathcal{T}} \rangle \sharp \langle {\mathcal{T}} \rangle \}.  
\end{align*}
\end{proof}
\begin{lemma}\label{eq7}
\begin{align*}
\mathcal{R}_{\kk} \cup {\mathcal{P}}_{\kk} \cup \mathcal{T}_{\kk} \sharp \mathcal{T}_{\kk} =  \{ K~{\textrm{: an alternating knot}}~|~C(K)=2, u^-(K)=2 \}.     
\end{align*}
\end{lemma}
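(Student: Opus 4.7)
The plan is to mirror the passage from Lemma~\ref{eq4} to Lemma~\ref{eq5} in the preceding section, now promoting the projection-level identity Lemma~\ref{eq6} to the desired knot-level identity.

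First I would record the bridge between projections and alternating knots. A knot projection $P$ determines an alternating knot diagram uniquely up to reflection, and hence an alternating knot type $K^{alt}(P)$. Every $\ri^{\pm}$ applied to $P$ corresponds to a Reidemeister-I move on the associated alternating diagram, so the equivalence $\sim$ of Definition~\ref{ri_eq_notation} preserves the alternating knot type. Consequently $\{K^{alt}(P) \mid P \in \langle \mathcal{T} \rangle\} = \mathcal{T}_{\kk}$, and in the same way $\{K^{alt}(P) \mid P \in \langle \mathcal{R} \rangle\} = \mathcal{R}_{\kk}$ and $\{K^{alt}(P) \mid P \in \langle \mathcal{P} \rangle\} = \mathcal{P}_{\kk}$. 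Because a connected sum of knot projections (Definition~\ref{dfn_connected}) yields an alternating diagram of the connected sum of the corresponding alternating knots, we also have $\{K^{alt}(P) \mid P \in \langle \mathcal{T} \rangle \sharp \langle \mathcal{T} \rangle\} = \mathcal{T}_{\kk} \sharp \mathcal{T}_{\kk}$.

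For the inclusion $\subseteq$, take $K \in \mathcal{R}_{\kk} \cup \mathcal{P}_{\kk} \cup \mathcal{T}_{\kk} \sharp \mathcal{T}_{\kk}$. By the correspondence above, some alternating diagram of $K$ has underlying projection $P \in \langle \mathcal{R} \rangle \cup \langle \mathcal{P} \rangle \cup \langle \mathcal{T} \rangle \sharp \langle \mathcal{T} \rangle$. Lemma~\ref{eq6} then gives $C(K^{alt}(P)) = u^-(P) = 2$, hence $C(K) = 2$ and $u^-(K) \le u^-(P) = 2$. The reverse inequality $u^-(K) \ge C(K) = 2$ comes from Theorem~\ref{thm2}: for every $P' \in Z(K)$ we have $C(K) \le u^-(P')$, and taking the minimum over $P'$ gives $C(K) \le u^-(K)$. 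Therefore $u^-(K) = 2$, as required.

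For the inclusion $\supseteq$, let $K$ be an alternating knot with $C(K) = 2$ and $u^-(K) = 2$. By Definition~\ref{def_uk} there is a projection $P \in Z(K)$ realizing $u^-(P) = 2$, and by construction $K^{alt}(P) = K$, so $C(K^{alt}(P)) = u^-(P) = 2$. Lemma~\ref{eq6} then yields $P \in \langle \mathcal{R} \rangle \cup \langle \mathcal{P} \rangle \cup \langle \mathcal{T} \rangle \sharp \langle \mathcal{T} \rangle$, and the correspondence recorded above gives $K = K^{alt}(P) \in \mathcal{R}_{\kk} \cup \mathcal{P}_{\kk} \cup \mathcal{T}_{\kk} \sharp \mathcal{T}_{\kk}$.

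The main obstacle is the bookkeeping at the bridge: verifying that $\sim$-equivalence on projections exactly matches equivalence of alternating knot types, and that the connected sum of projections descends to the connected sum of alternating knots. Both facts are essentially immediate from the definition of an alternating diagram and from Definition~\ref{dfn_connected}, but they have to be invoked explicitly so that Lemma~\ref{eq6} transports cleanly to the knot level, and so that the lower bound $u^-(K) \ge C(K)$ obtained from Theorem~\ref{thm2} pins down $u^-(K)$ at the value $2$ rather than merely bounding it from above by $u^-(P)$.
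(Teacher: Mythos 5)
Your proposal is correct and follows essentially the same route as the paper: both transport the projection-level identity of Lemma~\ref{eq6} to the knot level via the correspondence $P \mapsto K^{alt}(P)$ (using that $P$ determines its alternating diagram up to reflection and that $\ri^{\pm}$ and connected sum respect this correspondence, cf.\ Definition~\ref{def_uk}). Your write-up is merely more explicit than the paper's, in particular in invoking Theorem~\ref{thm2} to get $u^-(K)\ge C(K)=2$ and so pin down $u^-(K)=2$, a step the paper leaves implicit.
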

\begin{proof}
Note that $P$ uniquely determines an alternating knot diagram (up to reflection).  
For Lemma~\ref{eq6}, $\{ P~|~P \in \langle {\mathcal{R}} \rangle \cup \langle {\mathcal{P}} \rangle \cup \langle {\mathcal{T}} \rangle \sharp \langle {\mathcal{T}} \rangle \}$ determines $\{ K^{alt}(P)$ $|$ $P \in \langle {\mathcal{R}} \rangle \cup \langle {\mathcal{P}} \rangle \cup \langle {\mathcal{T}} \rangle \sharp \langle {\mathcal{T}} \rangle \}$, which equals $\mathcal{R}_{\kk}$ $\cup$ ${\mathcal{P}}_{\kk}$ $\cup~ \mathcal{T}_{\kk} \sharp \mathcal{T}_{\kk}$.   Similarly, $\{P~|~C(K^{alt}(P))$ $=$ $u^-(P)=2~(\forall P)\}$ determines $\{K^{alt}(P)~|~C(K^{alt}(P))$ $=$ $u^-(P)=2~(\forall P)\}$, which equals $\{ K$ an alternating knot $|~C(K)=2,$ $u^-(K)=2 \}$ (cf.~Definition~\ref{def_uk}).  
\end{proof}
\begin{theorem}\label{corollary1}
Let $\mathcal{R}_{\kk}$, $\mathcal{P}_{\kk}$, and $\mathcal{T}_{\kk} \sharp \mathcal{T}_{\kk}$ be as in Notation~\ref{not3}.       
Let $K$ be an alternating knot and $C(K)$ the crosscap number of $K$.  Let $u^- (K)$ be an integer as in Definition~\ref{def_uk}.    
Then, the following conditions are mutually equivalent.  
 
\noindent $(A)$ $K$ $\in \mathcal{R}_{\kk} \cup {\mathcal{P}}_{\kk}$ $\cup~\mathcal{T}_{\kk} \sharp \mathcal{T}_{\kk}$.


\noindent $(B)$ $C(K)$ $=$ $2$.  

\noindent $(C)$ $u^- (K)  =2$.
\end{theorem}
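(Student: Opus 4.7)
The plan is to mirror the structure of the proof of Proposition~\ref{fact2}, replacing its crosscap-number-one ingredients (Theorem~\ref{thm1}(1), Lemmas~\ref{eq4} and \ref{eq5}) with their crosscap-number-two analogues (Theorem~\ref{thm1}(2), Lemmas~\ref{eq6} and \ref{eq7}). The implications (A)$\Rightarrow$(B) and (A)$\Rightarrow$(C) will follow immediately from Lemma~\ref{eq7}. For (C)$\Rightarrow$(A), I would pick $P \in Z(K)$ attaining $u^-(K) = 2$, invoke Theorem~\ref{thm1}(2) to obtain $P \in \langle \mathcal{R} \rangle \cup \langle \mathcal{P} \rangle \cup \langle \mathcal{T} \rangle \sharp \langle \mathcal{T} \rangle$, and conclude $K = K^{alt}(P) \in \mathcal{R}_{\kk} \cup \mathcal{P}_{\kk} \cup \mathcal{T}_{\kk} \sharp \mathcal{T}_{\kk}$, using the uniqueness of the alternating diagram associated to $P$.

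The substantive direction is (B)$\Rightarrow$(A). I would apply Fact~\ref{factKL}. Case~2 there would force $C(K) = 2g(K) + 1 = 2$, hence $g(K) = 1/2$, which is impossible; so Case~1 must occur. Thus for some alternating diagram $D^{alt}(K)$ with underlying projection $P$, the minimal genus algorithm produces a state $s$ whose state surface $\Sigma_0$ is non-orientable with $\chi(\Sigma_0) = -1$, so $|S_s| = n(P) - 1$; equivalently, $s$ consists of exactly two non-Seifert splices together with $n(P) - 2$ Seifert splices.

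The heart of the proof is to realize $s$ by a $u^-$-sequence containing exactly two $S^-$-operations, which will yield $u^-(P) \le 2$. I would linearly order the $n(P)$ splices of $s$ as
\[ P = P_0 \stackrel{Op_1}{\to} P_1 \stackrel{Op_2}{\to} \cdots \stackrel{Op_{n(P)}}{\to} P_{n(P)} = s, \]
with indices $i < j$ where $Op_i, Op_j$ are non-Seifert and all other $Op_k$ are Seifert. Exactly as in Case~1 of Proposition~\ref{fact2}, I would invoke Lemma~\ref{connection} to force every pair among the Seifert-spliced double points to lie in configuration (b) of Figure~\ref{connection3}: a pair in configuration (a) would fail to contribute $+2$ to the component count over the two Seifert splices, contradicting the requirement that the $n(P) - 2$ Seifert splices net $+n(P) - 2$ (each $+1$, since the two $S^-$ splices net $0$). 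The same argument controls the Seifert/$S^-$ pairs. Once these configurations are pinned down, the Seifert splices in $s$ can be rewritten as $\ri^-$ splices in an appropriate order, producing a $u^-$-sequence from $P$ to $O$ with exactly two $S^-$'s, so $u^-(P) \le 2$. Combining with Theorem~\ref{thm2}, $2 = C(K) \le u^-(P)$, so $u^-(P) = 2$; consequently $u^-(K) = 2$ (using $C(K) \le u^-(K) \le u^-(P)$), and Lemma~\ref{eq7} delivers (A).

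The main obstacle is the configuration step: in the $C(K) = 1$ setting only a single $S^-$ had to be accommodated among the Seifert splices, but here two are present, and one must simultaneously control Seifert/Seifert and Seifert/$S^-$ configurations as well as confirm that the two $S^-$-marked double points do not obstruct the reinterpretation of Seifert splices as $\ri^-$ operations. The component-count bookkeeping is correspondingly heavier, but the argument extends Case~1 of Proposition~\ref{fact2} in a direct way.
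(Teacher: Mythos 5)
Your proposal follows the paper's own proof essentially step for step: (A)$\Leftrightarrow$(C) and (A)$\Rightarrow$(B) via Lemma~\ref{eq7} and Theorem~\ref{thm1}~(2), and (B)$\Rightarrow$(A) via Fact~\ref{factKL} (Case~2 excluded by parity), the state with two non-Seifert splices and $n(P)-2$ Seifert splices, the Lemma~\ref{connection} configuration argument to reinterpret the Seifert splices as $\ri^-$'s, and Theorem~\ref{thm2} to get $2=C(K)\le u^-(P)$ before closing with Lemma~\ref{eq7}. The level of detail in the reinterpretation step matches the paper's (which likewise argues segment by segment around the two $S^-$'s), so this is correct and essentially the same approach.
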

\begin{proof}
\noindent (Proof of (A) $\Leftrightarrow$ (B).) 
Lemma~\ref{eq7} immediately implies that (A) $\Rightarrow$ (B).  

\noindent((B) $\Rightarrow$ (A).)   
Suppose that $C(K)=2$ and $K$ is an alternating knot.  
By definition, there exist an alternating knot diagram $D^{alt}(K)$ of $K$.  Let $P$ be a knot projection obtained from $D^{alt}(K)$ by ignoring over/under information of the double points.  
By Fact~\ref{factKL}, we have the following Case~1 and Case~2 corresponding to (\ref{case1}) and (\ref{case2}) of Fact~\ref{factKL}, respectively.  

\noindent Case~1: there exist an alternating knot diagram $D^{alt}(K)$ of $K$, a state $s$ of $D^{alt}(K)$ such that a non-orientable state surface $\Sigma_0$ obtained from $D^{alt}(K)$ satisfies that $C(K)=1-\chi(\Sigma_0)$, i.e., $\chi(\Sigma_0)$ $=$ $-1$.  Here, note that the state $s$ is given by the algorithm of Fact~\ref{AKthm}.  Let $n(P)$ be the number of double points of $P$.  The state $s$ is obtained from $P$ by $n(P)$ splices.  In the following, we find the state $s$ in the $2^{n(P)}$ candidates.  If the $n(P)$ splices are $n(P)$ Seifert splices, they give an orientable surface, which implies a contradiction.  Thus, there exists at least one $S^-$ in the $n(P)$ splices.  
Further, since $\chi(\Sigma_0)= -1$, the splices consist of $n(P) -2$ Seifert splices produce an $n(P) -1$-component curve and exactly two $S^-$'s (there are no other possibilities).     
Then, we interpret the $n(P)$ splices as a sequence of the $n(P)$ splices, and suppose that there exists a sequence such that
\[
P=P_0 \stackrel{Op_1}{\to} P_1 \stackrel{Op_2}{\to} P_2 \stackrel{Op_3}{\to} \dots \stackrel{Op_{n(P)}}{\to} P_{n(P)} = s,   
\]
$Op_i = S^-$ and $Op_j = S^-$ ($1 \le i < j \le n(P)$).   
By Lemma~\ref{connection}, for the two distinct double points corresponding to $Op_k$ $(1 \le k \le i-1)$ and $Op_t$ $(1 \le t \le n(P), k \neq t)$, any two double points are represented as in Figure~\ref{connection3}~(b).  Here, if there exists a pair of type (a), then a pair consisting of two splices on the two double points sends a $1$-component curve to another $1$-component curve, which implies the contradiction with the condition that $n(P)-2$ Seifert splices produce $n(P)-1$-component curves.
Thus, noting that the state $s$ has one to one correspondence with the $n(P)$ splices (Definition~\ref{state}), we can choose $\sigma_1, \sigma_2, \ldots, \sigma_{i-1}$  like $\Sigma_u$ (Notation~\ref{not4})  corresponding to $\ri^-$'s applied successively to $P$ to obtain $P_{i-1}$.  Here, by Lemma~\ref{connection}, note that two double points corresponding to $\sigma_k$ $(1 \le k \le i-1)$ and $Op_i (= S^-)$ are the configuration of  type~(b) of  Figure~\ref{connection3}.     After applying $S^-$, a sequence consisting of a single $S^-$ and $j-i-1$ Seifert splices from $P_i$ to $P_j$, where $j-i-1$ Seifert splices should produce $j-i-1$ new components.  By recalling Definition~\ref{state}, the state $s$ has one to one correspondence with the $n(P)$ splices.  Then, 
by focusing $1$-gons, it is easy to choose $\sigma_{i+1}, \sigma_{i+2}, \ldots, \sigma_{j-1}$  like $\Sigma_u$ (Notation~\ref{not4})  corresponding to $\ri^-$'s applied successively to $P_i$ to obtain the state $P_{j-1}$.  Here, by Lemma~\ref{connection},  note that two double points corresponding to $\sigma_{k'}$ $(i+1 \le k' \le j-1)$ and $Op_j (= S^-)$ are the configuration of  type~(b) of  Figure~\ref{connection3}.  
Similarly, it is elementary to choose $\sigma_{j+1}, \sigma_{j+2}, \ldots, \sigma_{n(P)}$ like $\Sigma_u$ (Notation~\ref{not4}) corresponding to $\ri^-$'s applied successively to $P_j$ to obtain the state $s$.  
Then, $u^-(P) \le 2$.  Here, $K$ is not the unknot and is not in $\mathcal{T}_{\kk}$, $2 \le C(K) \le u^-(P)$ ($\because$ Theorem~\ref{thm2}).  Thus, $C(K)=2$ and $u^-(P) =2$, where $P$ is a knot projection obtained from $D^{alt}(K)$.  Then, by Lemma~\ref{eq7}, we have $K \in \mathcal{R}_{\kk}$ $\cup$ ${\mathcal{P}}_{\kk}$ $\cup$ $\mathcal{T}_{\kk} \sharp \mathcal{T}_{\kk}$, which implies (A).

\noindent Case~2: For an orientable genus $g(K)$, $C(K)$ $=$ $2g(K)+1$, which implies a contradiction with $C(K)=2$, which is an even number. 

\noindent((A) $\Leftrightarrow$ (C).)

Since Lemma~\ref{eq7} immediately implies that (A) $\Rightarrow$ (C), it is sufficient to shown that (C) $\Rightarrow$ (A).  
Recall that $u^- (K)$ $=$ $\min_{P \in Z(K)} u^-(P)$ (Definition~\ref{def_uk}).  Then, 
\begin{align*}
&u^- (K) =2\\
\Rightarrow& \exists P \in Z(K)~{\textrm{such that}}~P \in \langle {\mathcal{R}} \rangle \cup \langle {\mathcal{P}} \rangle \cup \langle {\mathcal{T}} \rangle \sharp \langle {\mathcal{T}} \rangle \quad(\because~{\rm{Theorem~\ref{thm1}~(2)}}) \\
\Rightarrow& K \in \mathcal{R}_{\kk} \cup {\mathcal{P}}_{\kk} \cup \mathcal{T}_{\kk} \sharp \mathcal{T}_{\kk}.
\end{align*}  
\end{proof}

\section{Additivity of $u^- (P)$}\label{sec7} 
In this section, we freely use notations in Definition~\ref{dfn_connected}.  
\begin{proposition}\label{prop1}
Let $P_1$ and $P_2$ be knot projections.  
\[u^- ( P_1 \sharp P_2 ) = u^-(P_1) + u^-(P_2).
\]
\end{proposition}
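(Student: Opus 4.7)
The plan is to prove the two inequalities $u^-(P_1 \sharp P_2) \le u^-(P_1) + u^-(P_2)$ and $u^-(P_1 \sharp P_2) \ge u^-(P_1) + u^-(P_2)$ separately: the upper bound will come from assembling the Move~\ref{lemma2}-sequences that build $P_1$ and $P_2$, while the lower bound will come from partitioning any optimal sequence for $P_1 \sharp P_2$ according to which summand each double point belongs to.

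\textbf{Upper bound.} First, I would invoke Proposition~\ref{prop_move} to write each $P_i$ as the result of $u^-(P_i)$ applications of Move~\ref{lemma2} starting from some $Q_i \in \langle\{O\}\rangle$. I would then form $Q_1 \sharp Q_2$, choosing the connecting points $p_1, p_2$ generically on edges of $Q_i$ that are disjoint from the arcs and regions manipulated by the chosen moves (if no such safe edges exist in $Q_i$, I would first insert extra $\ri^+$-loops to create them; this preserves $Q_i \in \langle\{O\}\rangle$). With such a choice, each small loop of $Q_i$ remains a 1-gon of $Q_1 \sharp Q_2$, so $Q_1 \sharp Q_2 \in \langle\{O\}\rangle$ (reducing each loop with $\ri^-$), and every Move~\ref{lemma2} used in building $P_i$ from $Q_i$ is still a valid Move~\ref{lemma2} when applied on the $Q_i$-side of the connected sum. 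Performing all $u^-(P_1)$ moves on the $Q_1$-side, then all $u^-(P_2)$ moves on the $Q_2$-side, produces $P_1 \sharp P_2$ from $Q_1 \sharp Q_2 \in \langle\{O\}\rangle$ in $u^-(P_1) + u^-(P_2)$ applications of Move~\ref{lemma2}, and Proposition~\ref{prop_move} then yields the upper bound.

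\textbf{Lower bound.} For the lower bound, I would fix a sequence $P_1 \sharp P_2 = R_0 \to R_1 \to \cdots \to R_N = O$ realizing $u^-(P_1 \sharp P_2)$. The connecting circle $C$ meets $P_1 \sharp P_2$ in exactly 2 points and is preserved by all splices (which occur at double points disjoint from $C$), so each double point lies strictly on one side of $C$. I would then split the splices into a $P_1$-side and a $P_2$-side subsequence, with $s_i$ counting the type-$S^-$ splices on the $P_i$-side; note $s_1 + s_2 = u^-(P_1 \sharp P_2)$. Applied in order to $P_i$ alone, the $P_i$-side subsequence forms a valid reduction of $P_i$ to $O$: the Seifert-versus-non-Seifert classification at each double point depends only on local orientation, which is the same in $P_i$ and in the current connected sum, so each restricted splice is still of type $\ri^-$ or $S^-$ and preserves the 1-component property; after all $n(P_i)$ double points are spliced, $P_i$ must be $O$. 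Combined with the key claim that this restricted subsequence has at most $s_i$ splices of type $S^-$, this yields $u^-(P_1) + u^-(P_2) \le s_1 + s_2$.

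\textbf{Main obstacle.} The hard part will be the 1-gon preservation underlying the key claim: namely, that whenever the splice at a $P_1$-side double point $d$ is of type $\ri^-$ in the current connected-sum projection, the same splice is $\ri^-$ in the current $P_1$-side factor alone. To prove this I would let $\beta$ denote the boundary loop of the 1-gon at $d$. Since $\beta$ carries no double points other than $d$ and the only intersections of the knot projection with $C$ are 2 points, a parity argument forces $|\beta \cap C| \in \{0,2\}$. If $|\beta \cap C| = 2$, then the arc of $\beta$ on the $P_2$-side coincides with the entire $P_2$-side tangle, which must therefore be double-point-free, so the current $P_2$-side factor is $O$ and the connected sum degenerates, making the claim trivial. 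If $|\beta \cap C| = 0$, then $\beta$ lies on the $P_1$-side and bounds a disk $D$; the 1-gon face must equal $D$ (the other disk contains the non-trivial $P_2$-side tangle and so is not a face), and $D$ is then also a face of the $P_1$-side factor. Executing this topological bookkeeping carefully at every step of the sequence, together with the analogous genericity work underlying the upper-bound construction, will be the main technical content of the proof.
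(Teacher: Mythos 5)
Your lower-bound half is correct, and it is essentially the paper's own argument made precise: the paper argues that a connected sum can be oriented compatibly with its factors and that the type of each splice on the $P_i$-side corresponds to its type in $P_1 \sharp P_2$, and your restriction-to-one-side bookkeeping, together with the $1$-gon analysis of the loop $\beta$ (the cases $|\beta \cap C|=0,2$), is exactly the content needed to justify that correspondence in the direction ``$\ri^-$ in the sum implies $\ri^-$ in the factor.'' This part is fine and in fact more detailed than the published proof.

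The upper-bound half has a genuine gap: you do not prove the inequality for the \emph{given} connected sum. A connected sum of knot projections depends on the choice of $(p_1,p_2,h)$ (the paper stresses this non-uniqueness, and the proposition is later applied to arbitrary connected sums, e.g.\ $\langle \mathcal{T}\rangle \sharp \langle \mathcal{T}\rangle$ and $\widehat{7_4}\sharp\widehat{7_4}$). By choosing the connecting points on $Q_i$ ``generically, away from the arcs and regions manipulated by the moves,'' you build \emph{some} connected sum of $P_1$ and $P_2$, in general not the prescribed one; for instance, plugging $P_2$ into the $1$-gon of a kinked $P_1$ yields a projection that is not even $\ri^{\pm}$-equivalent to the sum obtained by plugging it in elsewhere, so the bound for your convenient placement does not transfer. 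The fallback of inserting extra $\ri^+$-loops into $Q_i$ makes this worse: the move sequence then terminates at a kinked copy of $P_i$, not at $P_i$, so the construction does not end at $P_1\sharp P_2$ at all unless you additionally prove that $u^-$ is invariant under $\ri^{\pm}$ and that this repairs the placement issue (it does not, by the example above). The honest difficulty is precisely the mirror image of your lower-bound claim: a $1$-gon of $P_i$ whose loop contains the summing point $p_i$ is no longer a $1$-gon in $P_1\sharp P_2$, so simply transplanting optimal sequences (which is how the paper dismisses this direction ``by definition'') or transplanting Move~\ref{lemma2}-decompositions via Proposition~\ref{prop_move} requires showing that each factor admits an optimal reduction (or Move-decomposition) compatible with the prescribed base point. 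Your proposal sidesteps this by moving the base point, which changes the statement being proved.
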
    
\begin{proof}
Let $P$ $=$ $P_1 \sharp P_2$.  
Note that by definition, $u^-(P_1 \sharp P_2)$ $\le$ $u^-(P_1)$ $+$ $u^-(P_2)$.  
For any orientation, every $S^-$ is characterized by local oriented arcs, as shown in Figure~\ref{orientedS}.  On the other hand, when we choose appropriate orientations of $P_1$ and $P_2$, every connected sum $P_1 \sharp P_2$ does not change orientations of factors $P_1$ and $P_2$, as shown in Figure~\ref{orientedConn}.  Therefore, type $S^-$ ($\ri^-$,~resp.) on $P_i$ ($i=1, 2$) one to one corresponds to that of $P_1 \sharp P_2$, which implies that    
$u^-(P_1 \sharp P_2)$ $\ge$ $u^-(P_1)$ $+$ $u^-(P_2)$.  
\end{proof}
\begin{figure}[h!]
\includegraphics[width=8cm]{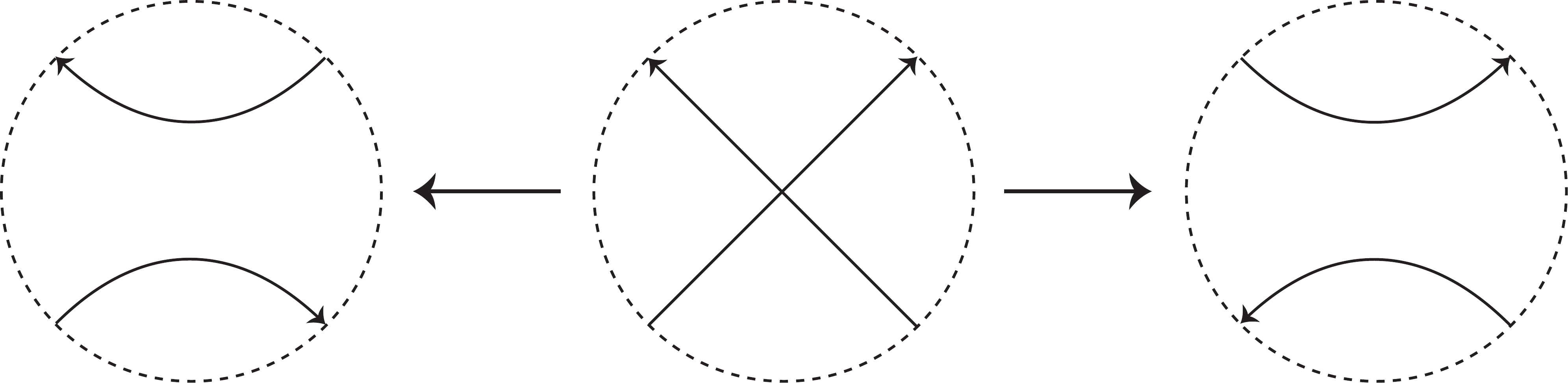}
\caption{Every $S^-$ is characterized by local oriented arcs.  }\label{orientedS}
\end{figure}
\begin{figure}[h!]
\includegraphics[width=10cm]{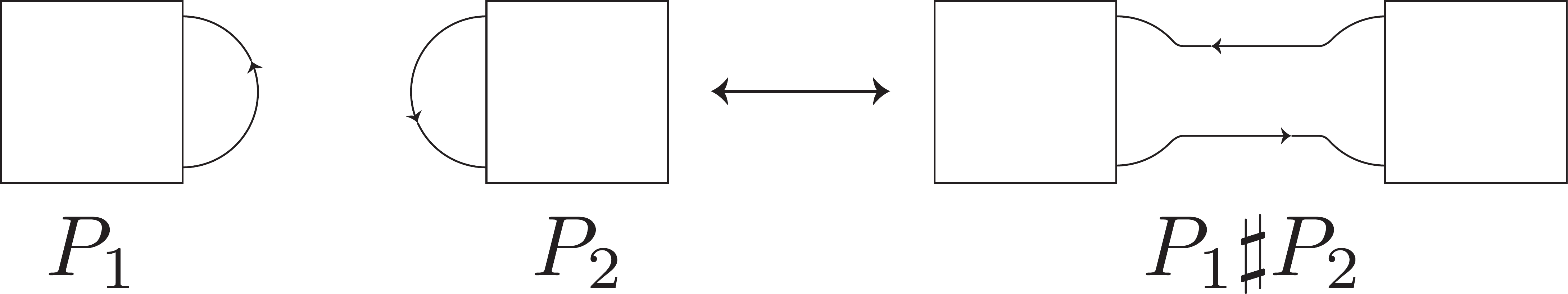}
\caption{An operation $\sharp$ preserves  orientations of $P_1$ and $P_2$.}\label{orientedConn}
\end{figure}
As a corollary of Proposition~\ref{prop1}, we have Corollary~\ref{cor3} (cf.~Theorem~\ref{thm2}).   
\begin{corollary}\label{cor3}
For a knot projection $P$, $D_P$ and $K(D_P)$ be as in Definition~\ref{state}, and let $C(K(D_P))$ be the crosscap number of $K(D_P)$.  
Let $P_1$ and $P_2$ be knot projections.  
Suppose that $C(K(D_{P_1 \sharp P_2}))$ $\neq$ $C(K(D_{P_1}))$ $+$ $C(K(D_{P_2}))$.  Then, 
\[
C(K(D_{P_1 \sharp P_2})) < u^- (K(D_{P_1 \sharp P_2})).  
\]
\end{corollary}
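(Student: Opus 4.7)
The plan is to derive the corollary by chaining three ingredients already in hand: Theorem~\ref{thm2} (the bound $C(K(D_P)) \le u^-(P)$), Proposition~\ref{prop1} (additivity of $u^-$ under the connected sum of projections), and the well-known subadditivity of the crosscap number under connected sum of knots, $C(K_1 \sharp K_2) \le C(K_1) + C(K_2)$. The last is standard (and is implicit in the Murakami--Yasuhara discussion cited in the introduction): it follows from connect-summing any two non-orientable spanning surfaces along a disk neighborhood of the summing arc, which produces a non-orientable spanning surface for $K_1 \sharp K_2$ whose Euler characteristic drops by exactly $1$.

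First I would make the identification $K(D_{P_1 \sharp P_2}) = K(D_{P_1}) \sharp K(D_{P_2})$, which is immediate from Definition~\ref{dfn_connected} because adding the over/under data away from the gluing arcs turns the projection connected sum into the diagram connected sum, and hence into the knot connected sum. Combined with subadditivity this gives
\[
C(K(D_{P_1 \sharp P_2})) \;\le\; C(K(D_{P_1})) + C(K(D_{P_2})),
\]
and the standing hypothesis that these two quantities are unequal promotes this to a strict inequality.

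Next, I would apply Theorem~\ref{thm2} to each factor $P_i$, equipped with the over/under data it inherits from $D_{P_1 \sharp P_2}$, to obtain $C(K(D_{P_i})) \le u^-(P_i)$ for $i=1,2$, and add:
\[
C(K(D_{P_1})) + C(K(D_{P_2})) \;\le\; u^-(P_1) + u^-(P_2).
\]
Finally, Proposition~\ref{prop1} identifies the right-hand side as $u^-(P_1 \sharp P_2)$, and concatenating the three inequalities yields $C(K(D_{P_1 \sharp P_2})) < u^-(P_1 \sharp P_2)$, which is the conclusion (the symbol $u^-(K(D_{P_1 \sharp P_2}))$ in the statement being read as $u^-$ of the underlying projection, consistent with how $u^-$ was defined).

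There is no real obstacle here: the result is a formal consequence of additivity of $u^-$ combined with the well-known subadditivity of $C$. The only item to watch is to invoke subadditivity of the crosscap number explicitly (it is used but not proved in the excerpt), and to keep the diagram--projection bookkeeping straight so that Theorem~\ref{thm2} is applied to the correct diagrams of the summands.
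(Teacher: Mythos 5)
Your proposal is correct and is essentially the paper's own (unwritten) argument: the paper derives Corollary~\ref{cor3} exactly by combining Proposition~\ref{prop1} with Theorem~\ref{thm2} and the Murakami--Yasuhara subadditivity $C(K_1\sharp K_2)\le C(K_1)+C(K_2)$, which is the chain you spell out, including the identification $K(D_{P_1\sharp P_2})=K(D_{P_1})\sharp K(D_{P_2})$ for the induced over/under data. Your reading of $u^-(K(D_{P_1\sharp P_2}))$ as $u^-$ of the underlying projection $P_1\sharp P_2$ is the appropriate one here, since the knot-level invariant of Definition~\ref{def_uk} is only defined for alternating knots and controlling it would require information about all alternating projections of the composite knot, which neither the paper nor your argument needs.
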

Recall (cf.~Notation~\ref{notation_p}, Definition~\ref{ri_eq_notation}) that $S^+$ and $\ri^+$ are the respective inverse of $S^-$ and $\ri^-$.  
\begin{definition}[unknotting-type number $u(P)$]\label{dfn3_u}
Let $P$ be a knot projection and $O$ the simple closed curve.  The nonnegative integer $u(P)$ is defined as the minimum number of operations of types  $S^{\pm}$ to obtain $O$ from $P$ by a finite sequence of operations of types $S^{\pm}$ and of types $\ri^{\pm}$.  
\end{definition}
\begin{example}\label{74two}
In general, the crosscap number is not additive under the connected sum \cite{MY}.  For example, for the knot $7_4$, $C(7_4)$ $=$ $3$ and $C(7_4 \sharp 7_4)$ $=$ $5$.     For the knot projection $\widehat{7_4}$, by Theorem~\ref{thm1} and Proposition~\ref{prop1}, $u^-(\widehat{7_4})$ $=$ $3$ and $u^-(\widehat{7_4} \sharp \widehat{7_4})$ $=$ $u^-(\widehat{7_4})$ $+$ $u^-(\widehat{7_4})$ $=$ $3$ $+$ $3$ $=$ $6$.   
However, the behavior of the number $u(P)$, introduced in Definition~\ref{dfn3_u}, is different from that of $u^-(P)$.   By definition, $u(P) \le u^-(P)$ for every knot projection $P$.  
We have $u(\widehat{7_4})$ $\le$ $u^-(\widehat{7_4})$ $=$ $3$ and $u(\widehat{7_4} \sharp \widehat{7_4})$ $\le$ $5$, as shown in Figure~\ref{f15}.     
\end{example}
\begin{figure}[h!]
\includegraphics[width=12cm]{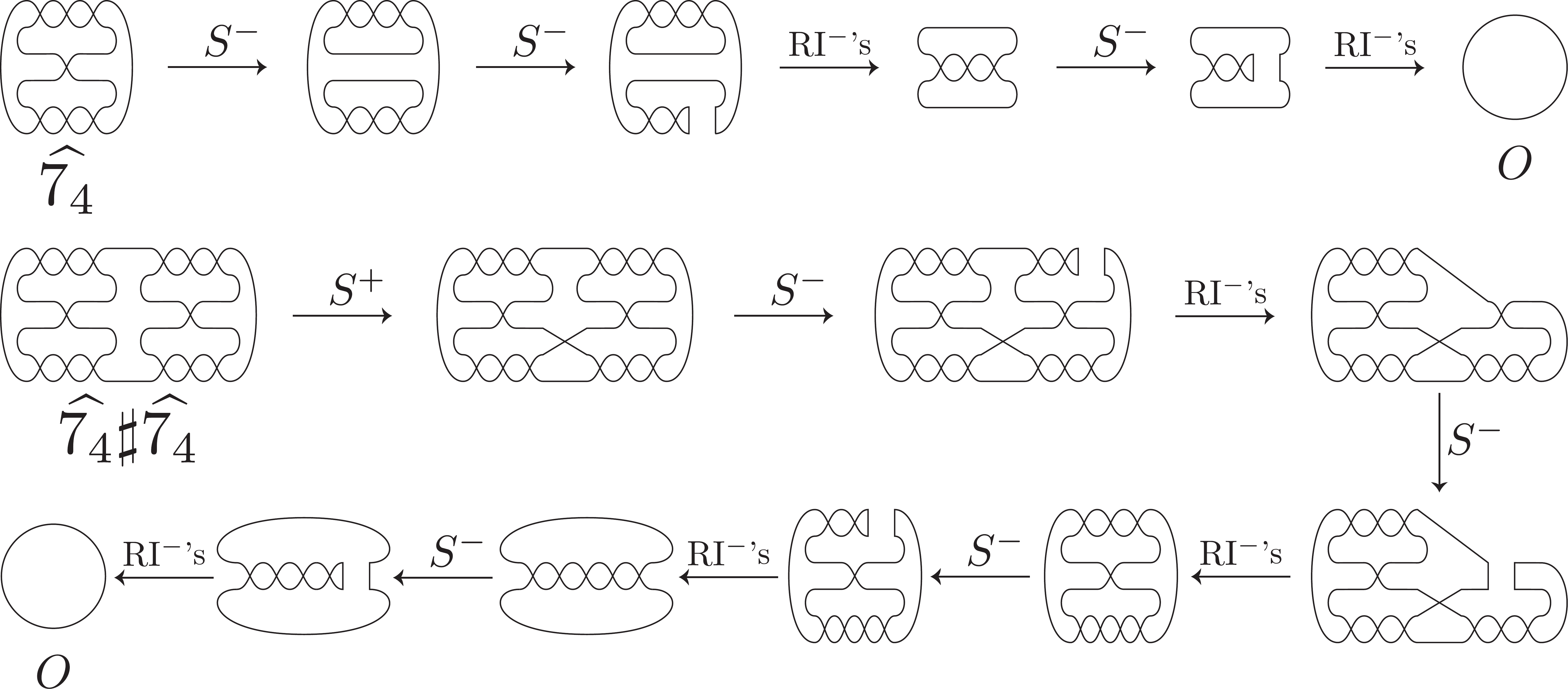}
\caption{$u(\widehat{7_4})$ $\le$ $u^-(\widehat{7_4})$ $=$ $3$, $u(\widehat{7_4} \sharp \widehat{7_4})$ $\le$ $5$.}\label{f15}
\end{figure}
For $u(P)$, hoping for the best, we ask the following question: 
\begin{question}
Let $P$ be a knot projection and $D_P$ a knot diagram by adding any over/under information to each double point of $P$.  Let $K(D_P)$ be a knot type having a knot diagram $D_P$.  
Let $C(K(D_P))$ be the crosscap number of $K(D_P)$.  
Then, does every knot projection $P$ hold 
\[  
C(K(D_P)) \le u (P)? 
\]
\end{question}
\begin{remark}
Let $Z(K)$ be the set of knot projections obtained from alternating knot diagrams of $K$.  Then, $\min_{P \in Z(K)} u(P)$ is an alternating knot invariant.  Let $u (K)$ $=$ $\min_{P \in Z(K)} u(P)$ (cf.~Definition~\ref{def_uk}).      
By Theorem~\ref{thm2}, for every knot $K$, $C(K) \le$ $u^- (K)$.  However, it is unknown whether $C(K) \le$ $u (K)$ or not.  
\end{remark}
\begin{example}\label{74general}
If we generalize Example~\ref{74two}, we have examples Case~1--Case~3, as shown in Figs.~\ref{74a}--\ref{74c} by using connected sums of knot projections where each component is a knot projection, as shown in Figure~\ref{f15a}.   
Namely, there exist infinitely many knot projections, each of which is represented as $P \sharp P'$ such that $g(K^{alt}(P))$ $=$ $1$, $g(K^{alt}(P'))$ $=$ $1$, $C(K^{alt}(P))$ $=3$, $C(K^{alt}(P'))$ $=3$, $C(K^{alt}(P) \sharp K^{alt}(P'))$ $=5$, 
$u^-(P \sharp P')$ $=$ $6$, and $u(P \sharp P') \le 5$.  Here, for $C(K^{alt}(P) \sharp K^{alt}(P'))$ $=5$, we use Fact~\ref{AKthm}.  
For the initial knot projection $P \sharp P'$ in each figure of Figs.~\ref{74a}--\ref{74c}, each symbol, ``odd" or ``even", indicates the number of given double points.       
\end{example}
\begin{figure}[h!]
\includegraphics[width=8cm]{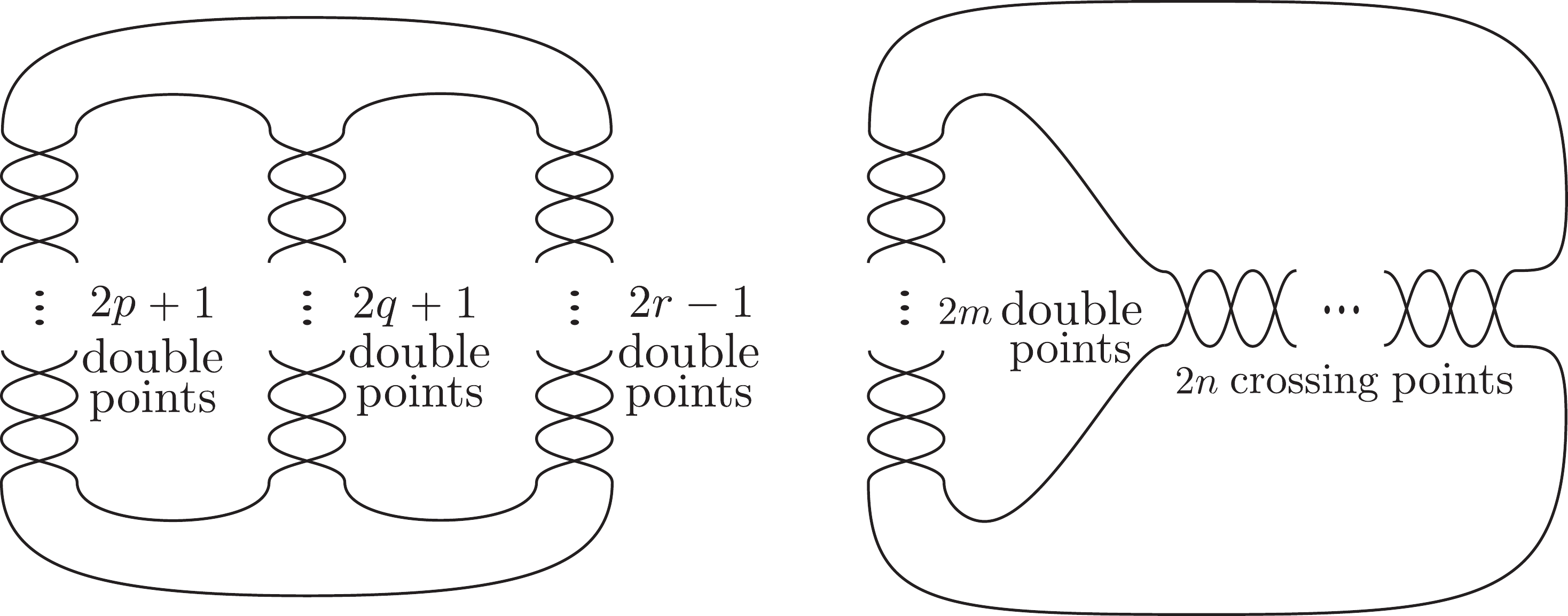}
\caption{$p, q, r \ge 1$ and $m, n \ge 2$. }\label{f15a}
\end{figure}
\begin{figure}[h!]
\includegraphics[width=12cm]{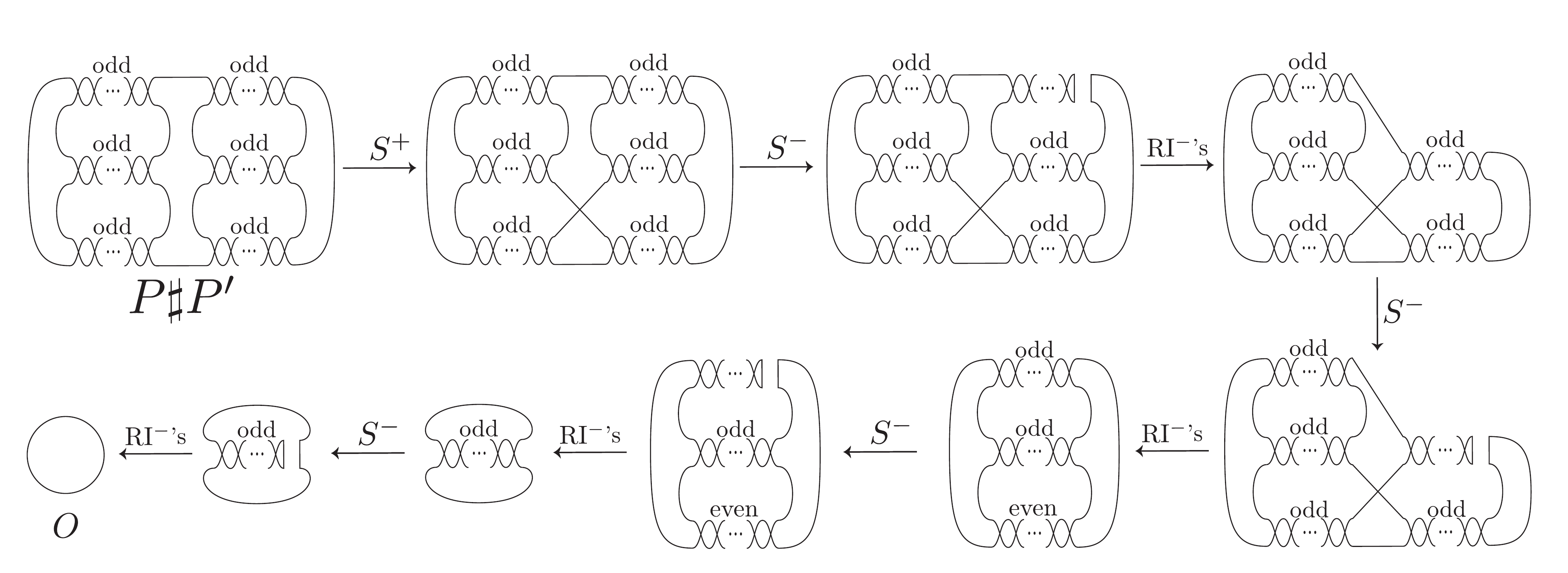}
\caption{Case~1}\label{74a}
\end{figure}
\begin{figure}[h!]
\includegraphics[width=12cm]{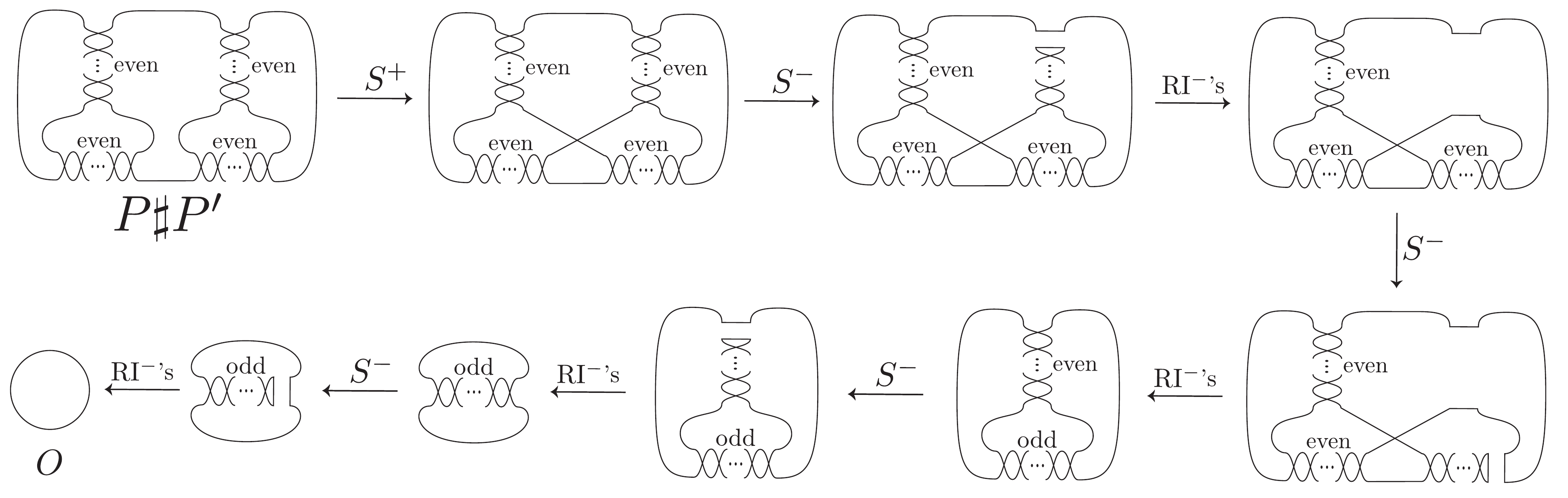}
\caption{Case~2}\label{74b}
\end{figure}
\begin{figure}[h!]
\includegraphics[width=12cm]{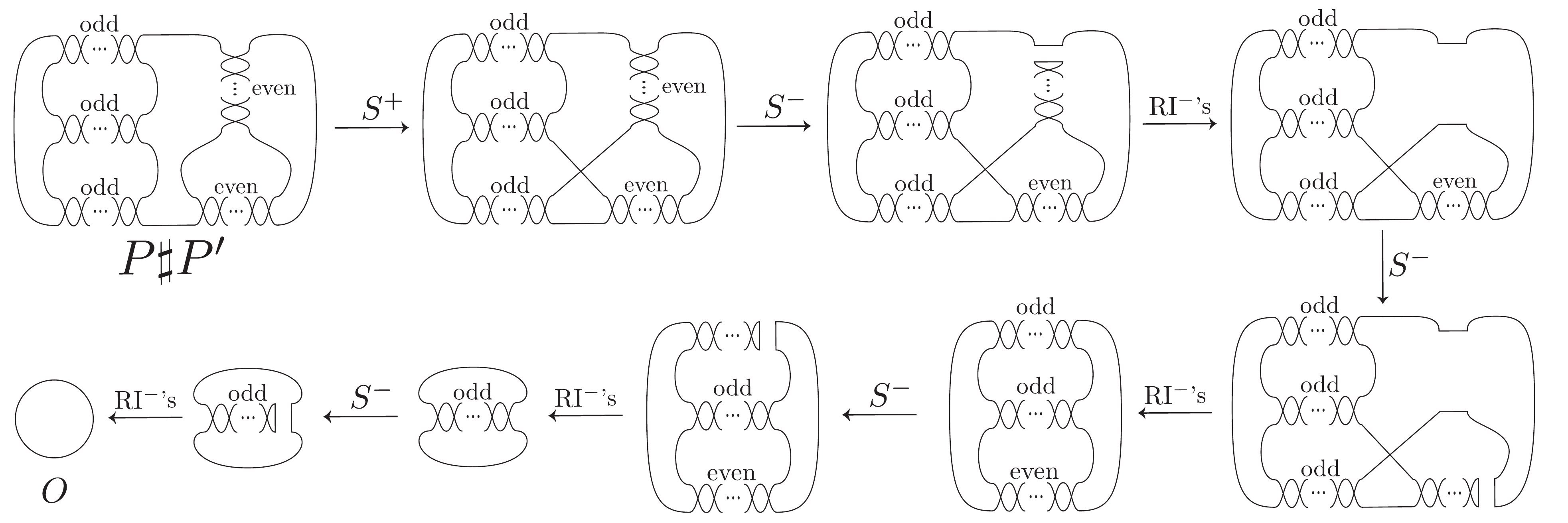}
\caption{Case~3}\label{74c}
\end{figure}
Finally, we remark Propositions~\ref{proposition1} and \ref{proposition2} and Questions~\ref{q2} and \ref{q3}.  
\begin{proposition}\label{proposition1}    
The following conditions are mutually equivalent.  

\noindent$(1)$ $P \in \langle {\mathcal{T}} \rangle$. 

\noindent$(2)$ $u^-(P)=1$.

\noindent$(3)$ $u(P)=1$.
\end{proposition}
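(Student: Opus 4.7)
The plan is to establish (1) $\Leftrightarrow$ (2) directly from Theorem~\ref{thm1}~(1), to derive (2) $\Rightarrow$ (3) from the inequality $u(P) \le u^-(P)$, and to prove the main direction (3) $\Rightarrow$ (1) by showing that a realizing sequence for $u(P) = 1$ cannot involve an $S^+$-move.

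The equivalence (1) $\Leftrightarrow$ (2) is exactly Theorem~\ref{thm1}~(1). For (2) $\Rightarrow$ (3), one has $u(P) \le u^-(P) = 1$, and $u(P) \ne 0$ since otherwise $P \in \langle \{O\} \rangle$ and Proposition~\ref{prop_move} at $n = 0$ would force $u^-(P) = 0$, contradicting (2).

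The main step is (3) $\Rightarrow$ (1). Given $u(P) = 1$, I would fix a realizing sequence and decompose it as $P \sim A \stackrel{S^\epsilon}{\to} B \sim O$, where $\sim$ denotes $\ri^{\pm}$-equivalence and $\epsilon \in \{+, -\}$. From $B \sim O$ we have $B \in \langle \{O\} \rangle$. The case $\epsilon = +$ can be ruled out as follows: then $A = S^-(B)$ at the non-nugatory crossing created by the $S^+$, and a combinatorial argument using cyclic Gauss codes shows $A \in \langle \{O\} \rangle$. Specifically, membership in $\langle \{O\} \rangle$ is characterized by the absence of interlaced crossing pairs in the Gauss code (via Proposition~\ref{prop_move}), and the $S^-$-smoothing at a crossing $c$ rewrites the cyclic Gauss code as the concatenation of the arc between $c$'s two visits with the reverse of the complementary arc---a rewriting that preserves the no-interlacing property, since interlacings within each part are preserved (reversal is neutral on interlacing) and no new interlacing between the two parts can arise. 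Thus $A \in \langle \{O\} \rangle$, hence $P \sim A$ gives $P \in \langle \{O\} \rangle$ and $u(P) = 0$, contradicting (3).

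Therefore $\epsilon = -$, i.e., $A = S^+(B)$ for $B \in \langle \{O\} \rangle$, which is precisely one application of Move~\ref{lemma2} with $i = 1$. Proposition~\ref{prop_move} yields $u^-(A) \le 1$; ruling out $u^-(A) = 0$ (else $A \in \langle \{O\} \rangle$ and so $u(P) = 0$, a contradiction), we obtain $u^-(A) = 1$, so $A \in \langle \mathcal{T} \rangle$ by Theorem~\ref{thm1}~(1), and $P \in \langle \mathcal{T} \rangle$ because $\langle \mathcal{T} \rangle$ is closed under $\sim$ by Definition~\ref{ri_eq_notation}. The main technical obstacle is the combinatorial invariance of membership in $\langle \{O\} \rangle$ under $S^-$-smoothing at a non-nugatory crossing, which is an elementary Gauss-code computation but is the only step not already packaged in the paper's prior results.
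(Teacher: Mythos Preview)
Your proof is correct, and it is more explicit than the paper's. The paper disposes of (1)$\Leftrightarrow$(3) in a single sentence, saying only that it follows ``by the same argument as in Theorem~\ref{thm1}''; this leaves to the reader the task of adapting the Move~\ref{lemma2}/Proposition~\ref{prop_move} classification to sequences that may also use $S^{+}$ and $\ri^{+}$. You carry out that adaptation in detail: after the easy reductions, the only substantive point is to rule out the possibility that the single $S$-move in a realizing sequence for $u(P)=1$ is an $S^{+}$ (in the $P\to O$ direction), and your Gauss-code computation does exactly this by showing that $S^{-}$ applied at a non-nugatory crossing of any $B\in\langle\{O\}\rangle$ returns a projection in $\langle\{O\}\rangle$. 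The argument is sound---since $B$ has no interlaced pair, no other crossing is interlaced with $c$, so every other crossing lies entirely in one of the two arcs $A$, $B$ of the cyclic code $cAcB$, and the rewriting $cAcB\mapsto A\bar B$ cannot create an interlacing either within a block (reversal preserves non-interlacing) or between blocks (the blocks occupy disjoint cyclic intervals). This is precisely the content one must supply to make the paper's ``same argument'' rigorous for the enlarged move set. One small correction: the characterization of $\langle\{O\}\rangle$ as the set of projections whose Gauss code has no interlaced pair is a standard elementary fact about chord diagrams and is not a consequence of Proposition~\ref{prop_move}; you should state it as an independent (easy) lemma rather than attribute it there.
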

\begin{proof}
By Theorem~\ref{thm1}, $(1)$ $\Leftrightarrow$ $(2)$.  
By the same argument as in Theorem~\ref{thm1}, it is easy to show that (1) $\Leftrightarrow$ (3).  
\end{proof}
\begin{proposition}\label{proposition2}  
The following conditions are mutually equivalent.  

\noindent$(1)$ $P$ $\in \langle \mathcal{R} \rangle \cup \langle {\mathcal{P}} \rangle \cup \langle {\mathcal{T}} \rangle \sharp \langle {\mathcal{T}} \rangle$. 

\noindent$(2)$ $u^-(P)=2$.

\noindent$(3)$ $u(P)=2$.
\end{proposition}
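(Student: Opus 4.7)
My plan is to prove the three-way equivalence by combining Theorem~\ref{thm1}~(2) (for $(1) \Leftrightarrow (2)$) with a sign analysis establishing $(2) \Leftrightarrow (3)$. The key universal fact is $u(P) \le u^-(P)$, valid for every knot projection because any sequence of types $S^-$ and $\ri^-$ is in particular a sequence of types $S^{\pm}$ and $\ri^{\pm}$.

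For $(2) \Rightarrow (3)$: I would combine $u(P) \le u^-(P) = 2$ with Proposition~\ref{proposition1} to rule out the alternatives. The case $u(P) = 0$ would force $P \in \langle \{O\} \rangle$ and hence $u^-(P) = 0$; the case $u(P) = 1$ would force $P \in \langle \mathcal{T} \rangle$ and hence $u^-(P) = 1$. Both contradict $u^-(P) = 2$ by Theorem~\ref{thm1}.

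For $(3) \Rightarrow (1)$: I would reverse a sequence realizing $u(P) = 2$ to obtain a sequence from $O$ to $P$ that uses exactly two $S^{\pm}$ operations interleaved with $\ri^{\pm}$ operations. Since every element of $\langle \{O\} \rangle$ has only nugatory crossings and $S^-$ requires a non-nugatory crossing to splice, the first of the two $S^{\pm}$ operations must be $S^+$; Proposition~\ref{proposition1} then places the intermediate projection in $\langle \mathcal{T} \rangle$. For the second $S^{\pm}$, applied after some $\ri^{\pm}$ moves within $\langle \mathcal{T} \rangle$, the choice $S^-$ would, by the transitive action of the dihedral symmetry of a $(2, 2l-1)$-torus knot projection on its crossings together with the converse direction of Theorem~\ref{thm1}~(1), land the sequence in $\langle \{O\} \rangle$, forcing $u(P) = 0$; so the second operation must also be $S^+$. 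The case analysis at the three local types $\alpha, \beta, \gamma$ in the proof of Theorem~\ref{thm1}~(2) then places the result in $\langle \mathcal{R} \rangle \cup \langle \mathcal{P} \rangle \cup \langle \mathcal{T} \rangle \sharp \langle \mathcal{T} \rangle$, and the remaining $\ri^{\pm}$ operations preserve this class.

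The main obstacle is the step ruling out $S^-$ as the second operation. Theorem~\ref{thm1}~(1) only guarantees the existence of some $S^-$ on $\mathcal{T}$ landing in $\langle \{O\} \rangle$, while the argument needs the stronger statement that every $S^-$ applied at a non-nugatory crossing of any element of $\langle \mathcal{T} \rangle$ lands in $\langle \{O\} \rangle$. This upgrade is exactly what the rotational symmetry of a $(2, 2l-1)$-torus knot projection supplies, since it acts transitively on the crossings so that all splices at non-nugatory crossings agree up to $\ri^{\pm}$-equivalence. Once this geometric input is in hand, the rest of the argument is parallel to the proof of Theorem~\ref{thm1}~(2), with the Move~\ref{lemma2}-based construction there matched by the reverse-direction analysis above.
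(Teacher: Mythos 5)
Your handling of $(1)\Leftrightarrow(2)$ via Theorem~\ref{thm1}~(2) and of $(2)\Rightarrow(3)$ is fine (the subcase $u(P)=0$ tacitly uses that $P\in\langle\{O\}\rangle$ forces $u^-(P)=0$, which is the $n=0$ case of Proposition~\ref{prop_move} and is at the paper's level of rigor). The genuine gap is in $(3)\Rightarrow(1)$, and it comes from a false premise: under Definition~\ref{dfn1}, a splice of type $S^-$ does \emph{not} require a non-nugatory crossing; it only requires that the splice is not a Reidemeister~I reduction, i.e.\ that \emph{neither} of the two loops based at the double point is free of crossings. For instance, the projection with Gauss word $d\,m\,m\,d\,e\,e$ (a kink $d$ carrying a small kink $m$ on its loop, plus a further kink $e$ elsewhere) lies in $\langle\{O\}\rangle$, all of its crossings are nugatory, and yet the splice at $d$ is of type $S^-$. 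So your justification that the first $S^{\pm}$ in the reversed sequence must be $S^+$ does not stand. The conclusion can be rescued, but by a different argument: one must show that an $S^-$ applied to an element of $\langle\{O\}\rangle$ again yields an element of $\langle\{O\}\rangle$ (on the chord diagram, such a splice deletes a chord that crosses no other chord and reverses one arc, which preserves the property that no two chords intersect), whence an initial $S^-$ would leave only one $S^{\pm}$ for the rest of the sequence and give $u(P)\le 1$, contradicting $u(P)=2$.

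The same defect propagates to your exclusion of $S^-$ as the second operation. The dihedral symmetry you invoke acts on the standard $(2,2l-1)$-torus projection, not on an arbitrary element of $\langle\mathcal{T}\rangle$, which may carry nested kinks; and at such a kink crossing (nugatory, with both loops non-empty) an $S^-$ is possible and does \emph{not} land in $\langle\{O\}\rangle$ --- it lands back in $\langle\mathcal{T}\rangle$ (e.g.\ the trefoil projection with a kink on a kink: splicing the outer kink returns a trefoil with one kink). That case is simply not covered by your argument; it still yields a contradiction, but via Proposition~\ref{proposition1} ($P\in\langle\mathcal{T}\rangle$ would give $u(P)=1$), and you must also actually prove, for non-nugatory crossings of a kink-decorated element of $\langle\mathcal{T}\rangle$, that every $S^-$ there lands in $\langle\{O\}\rangle$ --- symmetry of the standard representative alone does not give this. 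Finally, matching the two $S^+$'s with the spots $\alpha,\beta,\gamma$ of Figure~\ref{p2a} requires the same normalization the paper uses implicitly (a Proposition~\ref{prop_move}-type reduction to Move~\ref{lemma2} applied to the standard torus representative). Note that the paper's own proof avoids your reversal analysis entirely: it simply reruns the proof of Theorem~\ref{thm1} with operations of types $S^{\pm}$ and $\ri^{\pm}$ in place of $S^-$ and $\ri^-$.
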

\begin{proof}
By Theorem~\ref{thm1}, $(1)$ $\Leftrightarrow$ $(2)$.  
By the same argument as in Theorem~\ref{thm1}, it is easy to show that (1) $\Leftrightarrow$ (3).  
\end{proof}
\begin{question}\label{q2}
Is there a prime knot knot projection $P$ such that 
$u(P) < u^- (P)$?
\end{question}
\begin{question}\label{q3}
Is there a prime knot knot projection $P$ such that 
$C(K^{alt}(P)) < u^- (P)$?
\end{question}

\section{Acknowledgement}
The authors would like to thank the referee for the comments.  
The authors would like to thank some participants of Topology Seminar at Tokyo Woman's Christian University for useful comments.  
The authors would like to thank Professor Tsuyoshi Kobayashi, Professor~Makoto Ozawa,  Professor Masakazu Teragaito, and Professor~Akira Yasuhara for their comments.   N.~I.~was partially supported by Sumitomo Foundation (Grant for Basic Science Research Projects, Project number: 160556).

\end{document}